\newtheorem{theo}{Theorem}[section]
\newtheorem{prop}[theo]{Proposition}
\newtheorem{lemma}[theo]{Lemma}
\newtheorem{claim}[theo]{Claim}
\newcommand{\eps}{{\varepsilon}}
\newcommand{\PP}{{\mbox{\rm Prob}}}
\newcommand{\var}{ Var  }
\begin{document}
\date{}

\title{
Random necklaces require fewer cuts
%\\Draft, {\small randneck17.tex}
}

\author{Noga Alon
\thanks
{Department of Mathematics, Princeton University, Princeton,
NJ 08544, USA and Schools of Mathematics and Computer Science,
Tel Aviv University, Tel Aviv, Israel.
Email: {\tt nalon@math.princeton.edu}.
Research supported in part by
NSF grant DMS-1855464 and
BSF grant 2018267.
}
\and Dor Elboim
\thanks{Department of Mathematics, Princeton University,
Princeton, NJ 08544, USA. Email:
	{\tt delboim@princeton.edu}.}
\and J\'anos Pach
\thanks{R\'enyi Institute, Budapest and MIPT, Moscow.
Supported by NKFIH grant K-131529, Austrian Science Fund Z 342-N31,
Ministry of Education and Science of the Russian Federation
MegaGrant No. 075-15-2019-1926, ERC Advanced Grant ``GeoScape.'' Email:
{\tt pach@cims.nyu.edu}.}
\and  G\'abor Tardos
\thanks{R\'enyi Institute, Budapest and MIPT, Moscow.
Supported by the ERC Synergy Grant ``Dynasnet'' No. 810115, the ERC Advanced Grant ``GeoScape'', the National Research, Development and Innovation Office (NKFIH) grants K-132696, SNN 135643 and by the Russian Federation
MegaGrant No. 075-15-2019-1926. Email:
{\tt tardos@renyi.hu}.}
}

\maketitle
\begin{abstract}
It is known that any open necklace with beads of $t$ types
in which the number of beads of each type is divisible by $k$, can be
partitioned by at most $(k-1)t$ cuts into intervals that can be
distributed into $k$ collections, each containing the same number
of beads of each type. This is tight for all values of
$k$ and $t$.

Here, we consider the case of random
necklaces, where the number of beads of each type is $km$. Then
the minimum number of cuts required for a ``fair'' partition with the above
property is a random variable $X(k,t,m)$. We prove that for fixed $k,t,$ and
large $m$, this random variable is at least $(k-1)(t+1)/2$ with
high probability. For $k=2$, fixed $t$, and large $m$, we determine the
asymptotic behavior of the probability that $X(2,t,m)=s$ for all values
of $s\le t $. We show that this probability is polynomially small when
$s<(t+1)/2$, it is bounded away from zero when $s>(t+1)/2$, and decays like
$\Theta ( 1/\log m)$ when $s=(t+1)/2$.

We also show that for large $t$,
$X(2,t,1)$ is at most $(0.4+o(1))t$ with high
probability and that for large $t$ and large ratio $k/\log t$,
$X(k,t,1)$ is
$o(kt)$ with high probability.
%The problem of determining the asymptotic behaviour of $X(2,t,1)$ for large $t$ remains open.
\vspace{0.2cm}

\noindent
{\bf AMS Subject classification:} 05D40, 60C05\\    {\bf Keywords:} necklace, random walk, second moment method
\end{abstract}

\section{Introduction}

Let $N$ be an \emph{open} necklace
with the same number of beads belonging to each of $t$ classes,
and suppose that this number is divisible by $k$.
As was proved in \cite{Al}, it is always possible to cut $N$ in at most
$(k-1)t$ points and distribute the resulting intervals
into $k$ collections, each containing the same number
of beads of each type. This is tight for all values of the
parameters, as shown by any necklace in which the beads
of each type appear contiguously. A possible interpretation of the
result is the following. Suppose that $k$ mathematically oriented thieves
want to fairly distribute a necklace among
each other. If the necklace is comprised of the same number of beads belonging to $t$
different types (colors) and this number is a multiple of $k$, then they
can always achieve this by making at most
$(k-1)t$ cuts between beads.
\smallskip

The aim of the present paper is to study this problem for
random necklaces. The random model we consider here is a
necklace of total length $n=ktm$ consisting of exactly $km$ beads
of type $i$ for each $1 \leq i \leq t$, chosen uniformly among all
intervals of $n$ beads as above.  Call a set of cuts
of such a necklace {\em fair}, if it is possible to split the resulting
intervals into $k$ collections, each containing exactly $m$ beads
of each type. For a necklace $N$, let
$X=X(N)$ be the minimum number of cuts in a fair collection. When
$N$ is chosen randomly as above, $X$ is a random variable which
we denote by $X(k,t,m)$. By the result of \cite{Al} mentioned
above, we have $X(k,t,m) \leq (k-1)t$ with probability $1$. Our
objective is to study the typical behavior of the random
variable $X=X(k,t,m)$. All results presented here are asymptotic, where at
least one of the three variables $k,t,m$ tends to infinity. As
usual, we say that
a result holds {\em with high probability} ({\em whp}, for
short), if the probability that it holds tends to $1$, when the relevant
parameter(s) tend to infinity.

The problem of determining the asymptotic behavior of $X(k,t,m)$
turns out to be connected to several seemingly unrelated topics,
including matchings in nearly regular hypergraphs and random walks
in Euclidean spaces. For some values of the parameters
$k,m$ and $t$,
we were able to determine
this behavior quite accurately,
showing that
the typical number of required cuts is sometimes significantly smaller
than the deterministic upper bound $(k-1)t$ which is always valid.

We start with the following observation.
\begin{prop}
\label{p11}
For every fixed $k$ and $t$, as $m$ tends to infinity, we have
$X=X(k,t,m) \geq\left\lceil \frac{(k-1)(t+1)}{2}\right\rceil$ whp.
\end{prop}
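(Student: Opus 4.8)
The plan is a first-moment (union-bound) argument. It suffices to fix an integer $s$ with $s<(k-1)(t+1)/2$, show that $X=X(k,t,m)>s$ whp, and finally take $s=\lceil (k-1)(t+1)/2\rceil-1$ (which is indeed $<(k-1)(t+1)/2$). Call a \emph{configuration} a pair consisting of a set of $s'$ cuts, $0\le s'\le s$, together with a surjective colouring of the resulting $s'+1$ intervals by the $k$ thieves; the configuration is \emph{fair} if every colour class contains exactly $m$ beads of each of the $t$ types. By definition $X\le s$ exactly when some fair configuration exists, so by Markov's inequality it is enough to show that the expected number of fair configurations with at most $s$ cuts tends to $0$ as $m\to\infty$.

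Two estimates do the work. The first is purely combinatorial: a configuration can be fair only if the intervals of each colour $c$ have total length $tm$ (that colour must receive $m$ beads of each of $t$ types). Fixing the colouring, the admissible interval-length vectors are obtained by independently choosing, for each colour $c$, a composition of $tm$ into $n_c\ge 1$ positive parts, where $n_c$ is the number of intervals of colour $c$ and $\sum_c n_c=s'+1$; the cut positions are then determined by the lengths. Hence the number of configurations with exactly $s'$ cuts satisfying this length condition is at most $k^{s'+1}\prod_{c=1}^{k}\binom{tm-1}{n_c-1}\le k^{s'+1}(tm)^{\,s'+1-k}$, which for fixed $k,t$ is $O(m^{\,s'+1-k})$; summing over $s'\le s$ gives $O(m^{\,s+1-k})$.

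The second estimate is a probability computation for a \emph{fixed} configuration all of whose $k$ colour classes occupy exactly $tm$ positions of the necklace. Since a uniformly random necklace is a uniformly random arrangement of $km$ beads of each type among the $n=ktm$ positions, the probability that such a configuration is fair — i.e.\ that every type occurs exactly $m$ times in each of the $k$ regions — equals
\[
\frac{\big((tm)!\,/\,(m!)^{t}\big)^{k}}{(ktm)!\,/\,\big((km)!\big)^{t}}
=\frac{\big((tm)!\big)^{k}\,\big((km)!\big)^{t}}{(m!)^{kt}\,(ktm)!}.
\]
Applying Stirling's formula, the $n\log n-n$ terms cancel and the coefficient of $\log m$ works out to $-\tfrac{(k-1)(t-1)}{2}$, so this probability is $\Theta\big(m^{-(k-1)(t-1)/2}\big)$, uniformly over all such configurations.

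Multiplying the two bounds, the expected number of fair configurations with at most $s$ cuts is $O\big(m^{\,s+1-k}\cdot m^{-(k-1)(t-1)/2}\big)$; the exponent $s+1-k-\tfrac{(k-1)(t-1)}{2}$ is strictly negative precisely because $s<(k-1)(t+1)/2$, so the expectation tends to $0$, and taking $s=\lceil (k-1)(t+1)/2\rceil-1$ yields $X\ge\lceil (k-1)(t+1)/2\rceil$ whp. (The trivial bound $X\ge k-1$, needed only to have $k$ nonempty colour classes, covers the small cases, e.g.\ $t=1$, where the relevant range of $s'$ is empty.) The one genuinely delicate step is the Stirling computation: both the cancellation of the leading terms and the exact value $-\tfrac{(k-1)(t-1)}{2}$ of the $\log m$-coefficient must be verified with care, since this exponent is exactly what makes the threshold fall at $(k-1)(t+1)/2$.
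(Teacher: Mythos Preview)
Your proof is correct and follows essentially the same first-moment argument as the paper: you bound the number of balanced $s$-cut partitions by $O_{k,t}(m^{s+1-k})$ via the composition count (the paper phrases this as ``saving'' the last interval length in each part), compute the probability that a fixed balanced partition is fair as $\Theta_{k,t}(m^{-(k-1)(t-1)/2})$ via the same multinomial identity and Stirling (this is the paper's Claim~2.1(i)), and multiply to obtain an expectation of order $m^{s-(k-1)(t+1)/2}\to0$. The presentation differs only cosmetically.
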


In our main result, we describe the asymptotic behavior of
$X=X(k,t,m)$ for two thieves ($k=2$) and any fixed number of types
$t$, as $m$ tends to infinity.

\begin{theo}\label{thm:big m}
	Let $t$ be a fixed positive integer and $m\to \infty $.
	\begin{enumerate}
		\item
		For all $1\le s < \frac{t+1}{2}$, we have that
		\begin{equation}
		\mathbb P \big( X(2,t,m)=s \big) =
                \Theta \big( m ^{s-\frac{t+1}{2}} \big).
		\end{equation}
		\item
		When $t$ is odd and $s=\frac{t+1}{2}$, we have
		\begin{equation}
		\mathbb P \big( X(2,t,m)=s \big) =
                \Theta \Big( \frac{1}{\log m} \Big).
		\end{equation}
		\item
		For all  $\frac{t+1}{2} <s\le t $, we have that
		\begin{equation}
		\mathbb P \big( X(2,t,m)=s \big)  =\Theta (1).
		\end{equation}
	\end{enumerate}
\end{theo}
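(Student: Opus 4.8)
The plan is to encode the condition $X(2,t,m)\le s$ as an event for a random lattice path and then run a first‑ and second‑moment analysis whose behaviour changes at $s=\frac{t+1}{2}$. Number the beads $1,\dots,n$ with $n=2tm$ and let $f\colon\{0,\dots,n\}\to\mathbb Z^t$ send $j$ to the vector counting beads of each colour among the first $j$ beads; then $f(0)=\mathbf 0$, $f(n)=(2m,\dots,2m)$, and under the random model $f$ is a uniform monotone lattice path between these endpoints. Merging two consecutive intervals given to the same thief removes a cut, so a minimum fair partition uses the alternating sign pattern; a short computation then shows that $X\le s$ iff there are $0\le p_1\le\cdots\le p_s\le n$ with $\sum_{j=1}^s(-1)^{j-1}f(p_j)=(-1)^{s+1}(m,\dots,m)$, equivalently iff one of the two thief sets (a union of about $s/2$ of the $s+1$ intervals) contains exactly $m$ beads of each colour. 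Let $Z_{=s}$ count the tuples $0<p_1<\cdots<p_s<n$ with this property; then $\{X\le s\}=\bigcup_{s'\le s}\{Z_{=s'}\ge1\}$ and $\mathbb P(X=s)=\mathbb P(X\le s)-\mathbb P(X\le s-1)$.

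\smallskip\noindent\emph{First moment.} By exchangeability of the colour sequence, a fixed set of $L$ positions is \emph{balanced} (contains $m$ beads of each colour) with probability $0$ unless $L=tm$, in which case the probability equals $(tm)!^2(2m)!^t/((m!)^{2t}(2tm)!)$, \emph{independently} of the set; Stirling's formula evaluates this to $\Theta(m^{-(t-1)/2})$. The number of $s$-tuples whose thief set has size $tm$ is $\Theta(m^{s-1})$ (one linear constraint among $s+1$ interval lengths that sum to $n$), so $\mathbb E[Z_{=s}]=\Theta(m^{\,s-(t+1)/2})$. A union bound over $s'\le s$ combined with Markov's inequality gives $\mathbb P(X\le s)=O(m^{\,s-(t+1)/2})$, which already supplies the upper bound in part (1).

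\smallskip\noindent\emph{Second moment via an overlap decomposition.} For thief sets $S,S'$ of size $tm$ with $|S\setminus S'|=|S'\setminus S|=b$, exchangeability yields an exact (multinomial) formula for $\mathbb P(S,S'\text{ both balanced})$, and an asymptotic evaluation shows it has order $\Theta\big((m\min(b,tm-b))^{-(t-1)/2}\big)$, interpolating between $\Theta(m^{-(t-1)/2})$ when the two sets almost coincide (or are almost complementary) and $\Theta(m^{-(t-1)})$ when they are macroscopically different. Grouping pairs $(C,C')$ of valid configurations according to the dyadic scale $2^k$ of $\min(b,tm-b)$, one finds that each scale contributes $\Theta\big(\mathbb E[Z_{=s}]\cdot 2^{\,k(s-\frac{t+1}{2})}\big)$, so that
\begin{equation}
\mathbb E[Z_{=s}^2]=\Theta\!\left(m^{\,s-\frac{t+1}{2}}\sum_{k=0}^{\lfloor\log_2(tm)\rfloor}2^{\,k\left(s-\frac{t+1}{2}\right)}\right).
\end{equation}
The sign of $s-\tfrac{t+1}{2}$ now governs the three cases: for $s<\tfrac{t+1}{2}$ the sum is $\Theta(1)$ (finest scale dominates), so $\mathbb E[Z_{=s}^2]=\Theta(\mathbb E[Z_{=s}])$; for $s=\tfrac{t+1}{2}$ every term is $\Theta(1)$, so $\mathbb E[Z_{=s}^2]=\Theta(\log m)$ while $\mathbb E[Z_{=s}]=\Theta(1)$; for $s>\tfrac{t+1}{2}$ the sum is $\Theta\big((tm)^{\,s-(t+1)/2}\big)=\Theta(\mathbb E[Z_{=s}])$ (coarsest scale dominates), so $\mathbb E[Z_{=s}^2]=\Theta\big(\mathbb E[Z_{=s}]^2\big)$.

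\smallskip\noindent\emph{Conclusions and the main obstacles.} In part (1), $\mathbb P(Z_{=s}\ge1)\ge\mathbb E[Z_{=s}]^2/\mathbb E[Z_{=s}^2]=\Theta(\mathbb E[Z_{=s}])$, and since $\mathbb P(X\le s-1)$ is of strictly smaller order we get $\mathbb P(X=s)=\Theta(m^{\,s-(t+1)/2})$. In part (2), the same second‑moment bound gives $\mathbb P(Z_{=s}\ge1)=\Omega(1/\log m)$ and $\mathbb P(X\le s-1)=O(m^{-1})=o(1/\log m)$, yielding the lower bound; for the matching upper bound one writes $\mathbb P(Z_{=s}\ge1)=\mathbb E[Z_{=s}]/\mathbb E[Z_{=s}\mid Z_{=s}\ge1]$ and must prove $\mathbb E[Z_{=s}\mid Z_{=s}\ge1]=\Omega(\log m)$, i.e.\ that a single balanced configuration forces, with positive conditional probability, a \emph{cascade} of $\Omega(\log m)$ further balanced configurations, one per dyadic scale, exploiting the approximate self‑similarity of the bridge $f$ — this cascade estimate is the heart of part (2) and its main obstacle. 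In part (3), the Paley--Zygmund inequality gives $\mathbb P(X\le s)\ge\mathbb P(Z_{=s}\ge1)\ge c>0$; upgrading this to $\mathbb P(X=s)\ge c$ requires separating $\mathbb P(X\le s)$ from $\mathbb P(X\le s-1)$, which is immediate when $s-1\le\tfrac{t+1}{2}$ but, when $s-1>\tfrac{t+1}{2}$, needs a finer input — for instance passing to the $m\to\infty$ limit, where the rescaled path converges to a hyperplane‑valued Brownian bridge for which $\le t$ cuts always suffice (Hobby--Rice) yet, for each $s\le t$, $\le s-1$ cuts fail with positive probability, together with the fact that the relevant events are continuity sets for the limit; establishing this is the main obstacle in part (3).
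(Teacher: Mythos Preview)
Your first– and second–moment framework for parts (1) and the lower bound in (2) is exactly the paper's approach, and your overlap/dyadic computation reproduces the paper's estimates (Claim~2.1 and the bound $n^*(q,t,m,s)=O(m^{s-1}q^{s-2})$). You also correctly locate the two hard points.

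\textbf{Part (2), upper bound.} Your reformulation $\mathbb P(Z\ge1)=\mathbb E[Z]/\mathbb E[Z\mid Z\ge1]$ and the ``cascade'' picture are in the right spirit, but note that $\mathbb E[Z]=\Theta(1)$ and $\mathbb E[Z^2]=\Theta(\log m)$ alone do \emph{not} force $\mathbb E[Z\mid Z\ge1]=\Omega(\log m)$; one needs a pointwise statement. The paper does not argue via dyadic self-similarity but rather via a random-walk return estimate: it fixes a fair configuration $i$, couples the differences $U(i_j+n)-U(i_j)$ for $|n|\le m^{1/4}$ to $s$ independent two-sided random walks in $\mathbb Z^{t-1}$, and proves (Proposition~2.6, a Lawler-type ``last visit'' argument) that the probability these walks never simultaneously return to the constraint set is $O(1/\log m)$. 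One then counts only the ``locally maximal'' fair configurations and gets expectation $O(1/\log m)$. So the key missing ingredient in your sketch is this Dvoretzky--Erd\H os/Lawler last-visit decomposition in $(t-1)$-dimensional walk space, not a multi-scale second moment.

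\textbf{Part (3).} Here your route diverges substantially from the paper. You propose passing to a Brownian-bridge limit and invoking Hobby--Rice plus a positive-probability failure of $s-1$ cuts in the continuum. This would require (i) showing $\{X\le s-1\}$ is a continuity set for the functional CLT, and more seriously (ii) proving that the limiting bridge satisfies $X=s$ with positive probability for each $(t+1)/2<s\le t$ --- which is essentially the same question you are trying to answer, transferred to the continuous model, and a naive dimension count does not settle it (the case $s=t$ already shows Borsuk--Ulam beats dimension counting). The paper avoids this entirely with a concrete biasing trick: it plants $L=\Theta(\sqrt m)$ ``seed'' beads of type $i$ near the boundaries of the $i$-th of $s$ equal blocks, producing a tilted distribution $D'$ with $dD'/dD=O(1)$ on ``normal'' necklaces. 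Under $D'$, any fair partition with fewer than $s$ cuts must leave some block uncut, and a deterministic pigeonhole on the seeds then forces an abnormal deviation (so $X\ge s$ on normal necklaces); meanwhile, a second-moment argument restricted to ``central'' $s$-cut partitions (one cut in the middle third of each block, where seeds are absent and hence split evenly) gives $X\le s$ with probability $\Omega(1)$. This construction simultaneously manufactures the lower bound $X\ge s$ and the upper bound $X\le s$ on an event of positive $D$-probability, which your limit approach does not obviously do.
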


The most surprising aspect of the last result and, in fact, of the present paper is that the distribution of the values of
$ X(2,t,m)$ is not concentrated: they are spread over the interval $(\frac{t+1}2, t]$, each value being assumed with positive probability.

We also consider the case $m=1$, in which every thief should get a
single bead of each type.
\begin{theo}
\label{t14}
For $t$ and $k/ \log t$ tending to infinity,
the random variable $X=X(k,t,1)$ is
$o(kt)$ whp.
\end{theo}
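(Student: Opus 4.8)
The plan is to restate the problem combinatorially and then reduce it to a near-perfect matching statement for a nearly regular hypergraph, which is the ``matchings in nearly regular hypergraphs'' connection mentioned above. First, note that a fair set of $c$ cuts is exactly the same data as a partition of the necklace into $c+1$ \emph{rainbow intervals} (intervals containing no repeated color) together with a grouping of these intervals into $k$ classes, each class being a ``page'' that contains precisely one bead of each of the $t$ colors. It therefore suffices to produce, whp, such a decomposition using $o(kt)$ intervals.

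Fix a slowly growing parameter $b=b(t)\to\infty$ with $b=o(\sqrt t)$ (for concreteness, $b=\log t$). Cut the necklace into $\lceil n/b\rceil$ consecutive blocks of length $b$, and then further subdivide each non-rainbow block into rainbow pieces. Since the expected number of repeated color-pairs inside a length-$b$ block is $O(b^2/t)=o(1)$, whp the total number of extra cuts incurred by these subdivisions is $O(nb/t)=o(n)$, so altogether we obtain $(1+o(1))n/b=o(kt)$ rainbow pieces. Regarding the $t$ colors as vertices and these pieces as hyperedges, we get a hypergraph $\mathcal B$ on $t$ vertices which is $k$-regular (each color lies in exactly $k$ pieces, since $m=1$ and the pieces are rainbow), has all edge sizes at most $b$, and, whp, has all codegrees $O(kb/t)=o(k)$ (any two colors co-occur in $o(k)$ pieces); the codegree bound uses a standard concentration-plus-union-bound argument over the $\binom t2$ pairs, valid because $k=\omega(\log t)$.

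The technical heart is to show that, whp, $\mathcal B$ contains pairwise edge-disjoint matchings $M_1,\dots,M_{k-\ell}$, with $\ell=\Theta(\log t)$ (say $\ell=C\log t$ for a large constant $C$), such that each $M_j$ covers all but $o(t)$ of the colors. One peels these matchings off one at a time using the semi-random (R\"odl nibble) method, after each step re-verifying that the residual hypergraph remains nearly regular with small codegrees (this is where the Pippenger--Spencer/Frankl--R\"odl/Kahn machinery enters). The relevant concentration estimates have failure probability roughly $t^{O(1)}e^{-\Omega(D)}$ in terms of the current degree $D$, so they remain useful as long as $D\gg\log t$; this is exactly why one can extract $k-\Theta(\log t)$ matchings but not more, and why the hypothesis $k=\omega(\log t)$ (rather than merely $k\to\infty$) is needed --- it guarantees that the $\Theta(\log t)$ unprocessed ``layers'' contribute only $\Theta(t\log t)=o(kt)$ beads. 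I expect this peeling step to be the main obstacle, precisely because of the need to control the residual hypergraph down to degree $\Theta(\log t)$.

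Finally, assemble the fair partition. Let page $j$, for $1\le j\le k-\ell$, consist of the pieces in $M_j$, and let pages $k-\ell+1,\dots,k$ start out empty. The pieces used in no $M_j$, together with the colors missed by the various $M_j$, form a ``leftover pool'' of size $\sum_{j}\big|[t]\setminus V(M_j)\big|+\ell t=(k-\ell)\cdot o(t)+\ell t=o(kt)$ beads; cut every leftover piece into single beads. For each color $a$, the number of leftover beads of color $a$ equals the number of pages not yet containing $a$ (both equal $k$ minus the number of $M_j$ covering $a$), so we may distribute the leftover beads bijectively, one for each missing color, and complete every page into a full collection. The resulting partition uses $(1+o(1))n/b$ pieces plus $o(kt)$ singleton beads, hence $o(kt)$ cuts, and every step holds whp. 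This gives $X(k,t,1)=o(kt)$ whp.
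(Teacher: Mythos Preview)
Your reduction to rainbow intervals and the hypergraph-matching framework is correct and is exactly the connection the paper exploits. However, your execution diverges from the paper's in two ways, and the second creates a genuine gap.

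First, the paper takes the block length $C$ to be a \emph{fixed} large constant, not a growing function of $t$. It keeps only the good (rainbow) blocks as edges, obtaining a $C$-\emph{uniform} hypergraph with maximum degree at most $k$ and, whp, codegree at most $\delta k$. Then the black-box Pippenger--Spencer edge-coloring theorem applies directly: the entire edge set is partitioned into at most $(1+\varepsilon)k$ matchings. One assigns the $k$ largest of these matchings to the $k$ thieves; the remaining $\varepsilon k$ matchings and the bad blocks together contain at most a $2\varepsilon$-fraction of the beads, which are cut loose individually. The total number of cuts is at most $n/C+2\varepsilon n$, and letting $\varepsilon\to0$ and $C\to\infty$ gives $o(n)$. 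Your choice $b=\log t$ produces a hypergraph with edge sizes varying up to $b$ and $b\to\infty$; the standard Pippenger--Spencer statement is for fixed uniformity, with constants depending on $C$, so it does not apply as written.

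Second, and more importantly, your ``technical heart'' --- peeling off $k-\Theta(\log t)$ near-perfect matchings one at a time while re-verifying near-regularity of the residual --- is not a consequence of the standard nibble theorems and is exactly the step you yourself flag as the main obstacle. After removing one near-perfect matching the uncovered vertices have degree one higher than the rest; keeping this accumulated spread small relative to the residual degree over nearly $k$ iterations, all the way down to degree $\Theta(\log t)$, is delicate and is not something Frankl--R\"odl or Pippenger--Spencer hand you. The paper sidesteps this entirely: the edge-\emph{coloring} version of Pippenger--Spencer produces all the matchings simultaneously, as a partition of the edge set, so no iterative control is needed and no layers are left unprocessed. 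Switching to a fixed block length and invoking the edge-coloring theorem directly closes the gap; the peeling route is unnecessary here.
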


For $k=2$, $m=1$ and large $t$, we prove
\begin{theo}
\label{t15}
The random variable
$X=X(2,t,1)$ is at least $2H^{-1}(1/2)t-o(t)=0.220...t -o(t)$ whp, where
$H^{-1}(x)$ is the inverse of the binary entropy
function $H(x)=-x \log_2 x -(1-x) \log_2 (1-x)$ taking values
in the interval $[0,1/2]$.

On the other hand, $X\le 0.4 t+o(t)$ holds whp.
\end{theo}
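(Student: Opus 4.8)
The plan is to treat both bounds through a single reformulation. Index the beads by $[2t]=\{1,\dots,2t\}$; for type $i$ let $a_i<b_i$ be the positions of its two beads, so that for a uniformly random necklace the pairs $\{a_i,b_i\}$ form a uniformly random perfect matching $M$ on $[2t]$. For $z\in\{0,1\}^{2t}$ put $\chi(z)=|\{1\le j\le 2t-1:\ z_j\ne z_{j+1}\}|$, and call $z$ \emph{separating} if $z_{a_i}\ne z_{b_i}$ for all $i$. Cutting a separating $z$ at its change points and handing each resulting run to the thief named by its colour gives a fair partition, and conversely any fair partition induces a separating colouring with no more changes; hence
\[
X(2,t,1)=\min\{\chi(z):\ z\in\{0,1\}^{2t}\ \text{separating}\}.
\]

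For the lower bound I would union-bound $\mathbb P(X\le s)$ over colourings. A separating colouring is automatically balanced ($t$ zeros and $t$ ones), and for a fixed balanced $z$ the probability that it separates the random necklace is $t!^{\,2}\,2^t/(2t)!=2^t/\binom{2t}{t}=2^{-(1+o(1))t}$. The number of colourings with at most $s+1$ runs is $\mathrm{poly}(t)\cdot\binom{2t-1}{s}$, which for $s=\beta t$ is $2^{(2H(\beta/2)+o(1))t}$. Therefore $\mathbb P(X\le\beta t)\le 2^{(2H(\beta/2)-1+o(1))t}\to0$ whenever $H(\beta/2)<1/2$, i.e.\ $\beta<2H^{-1}(1/2)$, which yields $X\ge 2H^{-1}(1/2)\,t-o(t)$ whp.

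For the upper bound I would first use the explicit separating colouring $z^0_p=\mathbf 1[\text{the $M$-partner of }p\text{ is }<p]$ (colour a bead according to whether it is the first or the second occurrence of its type). Computing the probabilities of the relevant two-bead events for positions $j,j+1$ shows $\mathbb E[\chi(z^0)]=(2t+1)/3$, and $\chi(z^0)$ concentrates (bounded differences under the random matching), so already $X\le(2/3+o(1))t$ whp. To improve the constant, observe that XOR-ing $z^0$ with any colouring of the \emph{types} (a vector constant on each $M$-pair) stays separating, and that the number of changes of the new colouring equals the number of unsatisfied constraints of a correlation-clustering instance on the multigraph $\Gamma$ obtained from the path on $[2t]$ (edge set $\{\{j,j+1\}\}$) by identifying $a_i$ with $b_i$ for each $i$: a near-$4$-regular graph on the $t$ types, each edge carrying a label ``$=$'' or ``$\ne$'' according to whether the two consecutive beads producing it share their occurrence status. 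I would then analyse a greedy colouring of $\Gamma$ (process the types in order of first occurrence, each time taking the bit that satisfies a majority of the already-decided incident edges), use the law of $\Gamma$ inherited from the random necklace to bound the number of unsatisfied edges by $0.4\,t+o(t)$ in expectation, and upgrade to whp by a bounded-differences argument on $M$.

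The crux is the upper-bound constant. The first/second-occurrence colouring gives only $2/3$, and a uniform random XOR is actually worse, since the instance on $\Gamma$ is strongly biased toward ``$=$'' edges; getting down to $0.4$ forces one to exploit the fine local structure of a random necklace — the joint law of the occurrence labels of adjacent beads and the resulting edge-label statistics of $\Gamma$ — together with a carefully tuned (possibly multi-pass) local colouring rule. Closing the remaining gap between $2H^{-1}(1/2)\approx 0.220$ and $0.4$ would require the true optimum of this random correlation-clustering problem, which I do not expect to be pinned down here.
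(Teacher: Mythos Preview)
Your lower bound is correct and is exactly the paper's first-moment argument: count balanced colourings with at most $s$ runs, multiply by the probability $t!^2 2^t/(2t)!=\Theta(\sqrt t/2^t)$ that a fixed balanced colouring separates the random matching, and observe that the product is $o(1)$ once $H(s/(2t))<1/2$.

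The upper bound, however, has a genuine gap. Your reformulation via separating $\{0,1\}$-colourings and the XOR-by-type-colouring reduction to a correlation-clustering instance on the quotient multigraph $\Gamma$ are both sound, and the $2t/3$ bound from the first/second-occurrence colouring $z^0$ is a real (and correct) warm-up. But the step that matters --- ``analyse a greedy colouring of $\Gamma$ \dots\ to bound the number of unsatisfied edges by $0.4t+o(t)$'' --- is asserted, not proved. You give no mechanism linking the local statistics of $\Gamma$ to the specific constant $0.4$, and you yourself flag that ``getting down to $0.4$ forces one to exploit the fine local structure \dots\ together with a carefully tuned (possibly multi-pass) local colouring rule.'' That is precisely the missing idea: nothing in the proposal explains why greedy (or any particular local rule) on this near-$4$-regular labelled multigraph should land at $2/5$ rather than some other constant between $0$ and $2/3$.

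The paper's route is entirely different and does not go through $\Gamma$ at all. It sets $f(t)=\mathbb E[X(2,t,1)]$ and proves the one-step recursion
\[
f(t+1)\le f(t)+\tfrac12-\tfrac12\,\frac{f(t)}{2t+1}+\frac{1}{2t+2}
\]
by exposing the necklace with $t+1$ types in stages: first the last bead (say of type $t+1$), then the sub-necklace on types $1,\dots,t$ together with an optimal fair cut-set of size $f$, and finally the random location of the second type-$(t+1)$ bead. With probability $f/(2t+1)$ that bead lands on an existing cut; with probability at least $\tfrac12-O(1/t)$ it lands in an interval owned by the thief not holding the last bead; only otherwise is one extra cut needed. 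The recursion has fixed point $x=1/2-x/4$, i.e.\ $x=0.4$, and a short argument gives $\limsup f(t)/t\le 0.4$. Concentration (hence the whp statement) then follows from Azuma--Hoeffding, since swapping two beads changes $X$ by $O(1)$. So the constant $0.4$ is not an artefact of a clever local colouring on $\Gamma$; it drops out of a coupling/recursion on $t$. If you want to rescue your approach, you would need to actually carry out an analysis of some explicit rule on $\Gamma$ and show it yields $0.4$ --- at present that is the hole.
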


The upper bound above was obtained jointly with Ryan Alweiss,
Colin Defant and Noah Kravitz.
We show that both the upper and the lower bounds can be
slightly improved, but the arguments leave a considerable gap
between the two estimates. We further prove that
the probability that $X(2,t,1)$ deviates from its expectation by
at least $C \sqrt t$ is at most $O(e^{-\Omega(C^2)})$.
\smallskip

The rest of this paper is organized as follows. In the next section,
we analyze the case where $k$ and $t$ are fixed. We start with the
simple proof of Proposition~\ref{p11}. Since the proof of
Theorem~\ref{thm:big m} is rather long and technical,
in Subsection~\ref{ss2.1}, we settle the special case $t=3$,
which requires similar ideas but is considerably simpler.
The argument, in its full generality,
encompasses the following three subsections.

In Section~\ref{section3}, we study the case where each thief
gets one bead of each type and the number of types, $t$, tends
to infinity and present the proofs of Theorems~\ref{t14}
and~\ref{t15}. We conclude our paper with
several remarks and open problems.

To simplify the presentation, throughout the
paper, we ignore all floor and ceiling signs, whenever they are not
crucial.

\smallskip

\section{Fixed $k$ and $t$: Proof of Theorem~\ref{thm:big m}}

Let $N$ be a random open necklace with $km$ beads in each of the $t$
types. Let $P$ be a {\em partition of the positions}
of the beads into $k$ parts.
%(it is not a random variable).
We call
$P$ \emph{balanced} if each part contains $tm$ beads and we call it
\emph{fair} if each part contains exactly $m$ beads of each type. Thus,
only balanced partitions can be fair. Let $k=2$ and consider two balanced
partitions $P_1$ and $P_2$. For a part $H_1$ of $P_1$, we can choose a
part $H_2$ of $P_2$ with $q:=|H_1\setminus H_2|\le tm/2$. We call $q$
the \emph{distance} of the balanced partitions $P_1$ and $P_2$. (Note
that the distance is symmetric and does not depend on the choice of the
part $H_1$ of $P_1$.)

We will use the first and second moment methods. Therefore, we need to
estimate the probability that a given partition is fair and that two given
partitions are both fair. The asymptotic notation in this section refers
to behavior as $m$ goes to infinity, while parameters $k$, $t,$ and $s$
(where appropriate) remain fixed.

\begin{claim}\label{12}
\begin{itemize}
\item[(i)] The probability $P(k,t,m)$ that a fixed balanced partition is fair is
$$(1+o(1))\frac{t^{(k-1)/2}k^{(t-1)/2}}{(2\pi m)^{(k-1)(t-1)/2}}=\Theta_{k,t}(m^{-(k-1)(t-1)/2}).$$
\item[(ii)] If $k=2$ and the distinct balanced partitions $P_1$ and $P_2$ have distance $q$, then we have
$$\mathbb P(P_1, P_2\mbox{ are fair})=\Theta_t((qm)^{-(t-1)/2}).$$
\end{itemize}
\end{claim}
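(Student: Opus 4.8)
The plan is to handle the two parts separately: an exact enumeration followed by Stirling's formula for (i), and a conditioning step that reduces (ii) to estimating a ``collision probability'' via a local central limit theorem.

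For (i), first write the probability exactly. A fixed balanced partition $P=(H_1,\dots,H_k)$ is fair exactly when each of its $k$ parts gets $m$ beads of each type; there are $(tm)!/(m!)^t$ colourings of the $tm$ positions of one part doing this, and $(ktm)!/((km)!)^t$ colourings of the whole necklace, so
\[
P(k,t,m)=\frac{\big((tm)!\big)^{k}\big((km)!\big)^{t}}{(m!)^{kt}(ktm)!}.
\]
Now insert $n!=(1+o(1))\sqrt{2\pi n}\,(n/e)^{n}$. The factors $(n/e)^n$ cancel identically (this cancellation is precisely the assertion that a \emph{balanced} partition has parts of the right sizes, so that no exponentially small factor appears), and what remains is the ratio of the $\sqrt{2\pi n}$ terms; collecting the resulting powers of $m$, $t$, $k$ and $2\pi$ gives $(1+o(1))\,t^{(k-1)/2}k^{(t-1)/2}(2\pi m)^{-(k-1)(t-1)/2}$, which is $\Theta_{k,t}(m^{-(k-1)(t-1)/2})$. (One could instead read $P(k,t,m)$ as the value at the mean of a $(k-1)(t-1)$-dimensional multivariate hypergeometric density and apply a local limit theorem, but the direct computation also delivers the constant.)

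For (ii), take $k=2$, write $P_1=(H_1,\overline{H_1})$ and $P_2=(H_2,\overline{H_2})$, and, choosing which part of $P_2$ we call $H_2$, assume $q=|H_1\cap\overline{H_2}|\le tm/2$; then $|\overline{H_1}\cap H_2|=|H_2|-|H_1\cap H_2|=q$ as well. Factor $\mathbb P(P_1,P_2\text{ fair})=\mathbb P(P_1\text{ fair})\cdot\mathbb P(P_2\text{ fair}\mid P_1\text{ fair})$, the first factor being $\Theta_t(m^{-(t-1)/2})$ by (i). Condition on $P_1$ being fair: then the colourings of $H_1$ and of $\overline{H_1}$ are independent, each uniform among arrangements with $m$ beads of every type. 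Since $H_1=(H_1\cap H_2)\sqcup(H_1\cap\overline{H_2})$ and $H_2=(H_1\cap H_2)\sqcup(\overline{H_1}\cap H_2)$, comparing type-count vectors shows that $P_2$ is fair if and only if the type-count vector of the fixed $q$-set $H_1\cap\overline{H_2}$ equals that of the fixed $q$-set $\overline{H_1}\cap H_2$. These two vectors are independent, and each has the distribution of $X$, the multivariate hypergeometric vector obtained by drawing $q$ beads without replacement from a multiset of $tm$ beads containing $m$ of each of the $t$ types. Therefore
\[
\mathbb P(P_2\text{ fair}\mid P_1\text{ fair})=\sum_{v}\mathbb P(X=v)^2 .
\]

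It then remains to prove that $\sum_{v}\mathbb P(X=v)^2=\Theta_t(q^{-(t-1)/2})$ uniformly for $1\le q\le tm/2$, since multiplying by the first factor gives $\mathbb P(P_1,P_2\text{ fair})=\Theta_t((qm)^{-(t-1)/2})$. The vector $X$ is supported on a $(t-1)$-dimensional sublattice (its coordinates sum to $q$), has mean $(q/t,\dots,q/t)$, and, because $q\le tm/2$, a nondegenerate covariance matrix whose entries are all of order $q$. A (multivariate) local central limit theorem for $X$ then yields $\mathbb P(X=v)=O_t(q^{-(t-1)/2})$ for every $v$, whence $\sum_v\mathbb P(X=v)^2\le\max_v\mathbb P(X=v)\le O_t(q^{-(t-1)/2})$; and it gives $\mathbb P(X=v)\ge c_t q^{-(t-1)/2}$ for each of $\Theta_t(q^{(t-1)/2})$ lattice points near the mean, whence $\sum_v\mathbb P(X=v)^2\ge\Omega_t(q^{-(t-1)/2})$. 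For $q$ bounded this covers only finitely many cases, and there $X$ converges as $m\to\infty$ to a fixed multinomial, so the sum of squares is $\Theta_t(1)=\Theta_t(q^{-(t-1)/2})$. The main obstacle is exactly this last step: obtaining the local limit bounds for $X$ with constants uniform over the whole range $1\le q\le tm/2$ (from bounded $q$ up to $q=\Theta(m)$) and checking non-degeneracy of the covariance near the endpoint $q=tm/2$. For $q=o(m)$ one can compare $X$ with $\mathrm{Mult}(q;1/t,\dots,1/t)$ and invoke the classical local CLT, while $q=\Theta(m)$ needs a genuine hypergeometric local limit theorem; alternatively, all of (ii) can be carried out by a direct but longer expansion into products of multinomial coefficients followed by Stirling, in parallel with (i).
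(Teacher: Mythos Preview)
Your proof is correct and follows the same architecture as the paper's: exact enumeration plus Stirling for (i), and for (ii) the factorisation $\mathbb P(P_1\text{ fair})\cdot\mathbb P(P_2\text{ fair}\mid P_1\text{ fair})$ followed by identifying the conditional factor as the collision probability of two independent multivariate hypergeometric samples of size $q$.

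The only point of departure is how you estimate the collision probability $\sum_v\mathbb P(X=v)^2$. You invoke a local CLT and then (rightly) worry about uniformity across the full range $1\le q\le tm/2$. The paper avoids this by arguing more elementarily: for the upper bound it simply observes that the exact pointwise mass $P_{q_1,\dots,q_t}=\binom{tm}{q}^{-1}\prod_i\binom{m}{q_i}$ is $O_t(q^{-(t-1)/2})$ for every admissible vector (a direct Stirling estimate, uniform in $q$ since $q\le tm/2$); for the lower bound it uses only that each marginal $q_i$ has mean $q/t$ and variance $O_t(q)$, so by Chebyshev all $|q_i-q/t|=O_t(\sqrt q)$ with constant probability, and the $O_t(q^{(t-1)/2})$ vectors in this box then force $\sum_v\mathbb P(X=v)^2=\Omega_t(q^{-(t-1)/2})$. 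This sidesteps the uniformity issue entirely and is closer to the ``direct but longer expansion'' you mention as an alternative at the end.
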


\noindent{\bf Proof:}\, First, we show part (i). The total number of necklaces is
$(ktm)!/((mk)!)^t$. For any fixed balanced partition, the number of necklaces making it fair is $((mt)! )^k/ (m!)^{tk}$. Therefore, we have
$$
P(k,t,m)=\frac{((mt)! )^k/ (m!)^{tk}}{(ktm)!/((mk)!)^t}=(1+o(1))\frac{t^{(k-1)/2}k^{(t-1)/2}}{(2\pi m)^{(k-1)(t-1)/2}}
$$
The claimed estimate follows by Stirling's formula.
\smallskip

For (ii), choose parts $H_1$ of $P_1$ and $H_2$ of $P_2$ with
$|H_1\setminus H_2|=q$. Assuming $P_1$ is fair, then $P_2$ is also fair
if and only if in each color, $H_1\setminus H_2$ and $H_2\setminus H_1$
contain the same number of beads. Note that the distributions of the
colors in these two intervals are independent so-called {\em multivariate
hypergeometric distributions}. In this case, we choose a uniform random
$q$-set from a base set containing $m$ beads from each of the $t$
types. The conditional probability $\mathbb P(H_2\mbox{ is fair}\mid H_1\mbox{
is fair})$ is, therefore, the same as the probability that two independent
variables from the same multivariate hypergeometric distributions
agree. This is easily seen to be $\Theta_t(q^{(1-t)/2})$. For
completeness, we sketch the proof of this statement below.

The random sample can be characterized by the numbers
$q_i$ of beads of type $i$, for every $i$. We must
have $q_i\ge0$, $\sum_{i=1}^tq_i=q$. If this holds, then the probability of this specific outcome is
$$P_{q_1,\dots,q_t}=\frac{(mt-q)!q!/
\prod_{i=1}^t((m-q_i)!q_i!)}{(mt)!/(m!)^t}.$$
To prove the upper bound $O_t(q^{(1-t)/2})$ for the probability
that two independent samples agree, it is enough to notice that
$P_{q_1,\dots,q_t}=O_t(q^{(1-t)/2})$ holds for any sequence $(q_1,\dots,q_t)$.

For the lower bound, note that each $q_i$ has a hypergeometric
distribution with mean $q/t$ and variance $O_t(q)$, so with a
constant probability the numbers $q_i$ will simultaneously satisfy
$|q_i-q/t|=O_t(\sqrt q)$ for every $i$. There are $O_t(q^{(t-1)/2})$ such
integer vectors also satisfying $\sum_{i=1}^tq_i=q$, so the collision
probability is $\Omega_t(q^{(1-t)/2})$. This proves the bound and the
claim.\hfill$\Box$
\medskip

The proof of Proposition~\ref{p11} is a simple first
moment argument based on part (i) of the previous claim.
\medskip

\noindent
{\bf Proof of Proposition \ref{p11}:}\, Let $N$ be an open necklace
consisting of $km$ beads of each of the $t$ types.

We estimate the number $n(s,k,m,t)$ of balanced partitions that can be
obtained by $s$ cuts of the open necklace. Note that $s$ cuts result in
$s+1$ intervals, and the partition can be reconstructed from the ordered
list of the lengths of theses intervals together with the information which part of
the partition contains which interval. In fact, we can save by not
specifying the length of the last interval in each part, because it can
be computed from the lengths of the other intervals in the same part, as
the partition must be balanced. The intervals have lengths at most $tm$,
so we have $$n(s,k,m,t)\le k^{s+1}(tm)^{s+1-k}=O_{s,k,t}(m^{s-k+1}).$$

From Claim~\ref{12}(i),
the expected number of fair partitions obtained by $s$ cuts is
$$
n(s,k,m,t) P(k,t,m)=O_{k,s,t}( m^{s-k+1-(t-1)(k-1)/2})
=O_{k,s,t}( m^{s-(t+1)(k-1)/2}).
$$
For any fixed $k$ and $t$, if $s< (t+1)(k-1)/2$, then the above
expectation tends to $0$ as $m$ tends to infinity. This implies the
assertion of Proposition~\ref{p11}. \hfill $\Box$
\medskip

\subsection{Proof of Theorem~\ref{thm:big m} for $t=3$}\label{ss2.1}

The proof of Theorem~\ref{thm:big m} in its full generality is rather lengthy and technical.
In  the present subsection, we settle the special case $t=3$. For this special case, the argument is much simpler. However, it is based on some of the same ideas as the general proof.

One cut is sufficient to fairly distribute the random necklace $N$
(that is, $X(N)=1$) if and only if the partition of $N$ into its
first and second halves is fair, which has probability $\Theta(1/m)$
by Claim~\ref{12}(i). According to the (deterministic)
result of Alon~\cite{Al}, for every $N$, we have $X(N) \leq 3$. Thus, it remains to show that
the probability that $X(N)\le2$ is $\Theta(1/\log m)$.  In order to
estimate this probability, note that two cuts suffice if and only if
there is a balanced partition of $N$ into two cyclic intervals that is
fair. There are exactly $3m$ balanced partitions into two cyclic
intervals. For $0\le i<3m$, we denote by $P_i$ the balanced partition
into an interval starting at position $i+1$ and ending at position $i+3m$,
and its complement.

Let $Y=Y(N)$ denote the random variable counting the number of fair
partitions into cyclic intervals. Clearly, $X(N) \leq 2$ if and only if $Y$
is positive.  We first establish a lower bound for the probability of this
event using the second moment method. The random variable $Y$ is a sum
of $3m$ random variables $Y_i$, where $Y_i$ is the indicator variable for
$P_i$ being fair. Each of these indicator random variables has expectation
$P(2,3,m)=(1+o(1))\frac{\sqrt3}{\pi m}$ by Claim~\ref{12}(i), so
$$
\mathbb E(Y)= \sum_{i=0}^{3m-1}\mathbb E(Y_i)=3m \cdot (1+o(1)) \frac{\sqrt 3}{\pi m}=
(1+o(1)) \frac{3\sqrt 3}{\pi}=\Theta(1).
$$
The expected value of $Y^2$  is
$$
\sum_{0 \leq i, j \leq 3m-1}
\mathbb E(Y_iY_j),
$$
where the sum is taken over ordered pairs. For $i=j$, we have
$\mathbb E(Y_i^2)=\mathbb E(Y_i)=P(2,3,m)=\Theta(1/m)$ by Claim~\ref{12}(i). If $i\ne
j$ we have $\mathbb E(Y_iY_j)=\Theta(1/(mq))$ by Claim~\ref{12}(ii), where
$q$ is the distance between the corresponding partitions. Note that
$q=\min(|i-j|,3m-|i-j|)$. Therefore, for all $1\le q<3m/2$, we have $6m$
pairs $(i,j)$ with $q$ being the distance between $P_i$ and $P_j$. If
$m$ is even, we also have the special case $q=3m/2$ with half as many
terms. We can ignore this special case, as we are only interested in
the order of magnitude. Therefore,
$$
\mathbb E(Y^2) =3m\cdot\Theta\left(\frac1m\right)+ 6m \sum_{1 \leq q \leq 3m/2}
\Theta\left(\frac{1}{mq}\right)=\Theta (\log m).
$$
By the Paley-Zygmund Inequality \cite{PZ}, \cite{PZ1} or, equivalently,
by the Chung-Erd\H{o}s Inequality \cite{CE},
$$
\mathbb P(Y>0) \geq \frac{\mathbb E(Y)^2}{\mathbb E(Y^2)}=\Theta\left(\frac{1}{\log
m}\right).
$$

We next prove an upper bound for the probability that $Y$ is positive.
To this end, we define another random variable $Z=Z(N)$. We will show
that $Z$ is positive with probability $O(1/\log m)$, and the probability
that $Y$ is positive but $Z$ is not, is even lower. The crucial step in
bounding the probability that $Z$ is positive, is the analysis of the
probability that an appropriate two-dimensional random walk does not
return to the origin in a certain number of steps. For this, we apply
a slightly modified version of a classical argument of Dvoretzky and
Erd\H{o}s (\cite{DE}, see also \cite{ET}, \cite{Re}). This is the subject
of Claim~\ref{p2} below.

Let $Z=Z(N)$ denote the number of fair partitions into two cyclic
intervals such that if we shift the parts by at most, say,
$s=\left\lceil \sqrt m\,\right\rceil$ positions to the right, then the
resulting partition is no longer fair. Note, first, that if $Y$ is positive
and $Z$ is zero, then in every set of $s$ consecutive balanced partitions
into two cyclic intervals, there is at least one fair partition. However,
in this case, we have $Y\ge3m/s>\sqrt m$ and, as the expectation of  $Y$
is $O(1)$, the probability of this event is $O(1/\sqrt m)$.

Next, we need to bound the probability that $Z$ is positive. For this,
we use a first moment (union bound) argument. The variable $Z$ is the
sum of $3m$ indicator variables and, by symmetry, these variables have
the same expected value. Therefore, we have
\begin{eqnarray*}
\mathbb P(Z>0)&\le&\mathbb E (Z)\\
&=&3m\cdot \mathbb P(P_0\mbox{ is fair, but no $P_i$ is fair for }1\le i\le s)\\
&=&3m\PP_1\PP_2,
\end{eqnarray*}
where $\PP_1=\mathbb P(P_0\mbox{ is fair})$ and $\PP_2=\mathbb P(\mbox{no $P_i$ is fair for }1\le i\le s\mid P_0\mbox{ is fair})$. We have $\PP_1=\Theta(1/m)$, by Claim~\ref{12}(i),\ and $\PP_2=O(1/\log m)$, by Claim~\ref{p2} below. All this yields
$$\mathbb P(Z>0)=3m\cdot\Theta(1/m)\cdot O(1/\log m)=O(1/\log m).$$

Combining this bound with our earlier estimate for the probability that $Y>0$ and $Z=0$, implies that
$$\mathbb P(Y>0)\le O(1/\log m)+O(1/\sqrt m)=O(1/\log m).$$
This completes the proof of the theorem
for $t=3$,
modulo Claim~\ref{p2}, which we still have to establish.\hfill$\Box$
\medskip

\begin{claim}\label{p2}
The conditional probability $\PP_2$ defined above satisfies $\PP_2=O(1/\log m)$.
\end{claim}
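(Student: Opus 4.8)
The plan is to encode the event that none of $P_1,\dots,P_s$ is fair as a two-dimensional random walk avoiding the origin, and then to bound this probability by a version of the Dvoretzky--Erd\H{o}s non-return argument \cite{DE}, adapted to the fact that the steps of the walk we get are not independent. Condition throughout on $P_0$ being fair; then the colours of positions $1,\dots,3m$ form a uniform arrangement of $m$ beads of each type, independent of those of positions $3m+1,\dots,6m$, which form another such arrangement. For $0\le i\le s$ let $V_i\in L:=\{x\in\mathbb Z^3: x_1+x_2+x_3=0\}$ have $j$-th coordinate the number of type-$j$ beads in $\{i+1,\dots,i+3m\}$ minus $m$ (no wrap-around occurs, since $s\ll 3m$). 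Then $V_0=0$, $P_i$ is fair exactly when $V_i=0$, so $\PP_2=\mathbb P(V_i\neq 0\text{ for }1\le i\le s)$; and passing from $P_{i-1}$ to $P_i$ deletes position $i$ and inserts position $3m+i$, so $V_i-V_{i-1}=e_{d_i}-e_{c_i}$, where $c_i,d_i$ are the colours of positions $i$ and $3m+i$. For $i\le s$ these lie in the two independent halves, so $(c_i)_{i\ge1}$ and $(d_i)_{i\ge1}$ are independent, each a prefix of a uniform arrangement of $m$ beads of each of three types.

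Next I would shorten the window: put $s':=\lfloor m^{1/3}\rfloor\le s$, so that $\{V_i\neq 0\ \forall\,1\le i\le s\}\subseteq\{V_i\neq 0\ \forall\,1\le i\le s'\}$ and it suffices to bound the latter. Let $(\hat c_i)$, $(\hat d_i)$ be two independent i.i.d.\ sequences uniform on $\{1,2,3\}$ and put $\hat V_i:=\sum_{l=1}^i(e_{\hat d_l}-e_{\hat c_l})$, a genuine random walk on $L$. After fewer than $s'$ draws every colour still has at least $m-s'$ of its beads left among at least $3m-s'$, so the per-draw total-variation distance between a without-replacement draw and a uniform draw is $O(s'/m)$; hence a step-by-step coupling gives $\mathbb P\big(\exists\, i\le s':(c_i,d_i)\neq(\hat c_i,\hat d_i)\big)=O(s'^2/m)=O(m^{-1/3})$, and on the complementary event $V_i=\hat V_i$ for all $i\le s'$. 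Therefore
\[
\PP_2\le\mathbb P(V_i\neq 0\ \forall\,1\le i\le s')\le\mathbb P(\hat V_i\neq 0\ \forall\,1\le i\le s')+O(m^{-1/3}).
\]

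It remains to bound the non-return probability of $\hat V$. The increment $\xi:=e_{\hat d_1}-e_{\hat c_1}$ has mean $0$ and, restricted to $L$, covariance $\tfrac23$ times the identity; its support $\{0\}\cup\{\pm(e_a-e_b):1\le a<b\le 3\}$ contains $0$ and generates the rank-$2$ lattice $L$, so $\hat V$ is aperiodic and irreducible on $L$, and the local central limit theorem (cf.\ \cite{ET},\cite{Re}) gives $u_i:=\mathbb P(\hat V_i=0)=\Theta(1/i)$; in particular $\sum_{i=1}^n u_i\ge c\log n$ for all large $n$, with $c>0$ a constant. Now I would run the Dvoretzky--Erd\H{o}s computation: let $\hat T$ be the first return time of $\hat V$ to $0$, $\hat T_j$ the time of its $j$-th return, and $p_n:=\mathbb P(\hat T>n)$ (if $p_{s'}=0$ there is nothing to prove, so assume $p_n>0$). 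By the strong Markov property $\hat T_j\overset{d}{=}\tau_1+\dots+\tau_j$ with $\tau_l$ i.i.d.\ copies of $\hat T$; as the $\tau_l$ are positive, $\{\hat T_j\le n\}\subseteq\bigcap_l\{\tau_l\le n\}$, so $\mathbb P(\hat T_j\le n)\le(1-p_n)^j$. Counting the returns in $\{1,\dots,n\}$,
\[
\sum_{i=1}^n u_i=\sum_{j\ge1}\mathbb P(\hat T_j\le n)\le\sum_{j\ge1}(1-p_n)^j=\frac{1-p_n}{p_n},
\]
whence $p_n\le\big(1+\sum_{i=1}^n u_i\big)^{-1}\le(1+c\log n)^{-1}$. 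Taking $n=s'$ (so that $\log s'=\tfrac13\log m$) gives $\mathbb P(\hat V_i\neq 0\ \forall\,1\le i\le s')=p_{s'}=O(1/\log m)$, and combining with the previous display, $\PP_2=O(1/\log m)$, as required.

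The one real obstacle is the dependence of the steps of the true walk $(V_i)$, which blocks a direct appeal to the classical estimate. We circumvent it using the observation that we only need an \emph{upper} bound on a non-return probability, over a window that already contains $\{1,\dots,s\}$: we may thus replace $[1,s]$ by the much shorter $[1,s']$ with $s'=m^{1/3}$, over which sampling without replacement is indistinguishable from i.i.d.\ sampling except with polynomially small probability. (Any $s'=m^{\alpha}$ with $0<\alpha<\tfrac12$ would do; the window length $s=\sqrt m$ that the matching lower bound needs would be too long for the coupling.)
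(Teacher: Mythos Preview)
Your proof is correct and follows essentially the same route as the paper: encode the event as a two-dimensional random walk avoiding the origin, replace the dependent steps by i.i.d.\ ones, and bound the non-return probability by a Dvoretzky--Erd\H{o}s argument. The only cosmetic differences are that the paper compares the two models by a density-ratio bound over the full window $s=\lceil\sqrt m\,\rceil$ (showing each fixed colour arrangement has probability $\Theta(3^{-s})$ in either model) rather than your total-variation coupling over a shortened window $s'=\lfloor m^{1/3}\rfloor$, and it derives the non-return bound from the last-visit decomposition $1=\sum_{i=0}^s P'(i)Q'(s-i)\ge Q'(s)\sum_i P'(i)$ rather than your return-counting inequality $\sum_i u_i\le(1-p_n)/p_n$.
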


\noindent{\bf Proof:}\, To evaluate $\PP_2$, we will always assume that
$P_0$ is fair and consider $N$ to be a random necklace satisfying this
condition. For simplicity, we call the three types of beads blue, red, and green, respectively.
For $1\le i\le s$, let $b_i$ be the (signed) difference
between the number of blue beads in positions $1$ through $i$ and
the number of blue beads in positions $3m+1$ through $3m+i$. Let
$r_i$ be similarly calculated for the red beads. Clearly, $P_i$
is fair if and only if $b_i=r_i=0$. We consider the vectors $(b_i,r_i)$,
as locations of a random walk starting at $(b_0,r_0)=(0,0)$. The steps
$(b_i,r_i)-(b_{i-1},r_{i-1})$ can be calculated as the difference between
two vectors from the set $S=\{(0,0), (1,0),(0,1)\}$, one corresponding
to the color of bead at position $i+3m$, the other to the color of the
bead at position $i$.

For $1\le i\le s$, let $P(i)$ denote the probability that this walk
returns to the origin after $i$ steps, that is, $b_i=r_i=0$ (equivalently,
$P_i$ is fair), and let $Q(i)$ denote the probability that the random
walk does not return to the origin for $i$ steps. By Claim~\ref{12}(ii),
we have $P(i)=\Theta(1/i)$. We also have $\PP_2=Q(s)$.

It is simpler to analyze the random walk if it is memoryless, that is,
if the steps are independent random variables. This is not exactly the
case, but is ``almost true.'' Let the random variables $b'_i$ and $r_i'$
be calculated in the same way after we fill all the positions from
$1$ to $s$ and from $3m+1$ to $3m+s$ with beads of independently and
uniformly distributed random colors. In this case, we have a memoryless
random walk that starts at the origin, and each step can be obtained
as the difference of two vectors from $S$ selected uniformly at random.
A fixed arrangement of $b$ blue, $r$ red, and $g$ green beads in
the first $s$ positions ($b+r+g=s$) has probability exactly $1/3^s$
in the second model, while its probability in the first model is
$$\frac{(3m-s)!/((m-b)!(m-r)!(m-g)!)}{(3m)!/(m!)^3}=\left(\frac13+O\left(\frac sm\right)\right)^s=\Theta\left(\frac1{3^m}\right).$$
A similar statement is true for the colors in positions $3m+1$ through $3m+s$, and in both models, the color arrangements in the two intervals are independent. Therefore, the probabilities of the same event in the two models differ by at most a constant factor.

For $1\le i\le s$, we define $P'(i)$ as the probability that this memoryless random walk returns to the origin after $i$ steps and $Q'(i)$ as the probability that the modified random walk does not return for $i$ steps. By the above argument, we have $P'(i)=\Theta(P_i)=\Theta(1/i)$ and $\PP_2=\Theta(Q'(s))$.

In the following calculation, we split the possible walks according
to their last visit at the origin and use $P'(0)=Q'(0)=1$.
$$1 = \sum_{i=0}^s P'(i) Q'(s-i)\ge Q'(s)\sum_{i=0}^sP'(i),$$
where the inequality comes from the fact that $Q'(i)$ is a monotone
decreasing function of $i$. We have
$$\sum_{i=0}^s P'(i) =1+\sum_{i=1}^s\Theta\left(
\frac{1}{i}\right) = \Theta( \log s)=\Theta(\log m).
$$
Thus, $\PP_2=\Theta(Q'(s))=O(1/\log m)$, as needed.\hfill$\Box$
\medskip

\subsection{The second moment method}

In this subsection, we prove part (1) of Theorem~\ref{thm:big m} as well as the lower bound in part (2) of Theorem~\ref{thm:big m}.

Let $N$ be a random open necklace with $2m$ beads of each of $t$ types.
Fix an integer $s \leq t$ for the number of cuts.
Recall that in the proof of Proposition~\ref{p11}, we calculated the number of balanced partitions achievable with $s$ cuts as $n(s,2,t,m)=O_{s,t}(m^{s-1})$. However, here we can be more precise. We call a balanced partition achievable by $s$ cuts but not with fewer cuts an $s$-cut partition. In an $s$-cut partition the necklace is cut into $s+1$ non-empty segments and these segments alternate
between the two participants. Taking the odd numbered intervals we obtain an arbitrary partition of $tm$ beads to $\lceil(s+1)/2\rceil$ nonempty intervals while the even numbered intervals partition $tm$ beads to $\lfloor(s+1)/2\rfloor$ nonempty intervals. Therefore, the exact number of $s$-cut partitions is
$$
n'(s,t,m)= {{tm -1} \choose\lceil(s-1)/2\rceil} \cdot {{tm -1} \choose\lfloor(s-1)/2\rfloor}=\Theta_{s,t}(m^{s-1}).
$$

By Claim~\ref{12}(i), the probability that a fixed balanced partition
is fair is $P(2,t,m)=\Theta_t(m^{-(t-1)/2})$.

Let $Y$ be the random variable counting the
number of fair $s$-cut partitions. By linearity of
expectation and the estimates above we have
\begin{equation}\label{eq:expectation of Y}
\mathbb E (Y)=n'(s,t,m) P(2,t,m)=\Theta_{s,t}(m^{s-(t+1)/2}).
\end{equation}

It follows from Markov's inequality that
\begin{equation}\label{eq:upper bound with s cuts}
\mathbb P \big( X(2,t,m)=s \big) \le \mathbb P (Y>0 ) \le \mathbb E(Y) = O_{s,t} \big( m^{s-\frac{t+1}{2}} \big) .
\end{equation}
This finishes the proof of the upper bound in part (1) of Theorem~\ref{thm:big m}.

We note that for odd $t$ and $s=(t+1)/2$, the above expectation is
$\Theta_t(1)$ and therefore the upper bound in part (2) of
Theorem~\ref{thm:big m} does not follow from
Markov's inequality as in part (1). We will present the considerably more involved proof of that upper bound in Subsection~\ref{random walks}.

We proceed to estimate the second moment of $Y$, the number of fair $s$-cut partitions.  For an $s$-cut partition $P$ let $Y_P$ denote the
indicator random variable whose value is $1$ if and only if $P$ is fair.
By Claim~\ref{12}(ii), for any two distinct balanced
partitions $P$ and $P'$, the probability that both $P$ and $P'$ are fair is
$$
\mathbb E(Y_PY_{P'})=\Theta_t((mq)^{-(t-1)/2}),
$$
where $q$ is the distance between $P$ and $P'$. For $0<q \le tm/2$,
let $n^*(q,t,m,s)$ denote the number of ordered pairs of $s$-cut
partitions $(P,P')$, such that the distance between $P$ and $P'$ is $q$.
To estimate the number $n^*(q,t,m,s)$,
consider the collection of $2s$ cuts of both partitions, where if
both contain a cut at the same point we take it twice. These cuts
partition the interval of beads into $2s+1$ pairwise disjoint intervals
(including possibly some empty intervals, when the two partitions
share a cut). Let us select a part $H$ of $P$ and a part $H'$ of $P'$ such that $|H\setminus H'|=q$. The non-empty intervals can be classified as follows: the ones belonging to $H \cap H'$, the ones belonging to $H \setminus H'$, to
$H' \setminus H$, and those not in $H \cup H'$. The total number of beads in
the intervals of the first type is $|H\cap H'|=tm-q$, and this is also the
total number of beads in the intervals of the fourth type. The
number of beads in intervals of the second type is $|H\setminus H'|=q$, and so is
the number of beads in intervals of the third type. Call the first
and fourth types {\em even} and the second and third {\em odd} (this indicates the
parity of the number of sets among $H,H'$ to which the
corresponding interval belongs). With appropriate classification of the empty intervals into one of the four types one can make sure that even and odd intervals alternate so we either have $s+1$ even and $s$ odd intervals or vice versa.

We can reconstruct both partitions $P$ and $P'$ from the ordered list of
types and lengths of all the $2s+1$ intervals. In fact, we can save by
not specifying the length of the last interval in each type as that can be
computed from the lengths of the other intervals. We clearly have at most
$4^{2s+1}$ possibilities for the sequence of types. Even intervals have
lengths between $0$ and $tm-q$ and odd intervals have lengths between $0$
and $q$. So, in total, we have at most $4^{2s+1}(tm-q+1)^{s-1}(q+1)^{s-2}$
possibilities if there are $s+1$ odd and $s$ even intervals, and at most
$4^{2s+1}(tm-q+1)^{s-2}(q+1)^{s-1}$ in the reversed case. We have
$$n^*(q,t,m,s)=O_{s,t}(m^{s-1}q^{s-2}),$$
because the estimate holds for both of
these numbers if $s$ and $t$ are fixed.

Recall that the random variable $Y$ can be written as $Y=\sum Y_P$, where
$P$ ranges over the $s$-cut partitions.
Therefore
\begin{equation}\label{eq:second moment of Y}
\begin{split}
\mathbb E(Y^2)&\leq\mathbb E(Y) + \sum_{1\le q\leq tm/2} n^*(q,t,m,s)\Theta((mq)^{-(t-1)/2})\\
&= \Theta_{s,t} (m^{s-(t+1)/2})+\sum_{1\le q\leq tm/2}
O_{s,t} (m^{s-(t+1)/2}
q^{s-1-(t+1)/2}).
\end{split}
\end{equation}

When $s<(t+1)/2$, the last inequality shows that
$\mathbb E(Y^2)=O_{s,t} \big( m ^{s-\frac{t+1}{2}}  \big)$. Thus, using \eqref{eq:expectation of Y} and  Paley-Zygmund Inequality, we get
\begin{equation*}
\mathbb P \big(X(2,t,m) \le s \big)\ge\mathbb P (Y>0) \ge \frac{\mathbb E(Y )^2 }{\mathbb \mathbb E(Y^2)} = \Omega _{s,t}  \big(  m^{s-\frac{t+1}{2}} \big) .
\end{equation*}
Combining this result with \eqref{eq:upper bound with s cuts}, we obtain for all $1\le s< \frac{t+1}{2}$,
\begin{equation*}
\mathbb P \big( X(2,t,m)=s \big)=\mathbb P \big( X(2,t,m)\le s \big)-\mathbb P \big( X(2,t,m)\le s-1 \big) = \Omega _{s,t}  \big( m^{s-\frac{t+1}{2}} \big) .
\end{equation*}
This finishes the proof of part (1) of Theorem~\ref{thm:big m}.

When $t$ is odd and $s=\frac{t+1}{2}$, the inequality \eqref{eq:second moment of Y} shows that $\mathbb E (Y^2)=O_{s,t}\big( \log m  \big)$. Thus, using the same arguments we get that
\begin{equation*}
\mathbb P \big( X(2,t,m)=s \big) = \Omega _{s,t} \Big(  \frac{1}{\log m} \Big) .
\end{equation*}
This finishes the proof of the lower bound in part (2) of Theorem~\ref{thm:big m}.

Finally, we note that when $t$ is even and $s=t/2+1$, we get from \eqref{eq:second moment of Y} that $\mathbb E (Y^2 ) =O_{s,t}(\sqrt{m})$ and therefore by the same arguments
\begin{equation}\label{eq:bounded away for s=t/2+1}
\mathbb P \big( X(2,t,m)=s \big) =\Omega _{t} (1).
\end{equation}

\noindent
This is a special case of part (3) of Theorem~\ref{thm:big m}. The proof in the general case is similar, but requires an additional twist. We present it the next subsection. Since the computation involved is similar to the one above, we omit some of the details.
\vspace{0.2cm}

\subsection{Proof of part (3) of Theorem~\ref{thm:big m}}

We have $\mathbb P(X(2,t,m)=s)=0$ for $s>t$,
by the deterministic result, and
$\mathbb P(X(2,t,m)=s)$ goes to zero if $s\le (t+1)/2$ by parts (1) and (2) of Theorem~\ref{thm:big m}. We proved part (1) in the previous subsection and will prove the relevant direction of part (2) in the next. Our goal here is to prove
$\mathbb P(X(2,t,m)=s)=\Omega_t(1)$ in all remaining cases $t/2+1\le s\le t$. According to
\eqref{eq:bounded away for s=t/2+1},
this is true for $s=t/2+1$ and can be proved
by straightforward second moment argument. For the general case we will also use the second moment method, but with a modified distribution.

We will use the following simple claim. It holds for real intervals too, but here we use the word ``interval'' to represent any set of consecutive elements in a sequence (such as beads on a necklace).
\begin{claim}
\label{chyper}
There exists an absolute positive constant $c$ so that the
following holds.
Let $x$, $y$ and $U$ be positive and let
$X$ be a uniform random subset of $x$ points of an interval
$Y$ of length $y$.

Then the probability that there exists an interval $Z$
(of any length) in $Y$ so that  $|X \cap Z|$
deviates from its expectation,
$|Z|x/y$, by at least
$U$ is  at most $8e^{-cU^2/x}$.
\end{claim}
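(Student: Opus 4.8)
The idea is to reduce the statement about all intervals to a maximal inequality for a single prefix random walk, and then run Doob's inequality on an exact martingale built from it. Enumerate the points of $Y$ as $1,\dots,y$, and for $0\le j\le y$ put $S_j=|X\cap\{1,\dots,j\}|$ and $D_j=S_j-jx/y$, so that $D_0=D_y=0$. For an interval $Z=\{i+1,\dots,j\}$ we have $|X\cap Z|-|Z|x/y=D_j-D_i$; hence the event that some interval deviates from its expectation by at least $U$ is precisely $\{\max_{0\le j\le y}D_j-\min_{0\le j\le y}D_j\ge U\}$. Since $D_0=0$ forces $\max_jD_j\ge 0\ge\min_jD_j$, this event is contained in $\{\max_{0\le j\le y}|D_j|\ge U/2\}$, so it suffices to bound the probability of the latter.

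The crucial observation is that $(D_j)$ becomes a martingale after a deterministic rescaling. Revealing the indicators $\mathbf 1[1\in X],\dots,\mathbf 1[j\in X]$ one at a time and using that, conditionally on $\mathcal{F}_j$, the remaining $x-S_j$ points of $X$ are spread uniformly over $\{j+1,\dots,y\}$, a one-line computation gives $\mathbb E[D_{j+1}\mid\mathcal{F}_j]=\tfrac{y-j-1}{y-j}D_j$. Therefore $\widetilde D_j:=\tfrac{y}{y-j}D_j$ is a genuine martingale for $0\le j\le y-1$, with $\widetilde D_0=0$. Two elementary facts about the factor $\tfrac{y}{y-j}$ will be used: it is always $\ge 1$, so $|D_j|\le|\widetilde D_j|$; and it is $\le 2$ for $j\le y/2$, so on the first half of $Y$ the martingale $\widetilde D_j$ stays within a factor $2$ of the honestly centred hypergeometric walk $S_j-jx/y$. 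Finally, reversing the order of the points of $Y$ (which sends $X$ to another uniform $x$-subset and $D_j$ to $-D_{y-j}$) shows that $\max_{y/2\le j\le y}|D_j|$ has the same law as $\max_{0\le j\le y/2}|D_j|\le\max_{0\le j\le y/2}|\widetilde D_j|$, so it is enough to bound $\mathbb P\big(\max_{0\le j\le y/2}|\widetilde D_j|\ge U/2\big)$.

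For this I would combine Doob's maximal inequality with a uniform sub-Gaussian bound on the endpoint. By Hoeffding's comparison of sampling without and with replacement, $\mathbb E\big[e^{\theta(S_j-jx/y)}\big]\le e^{x\theta^2/8}$ for every $j$ and every real $\theta$; applied at $j=y/2$, where $\widetilde D_{y/2}=2(S_{y/2}-x/2)$, this gives $\mathbb E\big[e^{\theta\widetilde D_{y/2}}\big]\le e^{x\theta^2/2}$. Applying Doob's inequality to the non-negative submartingale $e^{\theta\widetilde D_j}$, $0\le j\le y/2$, yields $\mathbb P\big(\max_j\widetilde D_j\ge U/2\big)\le e^{-\theta U/2}\,\mathbb E[e^{\theta\widetilde D_{y/2}}]\le e^{-\theta U/2+x\theta^2/2}$, and the choice $\theta=U/(2x)$ gives $e^{-U^2/(8x)}$; the same bound holds for $-\widetilde D_j$. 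Summing the four contributions (the two signs and the two halves of $Y$) gives $\mathbb P(\exists Z)\le 4e^{-U^2/(8x)}\le 8e^{-U^2/(8x)}$, which is the claim with $c=1/8$ (and a fortiori any smaller $c$). Trivial edge cases ($x=y$, or $U$ larger than the trivial bound $x$ on $|D_j|$) make the probability $0$.

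The only genuinely non-routine step is obtaining a maximal bound that is uniform over all $\Theta(y^2)$ intervals: a union bound over intervals, or even over dyadic scales, loses far too much when $y$ is large, which is exactly why one must produce the exact martingale $\widetilde D_j$ and invoke Doob's inequality. The remaining ingredients — the conditional-expectation identity, the reversal symmetry, and Hoeffding's moment comparison — are standard, so I would expect the write-up to be short once the martingale is in hand.
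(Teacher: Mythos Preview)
Your argument is correct. Both you and the paper first rewrite the deviation of an arbitrary interval as a difference of prefix deviations $D_j=S_j-jx/y$, reduce to bounding $\max_j|D_j|$, and exploit the left--right reversal symmetry to restrict attention to $j\le y/2$. The divergence is in how the maximal bound over prefixes is obtained. The paper does it by hand with a stopping-time trick: expose positions one by one, freeze the first time the deviation exceeds $U/2$, and argue that conditionally on freezing, the deviation at the midpoint $j=y/2$ is still at least $U/4$ with probability at least $1/4$; a single application of Hoeffding's hypergeometric tail bound at the midpoint then finishes. You instead recognize that $\widetilde D_j=\tfrac{y}{y-j}D_j$ is an exact martingale and apply Doob's submartingale inequality to $e^{\theta\widetilde D_j}$, combined with Hoeffding's MGF comparison $\mathbb E[e^{\theta(S_j-jx/y)}]\le e^{x\theta^2/8}$ at the endpoint $j=y/2$. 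Your route is more systematic and delivers the explicit constant $c=1/8$ (indeed $4e^{-U^2/(8x)}$, slightly better than the stated $8e^{-cU^2/x}$); the paper's route avoids naming the martingale and is essentially the same maximal inequality proved from first principles. The two arguments are equivalent in spirit---the paper's ``freeze and look at the midpoint'' step is a bare-hands version of Doob's inequality---so neither buys anything the other does not, beyond the explicit constant on your side and lighter prerequisites on theirs.
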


\noindent{\bf Proof:}
If there exists a interval of $Y$ with deviation above $U$, then there also exists an initial interval (starting at the left end of $Y$) with deviation larger than $U/2$, so it suffices to consider initial intervals. It is also enough to considers initial intervals of length at most $y/2$ by symmetry. Consider the
first $y/2$ elements of $Y$
one by one from left to right, exposing for each of them
in turn if it belongs to the random set $X$. If during the process we ever
reach an initial interval in which the number of elements of $X$
deviates from its expectation by more
than $U/2$, stop the process
and reveal all remaining elements of $X$.
Conditioning on having a large deviation where we stopped the process,
with probability at least, say, $1/4$ we still have deviation
of at least $U/4$ in the interval
of the $y/2$ first points of $Y$. But this probability is  at most
$e^{cU^2/x}$ for some absolute positive constant $c$, by
standard estimates for large deviations of a hypergeometric
distribution (see \cite{Hoe} or \cite{JLR}, Theorem 2.10 and Theorem 2.1.)
This implies that the probability of any initial segment of $Y$ of length at most $y/2$ has deviation above $U/2$ is at most $4e^{-cU^2/x}$. The probability of an initial segment of any length existing with such a high deviation is at most twice this and if no such initial interval exists then the deviation of any interval is at most $U$.
This proves the claim. \hfill$\Box$

Returning to the proof of the theorem, recall that $s$, $t$ and $m$ are
positive integers satisfying $t/2+1\le s\le t$. We will treat $t$ and $s$
as a constants and assume in our calculation that $m$ is sufficiently
large depending on $t$. Let $D$ be the uniform distribution over necklaces
$N$ with $2m$ beads of type $i$ for every $1 \leq i \leq t$. We need to
prove that $\mathbb P_D(X(N)=s)=\Omega_t(1)$. In what follows we consider
another distribution $D'$ on some of the same possible necklaces obtained
in a two step process as follows.

We split
the necklace $N$ into $s$ intervals $I_i$, each consisting
of  $2mt/s$ consecutive beads. Further we split each interval $I_i$ into three equal length subintervals $I_{i,1}$, $I_{i,2}$ and $I_{i,3}$ out which $I_{i,2}$ lies in the middle. Strictly speaking, some rounding is necessary unless $3s$ divides $2mt$, but we ignore these roundings as they do not matter in our calculations. We will choose the positive integer $L<m/2$ later.  In the first step of our two step process we place $L$ random beads of type $i$ uniformly in both of the intervals $I_{i,1}$ and $I_{i,3}$, for every $i$. We call the beads so placed \emph{seeds}, so we have $2sL$ seeds in total. As the second step of our process generating the distribution $D'$ we place the remaining $2mt-2sL$ beads (the \emph{non-seeds}, $2m-2L$ of them of type $i$ for $i\le s$ and $2m$ beads of type $i$ for $i>s$) uniformly in the available slots.

For a type $i$ and an interval $J$ in the necklace we denote the number of beads of type $i$ in $J$ by $n_i(J)$. The dependence on $N$ is implicit. We call a necklace $N$ \emph{normal} if the distribution of types in every interval is close to its expectation, that is, if for every interval $J$ and every type $i$, we have $|n_i(J)-\mathbb E_{D'}(n_i(J))|<L/(4t)$.

We use the asymptotic notations $O_t(\cdot)$, $\Omega_t(\cdot)$ and $\Theta_t(\cdot)$ to hide positive multiplicative factors depending on $t$ alone. These factors are not allowed to depend on $m$ or $L$. (Dependence on $s$ is allowed as $s\le t$ can take finitely many distinct values for a fixed $t$.)

\begin{claim}\label{normal}
$$\mathbb P_{D'}(N\hbox{ is not normal})=O_t(1)e^{-\Omega_t(L^2/m)}$$
\end{claim}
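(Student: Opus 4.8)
The plan is to bound the probability that $N$ fails to be normal by a union bound over types $i$ and over the two stages of the generation process, applying Claim~\ref{chyper} at each stage with $U=\Theta_t(L)$ and the relevant sample size of order $m$. First I would reduce the event ``$N$ is not normal'' to a statement about a bounded number of auxiliary random experiments. Fix a type $i$. The seeds of type $i$ are placed uniformly among the slots of $I_{i,1}$ and among the slots of $I_{i,3}$; these are two independent uniform $L$-subsets of intervals of length $\Theta_t(m)$, so Claim~\ref{chyper} (with $x=L$, $y=\Theta_t(m)$, $U=\Theta_t(L)$) shows that with probability $1-O_t(1)e^{-\Omega_t(L^2/m)}$ the number of type-$i$ seeds in every subinterval of $I_{i,1}$ (resp.\ $I_{i,3}$) is within, say, $L/(8t)$ of its mean. (All other intervals $I_{j,\ell}$ contain no type-$i$ seeds, so there is nothing to check there; for an arbitrary interval $J$ one writes the count of type-$i$ seeds in $J$ as a sum of at most two such subinterval-deviations plus full-interval contributions that match their mean exactly.)

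Next I would handle the second stage conditionally on the seeds. Given the seed placement, the non-seeds of type $i$ (there are $2m-2L$ of them for $i\le s$, and $2m$ for $i>s$) are placed uniformly into the $2mt-2sL=\Theta_t(m)$ remaining slots. So the type-$i$ non-seeds again form a uniform random subset of size $\Theta_t(m)$ of an interval-like ground set of size $\Theta_t(m)$, and Claim~\ref{chyper} applies once more with $x=\Theta_t(m)$, $y=\Theta_t(m)$ and $U=\Theta_t(L)$, giving failure probability $O_t(1)e^{-\Omega_t(L^2/m)}$ that the count of type-$i$ non-seeds in any interval deviates from its conditional mean by more than $L/(8t)$. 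The only subtlety here is that the ``available slots'' are not literally a contiguous interval — they are the complement of the seed positions. But the ordering of slots inherited from the necklace still makes them a linearly ordered set, and the hypergeometric large-deviation estimate used in the proof of Claim~\ref{chyper} depends only on the sample size and the ground-set size, not on geometric contiguity; alternatively one can note that restricting attention to the slots inside a fixed interval $J$ still yields a sub-hypergeometric count. I would remark that the conditional mean $\mathbb E_{D'}(n_i(J)\mid\text{seeds})$ differs from the unconditional mean $\mathbb E_{D'}(n_i(J))$ by at most the seed-deviation already controlled in the first stage, so combining the two stages (each contributing at most $L/(8t)$) gives total deviation below $L/(4t)$, as required, on the intersection of the two good events.

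Finally, a union bound over the $t$ types and the $O(1)$ experiments per type multiplies the probability by a factor of $O_t(1)$, which is absorbed, yielding
$$\mathbb P_{D'}(N\hbox{ is not normal})=O_t(1)e^{-\Omega_t(L^2/m)}.$$
The main obstacle is bookkeeping rather than anything deep: one must be careful that ``for every interval $J$'' is genuinely covered — this is exactly what the ``initial interval'' reduction inside Claim~\ref{chyper} buys us, so the key point is to invoke that claim for each of the relevant ground sets rather than trying to reprove a union bound over the quadratically many intervals by hand. A secondary point to get right is that Claim~\ref{chyper} is stated for a uniform subset of an interval, so one should either verify that its proof goes through verbatim when the ground set is an arbitrary linearly ordered set (the hypergeometric tail bound does not see the geometry), or phrase each application so that the ground set genuinely is an interval of the necklace intersected with a fixed color class.
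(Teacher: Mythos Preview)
Your plan is essentially the paper's: apply Claim~\ref{chyper} to each of the $2s$ seed placements and each of the $t$ non-seed placements, take a union bound, and then check that on the resulting good event every $|n_i(J)-\mathbb E_{D'}(n_i(J))|<L/(4t)$. The only imprecision is in the combining step: a single interval $J$ can partially overlap \emph{two} of the $2s$ seed-carrying subintervals $I_{j,1},I_{j,3}$ (and these may belong to types $j\ne i$, since seeds of any type affect the number of available slots in $J$ and hence the conditional mean $E'$ of type-$i$ non-seeds there), so $|E'-E|$ can be up to twice your per-subinterval threshold rather than once; the paper therefore uses threshold $L/(12t)$, records that $E'$ is a linear function of the at most two random seed-counts with coefficients of absolute value below $1$, and gets $|n_i(J)-E|<2L/(12t)+L/(12t)=L/(4t)$.
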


\noindent{\bf Proof:}\, We can identify $2s+t$ sets placed uniformly in
the process defining the distribution $D'$. For $i\le s$ we have two
sets of seeds of type $i$ placed uniformly in the intervals $I_{i,1}$
and $I_{i,3}$, respectively. And for any type $i$ we have the set of
non-seed beads of type $i$ placed uniformly in the positions not occupied
by seeds. We apply Claim~\ref{chyper} for each of these processes. The
union bound yields the estimate stated in the lemma for the existence of
an interval in any of these processes where the number of beads placed in
the interval deviates from its expectation by more than $L/(12t)$. It
remains to prove that assuming no deviation exceeds $L/(12t)$ the
resulting necklace $N$ is normal.

To see this, let us fix an interval $J$ and a type $i$. Let $E$ stand
for the expectation of $n_i(J)$ in the distribution $D'$ and let $E'$
stand for the same expectation conditioned on the placement of the seeds
(so $E'$ is a random variable as $n_i(J)$ but $E$ is a constant). Note
that $E'$ is determined by how many of the seeds are placed inside $J$
in each of the $2s$ relevant subintervals. This is deterministic for
all but at most two of the subintervals (the ones containing the ends of
$J$). Note also that the dependence of $E'$ on these numbers is linear
with all coefficients below $1$ in absolute value. With our low deviation
assumption on the seeds this means that $|E'-E|<2L/(12t)$ as $E$ is the
expectation of $E'$. We also assumed that the number of non-seeds of
type $i$ in $J$ differs from its expectation after the seeds are placed
by at most $L/(12t)$. So we have $|n_i(J)-E'|\le L/(12t)$ and therefore
$|n_i(J)-E|<L/(4t)$ as needed.\hfill $\Box$

\begin{claim}\label{atleast}
If $N$ is normal, then $X(N)\ge s$.
\end{claim}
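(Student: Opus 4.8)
\emph{Plan for the proof of Claim~\ref{atleast}.} The plan is a short pigeonhole-and-counting argument with no random ingredient: one argues deterministically from the hypothesis that $N$ is normal. I would suppose for contradiction that $N$ is normal but $X(N)\le s-1$, so $N$ admits a fair partition with at most $s-1$ cuts. Each cut sits between two consecutive beads and hence lies strictly inside at most one of the $s$ pairwise disjoint intervals $I_1,\dots,I_s$; with only $s-1$ cuts available, some interval $I_j$ contains no cut. Then all beads of $I_j$ belong to a single segment of the partition, which is given to one of the two thieves, say thief $A$, so $A$ receives at least $n_j(I_j)$ beads of type $j$. Since the partition is fair, $A$ receives exactly $m$ beads of type $j$, and therefore $n_j(I_j)\le m$. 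The whole proof then reduces to showing that normality forces $n_j(I_j)>m$ for every $j\in\{1,\dots,s\}$ — we do not know in advance which $I_j$ is cut-free, but normality is a statement about all intervals at once, so this is harmless.

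To get the required lower bound on $n_j(I_j)$, I would first compute its expectation under $D'$. The interval $I_j$ contains all $2L$ seeds of type $j$ and none of the other seeds, so exactly $|I_j|-2L=2mt/s-2L$ of the $2mt-2sL$ non-seed slots lie in $I_j$; since the $2m-2L$ non-seeds of type $j$ are placed uniformly,
\[
\mathbb E_{D'}\big(n_j(I_j)\big)=2L+(2m-2L)\cdot\frac{2mt/s-2L}{2mt-2sL}=2L+\frac{2m-2L}{s}.
\]
By normality, $n_j(I_j)\ge \mathbb E_{D'}(n_j(I_j))-L/(4t)=2L+\tfrac{2m-2L}{s}-\tfrac{L}{4t}$, so it suffices to check that $2L+\tfrac{2m-2L}{s}-\tfrac{L}{4t}>m$. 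Writing $\delta=m-2L$, the left-hand side equals $m+\big(\tfrac{m+\delta}{s}-\tfrac{m-\delta}{8t}-\delta\big)$, so the inequality holds as soon as $\delta+\tfrac{m-\delta}{8t}<\tfrac ms$; since $s\le t$ this is implied by $\delta<\tfrac{7m}{8t}$, which any admissible $L$ with $m-2L=O_t(1)$ satisfies (for instance $L=\lfloor (m-1)/2\rfloor$, where $m-2L\le 2$), once $m$ is large compared to $t$. This contradicts $n_j(I_j)\le m$ and yields $X(N)\ge s$.

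I do not expect a genuine obstacle inside this claim; the one thing that must be gotten right is the choice of the parameter $L$, which has to serve two competing purposes. On one hand $L$ must be close to $m/2$: the ``seed surplus'' $\mathbb E_{D'}(n_j(I_j))-m=2L(1-\tfrac1s)-m(1-\tfrac2s)$ stays positive, even after subtracting the normality slack $L/(4t)$, only when $2L$ is essentially $m$, and this is tight precisely in the worst case $s=t$. On the other hand $L$ must remain below $m/2$ so that, in the complementary bound $\mathbb P_{D'}(X(N)\le s)=\Omega_t(1)$ established in the rest of this subsection, enough non-seed beads of each of the first $s$ types are left to build a fair $s$-cut partition. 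Everything else in the present claim is the expectation identity above together with Claims~\ref{chyper} and~\ref{normal}, which are already in hand.
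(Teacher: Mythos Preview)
Your deduction that the cut-free interval $I_j$ forces $n_j(I_j)\le m$ is sound, as is the formula $\mathbb E_{D'}(n_j(I_j))=2L+(2m-2L)/s$. The gap is in the choice of $L$. You correctly note that your inequality $\mathbb E_{D'}(n_j(I_j))-L/(4t)>m$ needs $m-2L=O_t(1)$, and you identify the competing constraint as merely $L<m/2$ coming from Claim~\ref{atmost}. But there is a far more restrictive constraint you have missed: the last step of the subsection transfers the $\Omega_t(1)$ probability from $D'$ back to the uniform distribution $D$ via the bound $P'(N)/P(N)\le e^{O_t(L^2/m)}$ on normal necklaces, and this factor is $O_t(1)$ only when $L=O_t(\sqrt m)$. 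With your choice $L=\lfloor(m-1)/2\rfloor$ the density ratio is $e^{\Theta_t(m)}$ and the transfer becomes vacuous. Conversely, with $L=\Theta_t(\sqrt m)$ and any $s\ge3$ your surplus $2L(1-1/s)-m(1-2/s)$ is negative, so your argument for the present claim collapses exactly in the regime where it is needed.

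The paper's proof avoids this by looking at the \emph{other} thief, the one who receives no bead of $I_i$. That thief holds $mt$ beads, all outside $I_i$ and hence containing no seed of type $i$; at least $mt/3$ of them lie in non-middle subintervals $I_{j,1}$ or $I_{j,3}$ with $j\ne i$, where a $\Theta_t(L/m)$ fraction of positions are seeds of type $j$ and so unavailable to type $i$. This yields an expected shortfall of $\Theta_t(L)$ in that thief's type-$i$ count relative to the fair value $m$, with no competing $\Theta_t(m)$ term to overcome; pigeonholing the shortfall over his at most $s$ intervals gives one with deviation at least $L/(4t)$, violating normality. In short, the paper exploits the seeds that \emph{block} positions for type $i$ rather than those that \emph{force} type $i$, and only the former effect has size $\Theta_t(L)$ unconditionally.
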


\noindent{\bf Proof:}\, We prove the contrapositive: If $X(N)<s$, then $N$ is not normal. So let us fix a fair partition of $N$ with fewer than $s$ cuts. Note that one of the intervals $I_i$ is not cut at all. Fix such an $i$ and note that one of the players receive no part of $I_i$. We look at the expected value of type $i$ beads (necessarily non-seeds) in the intervals he receives. For any $j$, a position of $I_{j,2}$ receives a bead of type $i$ with probability $(2m-2L)/(2mt-2sL)\le1/t$, while a position in $I_{j,1}$ or $I_{j,3}$ receives a seed with probability $3sL/(2mt)$, so it receives a non-seed of type $i$ with probability $(1-3sL/(2mt))(2m-2L)/(2mt-2sL)\le1/t-3sL/(2mt^2)$. As the partition is balanced the player receives $mt$ beads. There are only $2mt/3$ beads in the middle subintervals $I_{j,2}$, so at least $mt/3$ beads are coming from non-middle subintervals and therefore the expected number of type $i$ beads this part contains is at most $m-sL/(4t)$. Our partition is fair, so the actual number of type $i$ beads the player receives is exactly $m$. The discrepancy is coming from the at most $s$ intervals the player receives, so the actual number of type $i$ beads in one of those deviates from its expectation by at least $L/(4t)$. This proves that $N$ is not normal.\hfill $\Box$

\begin{claim}\label{atmost}
$$\mathbb P_{D'}(X(N)\le s)=\Omega_t(1)$$
\end{claim}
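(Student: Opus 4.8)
The plan is to run the second moment method on the random variable $Y$ counting fair $s$-cut partitions of $N$, but now under the distribution $D'$ rather than the uniform distribution $D$, exploiting the seed structure. Recall that in an $s$-cut partition the necklace is cut into $s+1$ nonempty segments alternating between the two players; since we have exactly $s$ intervals $I_i$ and each $I_i$ is (by construction) large, the natural family of candidate partitions to count is those whose cuts fall one in each interval $I_i$ — more precisely one cut inside each middle subinterval $I_{i,2}$, plus one of the two endpoints of the necklace serving as the ``$(s+1)$st'' boundary. Restricting attention to this family, there are $\Theta_t(m^{s})$ such partitions (one free cut location in each of $s$ middle subintervals, each of length $\Theta_t(m)$), wait — we must be careful: an $s$-cut partition has $s$ cuts, and placing one cut in each $I_{i,2}$ gives exactly $s$ cuts, so this is the right count, $\Theta_t(m^s)$ candidate partitions.

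First I would compute $\mathbb E_{D'}(Y)$. For a fixed candidate partition $P$ in this family, I need $\mathbb P_{D'}(P\text{ is fair})$. The key point is that $P$ separates, within each interval $I_i$, the seeds of $I_{i,1}$ (which go to one player) from the seeds of $I_{i,3}$ (which go to the other), because the cut lies in the middle subinterval $I_{i,2}$. Thus each player automatically receives exactly $\sum_i L = sL$ seeds, and moreover receives exactly $L$ seeds of \emph{each} type $i\le s$ on each side — so the seeds contribute \emph{zero net discrepancy} in every coordinate. What remains is that the non-seed beads must balance: this is again (conditionally) a collision-type event for multivariate hypergeometric distributions with parameters $\Theta_t(m)$, so by the same Stirling computation as in Claim~\ref{12}(i) we get $\mathbb P_{D'}(P\text{ is fair})=\Theta_t(m^{-(t-1)/2})$, hence $\mathbb E_{D'}(Y)=\Theta_t(m^{s-(t-1)/2})$. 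Since $s\ge t/2+1$, we have $s-(t-1)/2\ge 3/2>0$, so $\mathbb E_{D'}(Y)\to\infty$.

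Next I would bound $\mathbb E_{D'}(Y^2)=\sum_{P,P'}\mathbb P_{D'}(P,P'\text{ fair})$. Writing $q$ for the distance between $P$ and $P'$, and noting that in this family the distance is governed by how far apart the cuts are inside each middle subinterval, the number of ordered pairs $(P,P')$ at distance roughly $q$ is $O_t(m^{2s-1}q^{?})$ — the correct exponent is obtained exactly as in the derivation of $n^*(q,t,m,s)=O_{s,t}(m^{s-1}q^{s-2})$ in the previous subsection, adapted to the fact that both partitions live in the restricted family; I expect $n^*$ restricted to the family to be $O_t(m^{s}q^{s-1})$ or similar. The conditional probability $\mathbb P_{D'}(P,P'\text{ fair})$ is again $\Theta_t((mq)^{-(t-1)/2})$ by Claim~\ref{12}(ii), since once more the seeds cancel and only the non-seed hypergeometric collision in the symmetric difference matters. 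Summing over $q$ from $1$ to $\Theta_t(m)$, the dominant term when $s>(t+1)/2$ (equivalently $s\ge t/2+1$, which is our hypothesis for $t$ even and is what we need; for $t$ odd the case $s=(t+1)/2$ is excluded here as it is covered by part (2)) gives $\mathbb E_{D'}(Y^2)=\Theta_t(\mathbb E_{D'}(Y)^2)$, i.e.\ the off-diagonal contribution with $q=\Theta_t(m)$ dominates and matches $\mathbb E_{D'}(Y)^2$ up to a constant. Paley--Zygmund then yields $\mathbb P_{D'}(Y>0)=\Omega_t(1)$, and since $Y>0$ forces $X(N)\le s$, we are done.

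The main obstacle is the bookkeeping in the second-moment bound: one must check that the off-diagonal sum $\sum_q n^*_{\text{restricted}}(q)\cdot\Theta_t((mq)^{-(t-1)/2})$ is $O_t(\mathbb E_{D'}(Y)^2)$, i.e.\ that no range of $q$ (small, medium, or $q=\Theta_t(m)$) produces a super-constant blow-up relative to $\mathbb E_{D'}(Y)^2=\Theta_t(m^{2s-(t-1)})$. For $q=\Theta_t(m)$ this is exactly the ``main term equals $\mathbb E(Y)^2$'' statement, which is where the two-step seed construction pays off — under the plain uniform distribution $D$ the analogous sum would be larger because the near-diagonal pairs (small $q$) would dominate once $s=t$; the seeds force $\mathbb E(Y)$ up to $\Theta_t(m^{s-(t-1)/2})$, large enough that its square beats the small-$q$ contributions. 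One should also double-check the harmless rounding issues (divisibility of $2mt$ by $3s$, $L<m/2$, $L$ chosen — e.g.\ $L=m^{2/3}$ — so that Claim~\ref{normal} gives normality whp while $L=o(m)$ keeps all the $\Theta_t(m)$-size estimates intact), and that the restricted family of partitions really consists of genuine $s$-cut partitions (all $s+1$ segments nonempty), which holds since the cuts lie strictly inside the middle subintervals. Combining Claims~\ref{atleast} and~\ref{atmost} with Claim~\ref{normal} then shows $\mathbb P_{D'}(X(N)=s)=\Omega_t(1)$, and a standard change-of-measure argument (bounding the Radon--Nikodym derivative $dD'/dD$, or rather observing that $D'$-probability of any event is at most a $\mathrm{poly}_t(m)$ factor times its $D$-probability, which is insufficient — instead one uses that $\mathbb E_{D'}(Y)$ and $\mathbb E_{D'}(Y^2)$ translate back) transfers the conclusion to the uniform model $D$, giving $\mathbb P_D(X(N)=s)=\Omega_t(1)$ as required.
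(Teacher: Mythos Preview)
Your approach is the same as the paper's: run Paley--Zygmund under $D'$ on the count $Y$ of fair \emph{central} partitions (one cut in each middle subinterval $I_{i,2}$, segments alternating between the two players), using that the seeds split evenly in any central partition so that fairness reduces to balance of the non-seeds, to which both parts of Claim~\ref{12} apply verbatim.

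There is, however, a concrete counting error that throws off all your exponents. You claim $\Theta_t(m^s)$ candidate partitions, one free cut per middle subinterval. But only \emph{balanced} partitions (each player receiving exactly $tm$ beads) can be fair, and balance imposes one linear equation on the cut positions $(c_1,\dots,c_s)$, leaving $\Theta_t(m^{s-1})$ balanced central partitions. Hence $\mathbb E_{D'}(Y)=\Theta_t(m^{s-1})\cdot\Theta_t(m^{-(t-1)/2})=\Theta_t(m^{s-(t+1)/2})$, not $\Theta_t(m^{s-(t-1)/2})$. The pair count at distance $q$ is likewise $O_t(m^{s-1}q^{s-2})$ (exactly the bound $n^*(q,t,m,s)$ from the previous subsection), and summing over $q$ gives $\mathbb E_{D'}(Y^2)=O_t(m^{2s-t-1})$ when $s>(t+1)/2$; with these corrected exponents the Paley--Zygmund ratio is $\Omega_t(1)$ and the claim follows. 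Your closing remarks about the role of the seed construction are also backwards: the seeds do not boost $\mathbb E_{D'}(Y)$---they cancel exactly in central partitions, and the second-moment computation here is identical to what it would be under $D$. The seeds are there solely for Claim~\ref{atleast} (forcing $X\ge s$ on normal necklaces); Claim~\ref{atmost} merely verifies that the seed planting has not destroyed the $\Omega_t(1)$ probability of achieving $X\le s$. The transfer back to $D$ is a separate claim and is not part of what you are asked to prove here.
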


\noindent{\bf Proof:}\, We call a balanced partition of the necklace between the two players \emph{central} if it is obtained from $s$ cuts, one in each of the middle intervals $I_{i,2}$ by distributing the resulting $s+1$ intervals alternately between the two players. Note that the seeds are distributed equally between the players in a central partition: each players receive exactly $L$ seeds of each type. Thus, a central partition is fair if and only if both players receive an equal number on non-seeds of type $i$ for each $i$. The distribution of the non-seeds are uniform on the available slots, so both parts of Claim~\ref{12} applies: the probability under the distribution $D'$ that a central partition is fair is $\Theta_t(m^{-(t-1)/2})$ and the probability of two distinct central partitions are simultaneously fair is $\Theta_t((mq)^{-(t-1)/2})$, where $q$ is the distance between the central partitions. It should be acknowledged that the situation here differs from the situation considered in Claim~\ref{12} in that there we have $2m$ beads of each type whereas here we have $2m-2L$ non-seed beads in types $1\le i\le s$. But as $2m-2L$ is between $m$ and $2m$ the estimate still holds and the original proof of Claim~\ref{12} applies in this modified setting almost verbatim. Note that the hidden constant in our estimate does not depend on $L$.

We apply the Paley-Zygmund Inequality for the random variable $Y$ counting the fair central partitions. This argument is very similar to the one presented at the end of the previous subsection. We have $\Theta(m^{s-1})$ central partitions, so
$$\mathbb E_{D'}(Y)=\Theta_t(m^{s-1})\Theta_t(m^{-(t-1)/2})=\Theta_t(m^{s-(t+1)/2}).$$
We calculate $\mathbb E_{D'}(Y^2)$ as the sum of probabilities for ordered pairs of central partitions that they are simultaneously fair. We have $\Theta_t(s^{m-1})$ pairs of equal partitions and for $1\le q$ we have $O(m^{s-1}q^{s-2})$ pairs of distance $q$. This gives
$$\mathbb E_{D'}(Y^2)=O_t(m^{s-(t+1)/2})+\sum_{q=1}^{mt/2}O(m^{s-(t+1)/2}q^{s-(t+1)/2-1})=O(m^{2s-t-1}),$$
where we used $s>(t+1)/2$ in the last step. Using the Paley-Zygmund Inequality we obtain
$$\mathbb P_{D'}(Y>0)\ge\frac{(\mathbb E_{D'}(Y))^2}{\mathbb E_{D'}(Y^2)}=\Omega_t(1).$$

To finish the proof of the claim simply observe that $Y>0$ means that there is a fair central partition, so $X(N)\le s$ as central partitions have $s$ cuts.\hfill $\Box$
\medskip

We set $L$ such that $N$ is normal with probability at least $1-\mathbb P_{D'}(X(N)\le s)/2$. By Claims~\ref{normal} and \ref{atmost} this can be achieved by an appropriate choice also satisfying $L=O(\sqrt m)$. With this choice of $L$ we have that $N$ is normal and $X(N)\le s$ with probability $\Omega_t(1)$. In this case we actually have $X(N)=s$ by Claim~\ref{atleast}. These estimates hold for a random necklace $N$ in the distribution $D'$. To prove part (3) of Theorem~\ref{thm:big m} we need a similar estimate in the uniform distribution $D$. The following Claim finishes the proof because it establishes that if $L=O(\sqrt m)$, then $D'$-weight of a normal necklace is only constant times its $D$-weight.

\begin{claim}
We have $P'(N)/P(N)\le e^{O_t(L^2/m)}$ for any normal necklace $N$, where $P'(N)$ stands for the probability of obtaining $N$ in the distribution $D'$ and $P(N)$ is the probability of obtaining it in $D$.
\end{claim}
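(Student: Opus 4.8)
The plan is to write both $P(N)$ and $P'(N)$ in closed form, form the ratio, and compare the two sides after taking logarithms. Since $D$ is uniform over the $(2mt)!/((2m)!)^t$ necklaces with $2m$ beads of each type, $P(N)=((2m)!)^t/(2mt)!$ for every $N$. For $P'(N)$ put $\ell=2mt/(3s)$ for the common length of the subintervals $I_{i,j}$. A necklace is produced in $D'$ by first fixing the $2sL$ seed positions and then filling the remaining $2mt-2sL$ slots with the non-seeds; summing over all \emph{seed designations} of a fixed $N$ --- i.e.\ choices of $L$ type-$i$ positions inside $I_{i,1}$ and $L$ type-$i$ positions inside $I_{i,3}$, for each $i\le s$ --- and noting that every valid designation has the same probability $\binom{\ell}{L}^{-2s}\cdot((2m-2L)!)^s((2m)!)^{t-s}/(2mt-2sL)!$ while there are $\prod_{i\le s}\binom{n_i(I_{i,1})}{L}\binom{n_i(I_{i,3})}{L}$ of them, one gets
$$\frac{P'(N)}{P(N)}=\left(\prod_{i\le s}\frac{\binom{n_i(I_{i,1})}{L}\binom{n_i(I_{i,3})}{L}}{\binom{\ell}{L}^{2}}\right)\cdot\frac{((2m-2L)!)^s\,(2mt)!}{((2m)!)^s\,(2mt-2sL)!}.$$
(If $N$ admits no seed designation then $P'(N)=0$ and the claim is trivial, so one may assume $n_i(I_{i,1}),n_i(I_{i,3})\ge L$.)

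To bound the first factor I would use that $\binom{n}{L}/\binom{\ell}{L}=\prod_{j=0}^{L-1}\frac{n-j}{\ell-j}$ is increasing in $n$, so normality lets me replace every $n_i(I_{i,j})$ by $E^{+}:=\mathbb E_{D'}(n_i(I_{i,1}))+L/(4t)$; here $\mathbb E_{D'}(n_i(I_{i,1}))=L+(\ell-L)p$ with $p=\frac{2m-2L}{2mt-2sL}$, and since $(\ell-L)p$ is of order $m$ while $L=O(\sqrt m)$, one has $E^{+}=\Theta_t(m)$ and $E^{+}\gg L$. Writing $\log(n-j)=\log n+\log(1-j/n)$ (and the same with $\ell$) and using that $j/E^{+}$ and $j/\ell$ are $O_t(L/m)$ uniformly in $j<L$, one gets $\log\binom{E^{+}}{L}/\binom{\ell}{L}=L\log(E^{+}/\ell)+O_t(L^2/m)$. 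A short computation gives $E^{+}/\ell=p\bigl(1+\Theta_t(L/m)\bigr)$, hence $L\log(E^{+}/\ell)=L\log p+O_t(L^2/m)$, and since $\log p=-\log t+\log\frac{1-L/m}{1-sL/(mt)}=-\log t+O_t(L/m)$ we obtain $L\log p=-L\log t+O_t(L^2/m)$. Therefore the logarithm of the first factor is at most $-2sL\log t+O_t(L^2/m)$.

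For the second factor, $\log\frac{(2mt)!}{(2mt-2sL)!}=\sum_{j=0}^{2sL-1}\log(2mt-j)=2sL\log(2mt)+\sum_{j=0}^{2sL-1}\log(1-\tfrac{j}{2mt})$, and since $\tfrac{j}{2mt}=O_t(L/m)$ the last sum is $-\binom{2sL}{2}/(2mt)+O_t(L^3/m^2)=O_t(L^2/m)$; likewise $s\log\frac{(2m-2L)!}{(2m)!}=-2sL\log(2m)+O_t(L^2/m)$. Adding, the logarithm of the second factor equals $2sL(\log(2mt)-\log(2m))+O_t(L^2/m)=2sL\log t+O_t(L^2/m)$. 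Combining with the previous paragraph, the $\pm2sL\log t$ terms cancel and $\log(P'(N)/P(N))=O_t(L^2/m)$, which is the claim. (Throughout, $L<m/2$ guarantees that higher-order corrections such as $L^3/m^2$ are swallowed by $O_t(L^2/m)$.)

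I expect the main obstacle to be the bookkeeping of the error terms rather than any single inequality: the dominant contributions of the two factors are $\mp2sL\log t=\mp\Theta_t(L)$, which for $L$ as large as $\Theta(\sqrt m)$ is far larger than the target $O_t(L^2/m)=O_t(1)$, so the cancellation must be exact and the \emph{second-order} terms must be tracked honestly. This is exactly where one needs the precise identity $\mathbb E_{D'}(n_i(I_{i,1}))=L+(\ell-L)p$ together with the normality bound $|n_i(J)-\mathbb E_{D'}(n_i(J))|<L/(4t)$ --- and it is convenient that, by the monotonicity of $\binom{n}{L}/\binom{\ell}{L}$ in $n$, only the \emph{upper} half of the normality bound is actually used.
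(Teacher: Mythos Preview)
Your proof is correct and follows essentially the same line as the paper's: both derive the same closed-form ratio $P'(N)/P(N)$, upper-bound each $n_i(I_{i,j})$ via normality, and then estimate. The only difference is in the final calculus---the paper uses the sandwich $(a-b)^b<\binom{a}{b}b!\le a^b$ to collapse the whole ratio to a single expression $\bigl(\tfrac{2mt(2m/(3s)+L)}{(2m-2L)(2mt/(3s)-L)}\bigr)^{2sL}=(1+O_t(L/m))^{2sL}$, whereas you take logarithms of the two factors separately and exhibit the cancellation of the $\pm2sL\log t$ leading terms explicitly; both routes give the same bound.
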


\noindent{\bf Proof:}\, This calculation is tedious but very elementary. The probability $P'(N)$ depends on $N$ through the number $n_i(I_{i,j})$ of beads of type $i$ in the interval $I_{i,j}$ for $1\le i\le s$ and $j=1$ or $3$. We have $\binom{n_i(I_{i,j})}L$ choices to select seeds in the interval $I_{i,j}$ consistent with $N$. Thus, the probability of selecting all seeds consistent with $N$ is
$$\frac{\prod_{i=1}^s\binom{n_i(I_{i,1})}L\binom{n_i(I_{i,3})}L}{\binom{2mt/(3s)}L^{2s}}.$$
The distribution of non-seeds is uniform, so after such a consistent choice of the seeds we obtain $N$ with probability
$$\frac{(2m-2L)!^s(2m)!^{t-s}}{(2mt-2sL)!}.$$
For a random necklace in the distribution $D'$, the interval $I_{i,j}$ ($j=1$ or 3) contains $L$ seeds of type $i$, and the expected number of non-seeds of type $i$ is $(2m-2L)(2mt/(3s)-L)/(2mt-2sL)\le2m/(3s)-L/t$. As $N$ is normal, the actual value $n_i(I_{i,j})$ deviates from its expectation by less than $L/t$, so we have $n_i(I_{i,j})<2m/(3s)+L$. Using this estimate and the calculations above, we obtain
$$P'(N)\le\frac{\binom{2m/(3s)+L}L^{2s}((2m-2L)!^s(2m)!^{t-s}}{\binom{2mt/(3s)}L^{2s}(2mt-2sL)!}.$$
As $D$ is uniform, $P(N)$ does not depend on $N$:
$$P(N)=\frac{(2m)!^t}{(2mt)!}.$$
To estimate $P'(N)/P(N)$, we use the inequalities $(a-b)^b<\binom abb!\le a^b$ and obtain
$$\frac{P'(N)}{P(N)}\le\left(\frac{2mt(2m/(3s)+L)}{(2m-2L)(2mt/(3s)-L)}\right)^{2sL}.$$
Using $s\le t$ and $L\le m/2$, we can further estimate
$$\frac{2mt(2m/(3s)+L)}{(2m-2L)(2mt/(3s)-L)}\le 1+O_t\left(\frac Lm\right),$$
so we have
$$\frac{P'(N)}{P(N)}\le\left(1+O_t\left(\frac Lm\right)\right)^{2sL}=e^{O_t(L^2/m)},$$
as claimed\hfill $\Box$

\subsection{Random walks}\label{random walks}

In this subsection, we prove the upper bound in part (2) of
Theorem~\ref{thm:big m}. The following proposition is at the heart
of the argument. In this proposition we bound the probability that a
certain equation cannot be solved in the trajectories of independent
random walks. To prove the proposition we modify and generalize a
proof of Lawler~\cite{Lawler1}. In \cite{Lawler1} and \cite{Lawler2}
Lawler studied the probability that the traces of two independent random
walks on $\mathbb Z ^4$ are disjoint. See also \cite{erdos taylor2,ET}
for works of Erd\H{o}s and Taylor on the same problem.

Throughout this subsection, we fix $t\ge 1$ odd, $s=\frac{t+1}{2}$ and let
the $O$ notations depend on $t$. Recall that an infinite two-sided random
walk $W(n)$ is a sequence of random variables in some Euclidean space such
that $W(n+1)-W(n)$ for $n\in \mathbb Z $ are independent and identically
distributed. We say that the walk has a finite range if there is a
finite set $A$ for which $\mathbb P ( W(1)-W(0)\notin A )=0$. Finally,
the walk $W$ is called centered if $\mathbb E [W(1)-W(0)]=0$.

\begin{prop}\label{prop:Lawler}
	Let $\langle W_j(n) ,\ n\in \mathbb Z  \rangle$ for $j\le s$ be independent and identically distributed two-sided random walks on $\mathbb Z^{2s-2}$. Suppose that $W_j(0)=0$ for all $j$ and that the walks are centered and have a finite range. Then, there exists $C>0$ depending on $s$ and the step distribution of the walks such that for all $N\ge 2$,
	\begin{equation*}
	\mathbb P \bigg( \forall k=(k_1,\dots ,k_{s})\in A_N, \ \sum_{j=1}^{s} W_j(k_j) \neq 0 \bigg) \le \frac{C}{\log N},
	\end{equation*}
	where
	\begin{equation*}
	A_N:=\bigg\{ k\in \mathbb Z ^{s} : \sum _{j=1}^{s} k_j =0 \text{ and } k>_{\ell } 0 \bigg\} \cap [-N,N]^{s},
	\end{equation*}
	and where $>_{\ell } $ is the lexicographic order on $\mathbb Z ^{s}$.
\end{prop}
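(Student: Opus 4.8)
The plan is to run the Dvoretzky--Erd\H{o}s argument of Claim~\ref{p2} one dimension up. When $s=2$ the statement is exactly the classical fact (used in Claim~\ref{p2}) that a centered finite--range walk in $\mathbb Z^2$ fails to return to the origin in $N$ steps with probability $O(1/\log N)$, applied to the combined walk $a\mapsto W_1(a)+W_2(-a)$; so assume $s\ge 3$ (the case $s=1$ is degenerate). The new difficulty for $s\ge3$ is that the candidate solutions form an $(s-1)$--parameter family in a space of dimension $2s-2$, the ``critical'' dimension, and a bare second moment estimate \`a la Claim~\ref{12}(ii) only shows a solution exists with probability $\Omega(1)$; the sharper bound needs the renewal structure, following Lawler.

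\emph{Reduction and first moment.} Since $k$ is a solution iff $-k$ is, the lexicographic condition in $A_N$ only breaks a symmetry, and it suffices to find a solution in
$$
B_N=\Big\{k\in\mathbb Z^{s}:\ k_1,\dots,k_{s-1}\in[1,N/s],\ k_s=-\textstyle\sum_{j<s}k_j\Big\}\subseteq A_N .
$$
For $k\in B_N$, the relation $\sum_jW_j(k_j)=0$ says that the walk $\kappa\mapsto W_s(-\kappa)$ --- which has i.i.d.\ increments and is independent of $W_1,\dots,W_{s-1}$ --- meets, at time $\kappa=\sum_{j<s}k_j$, the random set $\Sigma_\kappa=\{-\sum_{j<s}W_j(k_j):k_j\ge1,\ \sum_{j<s}k_j=\kappa,\ k_j\le N/s\}$. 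Let $V_N$ count solutions in $B_N$. As $\sum_jW_j(k_j)$ is a sum of $\sum_j|k_j|$ i.i.d.\ centered steps in $\mathbb Z^{2s-2}$, the local central limit theorem (the estimate behind Claim~\ref{12}) gives $\mathbb P(k\text{ is a solution})=\Theta\big((\sum_j|k_j|)^{-(s-1)}\big)$, uniformly for $k$ in a fixed positive--density subfamily of $B_N$ on which the relevant periodicity/support conditions hold (restricting to such a subfamily costs only a constant factor). Since there are $\Theta(r^{s-1})$ vectors $k\in B_N$ with $\sum_j|k_j|$ of order $r$, summing over dyadic scales gives $\mathbb E(V_N)=\Theta(\log N)$, each scale contributing $\Theta(1)$.

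\emph{Regeneration.} Order $B_N$ by $k\preceq k'\iff k_j\le k'_j$ for $j<s$. If $k^{*}\preceq k$, then each increment $W_j(k_j)-W_j(k^{*}_j)$ is a sum of fresh i.i.d.\ steps, independent of $(W_j(k^{*}_j))_j$; hence, conditionally on $k^{*}$ being a solution, $k$ is a solution iff $\sum_j\big(W_j(k_j)-W_j(k^{*}_j)\big)=0$. Moreover, for small anchors ($|k^{*}|_\infty\le\sqrt N$) the set $\{k\in B_N:k\succeq k^{*}\}$ is, after subtracting $k^{*}$, a copy of the $B_{N/2}$--problem driven by fresh independent walks. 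Writing $Q(L)=\mathbb P(\text{no solution in }B_L)$, this yields
$$
\mathbb P\big(\text{no solution }\succeq k^{*}\ \big|\ k^{*}\text{ is a solution}\big)\le Q(N/2)\qquad(|k^{*}|_\infty\le\sqrt N),
$$
i.e.\ the conditional expected number of solutions $\succeq k^{*}$ is $\Omega(\log N)$. This is the exact analogue of the ``restart at a zero'' used in Claim~\ref{p2}.

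\emph{Conclusion, and the main obstacle.} From the first moment and the regeneration property one wants to deduce $1\ge\Omega(\log N)\cdot Q(N)$, hence $Q(N)=O(1/\log N)$, by mimicking the renewal identity $1=\sum_iP(i)Q(N-i)$ of Claim~\ref{p2} and the monotonicity $Q(N-i)\ge Q(N)$. Making this precise is the crux: here the $\preceq$--maximal solutions form an antichain, so there is no canonical ``last'' solution to decompose over. I expect one must instead run the renewal along a one--parameter clock extracted from the family --- say the common value $\kappa=|k_s|$ --- estimating the probability that the walk $\kappa\mapsto W_s(-\kappa)$ avoids the moving targets $\Sigma_\kappa$, and arguing by induction on $N$ while separating the contributions of solutions with small and with large anchor. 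This is the step where Lawler's analysis of non--intersecting random walks in $\mathbb Z^4$ must be modified and generalized; the reduction, first moment and regeneration above supply everything else.
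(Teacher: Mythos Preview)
You have correctly identified the obstacle, and your proposal is honest about being incomplete at precisely the step that matters. With your partial order $\preceq$ on $B_N$ there is indeed no last solution, and the renewal identity has no one-parameter backbone to run along; the suggestion to use the clock $\kappa=|k_s|$ does not help, because conditioning on $W_s(-\kappa)$ hitting $\Sigma_\kappa$ does not make the future of the \emph{other} walks fresh.

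The paper resolves this by breaking the symmetry between the walks rather than between the coordinates. One works with the lexicographic order on $A_N$, which \emph{is} total, so there is a well-defined maximal solution $K$. The key observation is that $k>_\ell 0$ forces $k_1\ge0$, so the event ``$K=k$'' separates the past of $W_1$ (up to time $k_1$) from its future, while saying nothing useful about the other walks. One therefore conditions on the entire trajectories of $W_2,\dots,W_s$ and runs the Dvoretzky--Erd\H{o}s renewal for $W_1$ alone: the event $\{K=k\}$ contains the intersection of (i) $W_1(k_1)=-\sum_{j\ge2}W_j(k_j)$, (ii) the fixed trajectories of $W_2,\dots,W_s$, and (iii) the avoidance event $\sum_jW_j(k_j+n_j)\ne0$ for all $n\in A_{2N}$, which depends only on increments of $W_1$ after time $k_1$. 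These are independent, and summing over $k$ gives $1\ge G(W_2,\dots,W_s)\cdot Q(2N)$ on the event that the conditioned version of your ``first moment'' quantity $G(\xi)=\sum_{k\in A_N}\mathbb P\big(W_1(k_1)=-\sum_{j\ge2}\xi_j(k_j)\big)$ is at least $c\log N$. The missing lemma is that $G(W_2,\dots,W_s)\ge c\log N$ holds except with probability $O(1/\log N)$; this is proved by a second-moment (Chebyshev) argument over dyadic scales, using that contributions from well-separated scales are nearly independent. Your first-moment computation is essentially the expectation half of that lemma.
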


We first show how to use Proposition~\ref{prop:Lawler} in order to prove the upper bound in part (2) of Theorem~\ref{thm:big m}.

\subsubsection{Proof of the upper bound in part (2) of Theorem~\ref{thm:big m}}
We parameterize a partition of the necklace with $s$ cuts by a vector of integers $i=(i_0, i_1,\dots ,i_s, i_{s+1})$ with
\begin{equation*}
0=i_0\le i_1\le \cdots \le i_s\le i_{s+1}=2mt.
\end{equation*}
This corresponds to a partition where the first thief gets beads $1$ to $i_1$, the second one gets beads $i_1+1$ to $i_2$ and so on.
Let $I$ be the set of balanced partitions. That is
\begin{equation*}
I:=\bigg\{ (i_0,i_1,\dots ,i_{s+1}) \ \Big| \  \sum _{j=0}^{s} (-1)^j \left( i_{j+1}-i_j \right) =0  \bigg\}.
\end{equation*}
For $n\le 2mt$, let $U(n) \in \mathbb N ^{t-1}$ be the random variable that, in the $j$'th coordinate, counts the number of beads of type $j$ out of the first $n$ beads. It is clear that $i=(i_0, i_1,\dots i_{s+1})\in I$ is fair if and only if
\begin{equation}\label{eq:U}
\sum _{j=0}^s (-1)^j (U(i_{j+1})-U(i_j))=0.
\end{equation}
Indeed, equation \eqref{eq:U} says that the first $t-1$ types are equally distributed between the thieves. Therefore, as each thief gets in total $mt$ beads, it follows that the last type must be equally distributed as well.

Next, let $Z$ be the number of fair partitions in $I$. It suffices to prove that
\begin{equation}\label{eq:what we need}
\mathbb P (Z>0) = O \bigg( \frac{1}{\log m} \bigg).
\end{equation}
To this end, define the sets of partitions
\begin{equation*}
I_1:=\left\{ i \in I \ \big| \  \forall 0\le j\le s, \ i_{j+1}-i_j > 2m^{\frac{1}{4}}  \right\}, \quad I_2:=I \setminus I_1,
\end{equation*}
and let $Z_2$ be the number of fair partitions in $I_2$.

We define a total order on $I$. For $i,i'\in I$, we write $i'\succ i$ if
\begin{equation*}
(i_1',-i_2',i_3',-i_4',\dots ) >_{\ell } (i_1,-i_2, i_3,-i_4,\dots ),
\end{equation*}
where $<_{\ell }$ is the lexicographic order on $\mathbb Z ^s$. For a partition $i\in I_1$, define the set
\begin{equation*}
B_i: =\left\{ i'\in I \ \big| \ i'\succ i \text{ and } \forall j \le s, \  |i_j-i_j'|\le m^{\frac{1}{4}} \right\}
\end{equation*}
and the event
\begin{equation*}
\mathcal B _i := \left\{ i \text{ is fair and } \forall i'\in B_i, \ i' \text{ is not fair } \right\}.
\end{equation*}
Finally, let
\begin{equation*}
Z_1:=\sum _{i\in I_1} \mathds 1 _{\mathcal B _i}.
\end{equation*}

We claim that
\begin{equation}\label{eq:Z_1 Z_2}
\{Z>0\} \subseteq \{ Z_1>0 \} \cup \{Z_2>0\}.
\end{equation}

Indeed, suppose that $Z>0$ and let $i\in I$ be the maximal fair partition
with respect to $\succ $. If $i \in I_2$ then $Z_2>0$ and therefore we
may assume that $i \in I_1$. Since $i$ is maximal, for all $i'\in B_i$,
$i'$ is not fair. Thus, $\mathcal B _i $ holds and $Z_1>0$.

We turn to bound the probabilities of the two events on the right-hand side of \eqref{eq:Z_1 Z_2}. It is easy to check that $|I_2| =O (m^{s-\frac{7}{4}})$  and therefore by Markov's inequality and Claim~\ref{12}(i),
\begin{equation}\label{eq:bound on Z_2}
\mathbb P (Z_2>0) \le \mathbb E (Z_2) =\sum _{i \in I_2 } \mathbb P (i \text{ is fair}) \le |I_2| \cdot O \big( m^{-\frac{t-1}{2}} \big) = O\big(  m^{-\frac{3}{4}} \big).
\end{equation}

Next, we bound $\mathbb P ( Z_1>0 )$. We have that
\begin{equation}\label{eq:conditional}
\mathbb P (\mathcal B _i) = O \big(  m^{-\frac{t-1}{2}} \big)   \cdot \mathbb P \left(  \forall i'\in B_i, \ i' \text{ is not fair } | \ i \text{ is fair} \right).
\end{equation}
We bound the last probability in the following claim.

\begin{claim}\label{claim:coupling}
	We have that
	\begin{equation}\label{eq:conditional probability}
	\mathbb P \left(  \forall i'\in B_i, \ i' \text{ is not fair } | \ i \text{ is fair} \right)= O \bigg( \frac{1}{\log m} \bigg).
	\end{equation}
\end{claim}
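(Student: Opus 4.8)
The plan is to reduce Claim~\ref{claim:coupling} to Proposition~\ref{prop:Lawler}. Conditioned on $i$ being fair, we must estimate the probability that no $i'\in B_i$ is fair; since $i'$ ranges over partitions obtained from $i$ by perturbing each cut $i_j$ by at most $m^{1/4}$, the new fairness equation \eqref{eq:U} for $i'$ differs from the (satisfied) one for $i$ by a sum of local increments of $U$ near each cut. Concretely, write $i'_j = i_j + k_j$ with $|k_j|\le m^{1/4}$; balancedness of $i'$ forces $\sum_j (-1)^j k_j = 0$ after re-indexing, and the condition $i'\succ i$ becomes the lexicographic positivity in the definition of $A_N$. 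The difference $U(i'_j)-U(i_j)$ is, up to sign, exactly the displacement over $k_j$ steps of a walk that records the type-composition of beads in a window of length $m^{1/4}$ around position $i_j$, and these $s$ windows are disjoint (as $i\in I_1$ has all gaps $>2m^{1/4}$) and — conditioned on the global fairness of $i$ — asymptotically independent. So $i'$ is fair iff $\sum_{j=1}^s (\pm 1) W_j(k_j) = 0$ for the appropriate choice of signs, where each $W_j$ is a centered, finite-range two-sided random walk on $\mathbb Z^{t-1} = \mathbb Z^{2s-2}$, and the range of admissible $k$ is exactly $A_N$ with $N = m^{1/4}$. Applying Proposition~\ref{prop:Lawler} then gives the bound $O(1/\log N) = O(1/\log m)$.

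The first step is the bookkeeping: absorb the alternating signs $(-1)^j$ into the walk increments so that the fairness condition for the perturbation takes the clean form $\sum_j W_j(k_j)=0$, and check that the constraint set on $(k_1,\dots,k_s)$ — namely $\sum k_j = 0$ together with the lexicographic condition coming from $i'\succ i$ — matches $A_N$ after a linear change of coordinates on the indices (the asymmetric signs in the definition of $\succ$, $(i_1',-i_2',i_3',\dots)$, are precisely engineered so that $i'\succ i$ translates to $k >_\ell 0$). One must also note that the "diagonal" $k=0$ corresponds to $i'=i$ itself, which is fair, so it is correctly excluded from $A_N$ by the condition $k>_\ell 0$; the point of the lexicographic ordering is exactly to make $B_i$ a "future" set and $Z_1$ a sum of disjointly-supported-in-time events.

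The main obstacle is the independence issue: the walks $W_j$ are not genuinely independent of each other nor of the event $\{i \text{ is fair}\}$, because all of them live inside the same necklace with a fixed global color census. This is the same "almost memoryless" phenomenon handled for $t=3$ in Claim~\ref{p2}: the windows around the cuts have total length $O(m^{1/4}) = o(m)$, so the multivariate hypergeometric distribution of colors in each window, even conditioned on the colors in all the other windows and on $i$ being fair, is within a factor $1+O(m^{1/4}/m)$ of the product of $t$ i.i.d. uniform color choices of the corresponding lengths. Hence one couples the true conditional law of the configuration on $\bigcup_j (\text{window around } i_j)$ with an idealized i.i.d. model in which the $W_j$ are exactly the independent finite-range centered walks required by Proposition~\ref{prop:Lawler}, losing only a constant factor in all relevant probabilities. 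The probability in \eqref{eq:conditional probability} is, up to this constant, the probability in Proposition~\ref{prop:Lawler} with $N = \lceil m^{1/4}\rceil \ge 2$, which is $O(1/\log m^{1/4}) = O(1/\log m)$. A minor additional point to verify is that the step distribution is genuinely $(2s-2)$-dimensional (non-degenerate), so the constant $C$ from the proposition is finite: this holds because the $t-1 = 2s-2$ coordinate color-counts are not affinely dependent once we have dropped the redundant $t$-th type.

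Combining \eqref{eq:conditional}, Claim~\ref{claim:coupling}, and a union bound over $i \in I_1$ (using $|I_1| = O(m^{s-1})$ and $\mathbb P(i \text{ fair}) = O(m^{-(t-1)/2})$ from Claim~\ref{12}(i), so $\mathbb E(Z_1) = O(m^{s-1-(t-1)/2}/\log m) = O(1/\log m)$ since $s = (t+1)/2$), we get $\mathbb P(Z_1>0) = O(1/\log m)$; together with \eqref{eq:bound on Z_2} and \eqref{eq:Z_1 Z_2} this yields \eqref{eq:what we need} and hence the upper bound in part (2) of Theorem~\ref{thm:big m}.
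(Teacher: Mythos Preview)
Your proposal is correct and follows essentially the same route as the paper's proof: translate the fairness condition for $i'\in B_i$ into $\sum_{j=1}^s W_j(k_j)=0$ for centered finite-range walks in $\mathbb Z^{t-1}=\mathbb Z^{2s-2}$, identify the perturbation set with $A_N$ for $N=m^{1/4}$ via the sign change $k_j=(-1)^{j+1}(i'_j-i_j)$, couple the conditional (hypergeometric) law on the disjoint windows with i.i.d.\ uniform colors at cost $1+O(m^{-1/2})$, and invoke Proposition~\ref{prop:Lawler}. The paper carries out exactly these steps, making the centering explicit via $\tilde W_j(n)=t(-1)^{j+1}(U(i_j+(-1)^{j+1}n)-U(i_j))-(n,\dots,n)$ and phrasing the comparison as a coupling with $\mathbb P(\forall j,\ \tilde W_j=W_j)=1-O(m^{-1/2})$ rather than a pointwise ratio bound, but this is the same argument. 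Your final paragraph is not part of the proof of the claim itself but correctly summarizes how the claim feeds into \eqref{eq:bound on Z_1}.
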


Using Claim~\ref{claim:coupling}, equation \eqref{eq:conditional} and Markov's inequality we get
\begin{equation}\label{eq:bound on Z_1}
\mathbb P (Z_1>0) \le \mathbb E (Z_1) =\sum _{i\in I_1} \mathbb P (\mathcal B _i) = O \bigg( \frac{m^{-\frac{t-1}{2}}}{\log m} \bigg) \cdot  |I_1|= O \bigg( \frac{1}{\log m} \bigg).
\end{equation}
Finally, \eqref{eq:what we need}  follows from \eqref{eq:bound on Z_1}, \eqref{eq:bound on Z_2} and \eqref{eq:Z_1 Z_2}. This finishes the proof of part (2) of Theorem~\ref{thm:big m}.
\medskip

It remains to prove Claim~\ref{claim:coupling}

\begin{proof}[\bf Proof of Claim~\ref{claim:coupling}:]
	
	Throughout this proof, we consider a uniform necklace $N$ with $2m$ beads of each type such that the partition $i$ is fair. As before, we denote by $U(n)\in \mathbb N ^{t-1}$ the counting vector. In this case, by \eqref{eq:U}, $i'\in I$ is fair if and only if
	\begin{equation}\label{eq:i, i'}
	\sum _{j=1}^{s} (-1)^{j+1} \left( U(i'_j)-U(i_j) \right) =0.
	\end{equation}
	
	For $1 \le j \le s$ and $|n|\le m^{\frac{1}{4}}$, define
	\begin{equation*}
	\tilde{W}_j(n):=t(-1)^{j+1} \left(  U(i_j+(-1)^{j+1}n)- U(i_j) \right) -(n,\dots ,n).
	\end{equation*}
	Using this notation and \eqref{eq:i, i'} we have that $i'\in B_i$ is fair if and only if
	\begin{equation*}
	\sum _{j=1}^s \tilde{W}_j\left( (-1)^{j+1}(i_j' -i_j) \right)=0.
	\end{equation*}
	Here, we also used that
	\begin{equation*}
	\sum _{j=1}^s (-1)^{j+1}(i_j'-i_j)=0,
	\end{equation*}
	which follows as $i,i'\in I$. Thus, letting
	\begin{equation*}
	A:= \left\{ \left( (-1)^{j+1} (i_j'-i_j) \right)_{j=1}^s \ | \ i'\in B_i  \right\} \subseteq \mathbb Z ^s,
	\end{equation*}
	we obtain that the probability in \eqref{eq:conditional probability} is given by
	\begin{equation*}
	\mathbb P \bigg(\forall k\in A, \ \sum _{j=1}^s \tilde{W}_j(k_j) \neq 0 \bigg).
	\end{equation*}
	Moreover, from the definition of $B_i$ it is clear that
	\begin{equation*}
	A=\bigg\{ k\in \mathbb Z ^s : \sum _{j=1}^{s} k_j =0 \text{ and } k>_{\ell } 0 \bigg\} \cap [-m^{\frac{1}{4}},m^{\frac{1}{4}}]^s.
	\end{equation*}
	
	We cannot use Proposition~\ref{prop:Lawler} yet, because the processes $\tilde{W}_j$ are not exactly independent random walks. We
will show that it is possible to couple them with random walks. To this end, let $Y$ be a random variable in $\mathbb Z ^{t-1}$ with distribution
	\begin{equation*}
	\forall j \le t-1 , \ \mathbb P (Y=t e_j-(1,\dots ,1))=\frac{1}{t} \quad \text{and} \quad \mathbb P (Y=-(1,\dots ,1))=\frac{1}{t}.
	\end{equation*}
	Let $W_j(n)$ for $j \le s$ and $|n|\le m^{\frac{1}{4}}$ be independent two-sided random walks on $\mathbb Z ^{t-1}$ with steps distributed like $Y$ and with $W_j(0)=0$. We claim that one can couple $\{W_j \}_{j=1}^s$ and $\{ \tilde{W}_j \}_{j=1}^s$ such that
	\begin{equation}\label{eq:couple W}
	\mathbb P \left( \forall j, \ \tilde{W}_j=W_j \right) = 1-O\big( m^{-\frac{1}{2}} \big) .
	\end{equation}
	
We give a sketch of proof for this fact. Consider a new necklace
	$N'$ of length $l = 2t\lfloor m^{\frac{1}{4}} \rfloor$ obtained from $N$ by concatenating
	intervals of beads of length $2 \lfloor m^{\frac{1}{4}} \rfloor$ around each of
	the cuts. We also let $N''$ be another necklace of the same length
	obtained by choosing independently and uniformly the type of each
	bead. It is not hard to see that \eqref{eq:couple W} follows
	from a coupling of $N'$ and $N''$ such that $\mathbb P (N'=
	N'')= 1-O (m^{-\frac{1}{2}})$. The first bead of $N'$ is clearly
	uniform. Conditioning on the first $m$ beads of $N'$, a simple
	counting argument shows that the probability that the next bead
	is of a certain type is $\frac{1}{t} +O(m^{-\frac{3}{4}})$. Thus,
	the probability to get any particular sequence of beads of length
	$l$ is
	\begin{equation*}
	\Big( \frac{1}{t} +O(m^{-\frac{3}{4}}) \Big) ^l =\frac{1}{t^l} \left( 1 +O(m^{-\frac{1}{2}}) \right) .
	\end{equation*}
	This shows that $N'$ and $N''$ can be coupled such that $\mathbb P (N'\neq N'') = O ( m^{-\frac{1}{2}} )$, which proves~\eqref{eq:couple W}.
	
	By \eqref{eq:couple W} and Proposition~\ref{prop:Lawler}, we obtain that
	\begin{equation*}
	\mathbb P \bigg(\forall k\in A, \ \sum _{j=1}^s \tilde{W}_j(k_j) \neq 0 \bigg) = O \big( m^{-\frac{1}{2}} \big)  +\mathbb P \bigg(\forall k\in A, \ \sum _{j=1}^s W_j(k_j) \neq 0 \bigg)= O \bigg( \frac{1}{\log m} \bigg).
	\end{equation*}
	This finishes the proof of Claim~\ref{claim:coupling}.
	
\end{proof}

\subsubsection{Proof of Proposition~\ref{prop:Lawler}}

The proof is somewhat similar to the proof of Claim~\ref{p2}. In the proof of Claim~\ref{p2}, we split the possible walks according to their last visit at the origin. Then, we use the Markov property to argue that the probability that the last visit is at time $k$ equals the probability that the walk returns to the origin at time $k$ times the probability that the walk avoids the origin in the remaining time.

Similarly, in the following proof, we consider the last integer vector $k$ (according to the lexicographic order) for which $\sum W_j(k_j)=0$. Then, we condition on the trajectories of all walks other then the first one and use the Markov property for the first walk. The proof becomes slightly more technical as we need to control probabilities conditioned on the last walks with high probability. Throughout the proof we let the $O$ notations depend on the step distribution of the random walk as well as on $s$.

Consider the set
\begin{equation*}
\mathcal E :=\bigg\{ \xi =(\xi _2,\dots ,\xi _{s}) \ \bigg| \  \xi _j:[-2N,2N]\cap \mathbb Z \to \mathbb Z^{2s-2}  \bigg\}.
\end{equation*}
We think of $\mathcal E $ as the set of trajectories of the walks $W_2,\dots ,W_s$. Let $K=(K_1,\dots ,K_{s} )$ be the maximal element $(k_1,\dots ,k_{s})\in A_N \cup \{0\} $ with respect to $>_{\ell } $ such that
\begin{equation*}
\sum _{j=1}^{s} W_j(k_j)=0.
\end{equation*}

For $\xi \in \mathcal E$ and $k\in A_N$ define the event
\begin{equation*}
\mathcal A (\xi ,k):=\{K=k\}\cap \bigcap _{j=2}^{s} \{\forall l\in \{ -2N,\dots , 2N\},  \  W_j(k_j+l)-W_j(k_j)=\xi _j(l) \}.
\end{equation*}
It is clear that these events are disjoint and that
\begin{equation}\label{eq:supset}
\begin{split}
\mathcal A (\xi ,k) &\supseteq  \bigg\{ W_1(k_1)=\sum _{j=2}^{s} \xi _j(-k_j) \bigg\} \\
&\cap \bigcap _{j=2}^{s} \Big\{ \forall l\in \{ -2N,\dots ,2N\}, \  W_j(k_j+l)-W_j(k_j)=\xi _j(l) \Big\} \\
&\cap \bigg\{ \forall n \in A_{2N} ,\ W_1(k_1+n_1)-W_1(k_1)+\sum _{j=2}^{s } \xi _j(n_j)\neq 0  \bigg\}.
\end{split}
\end{equation}
All the s+1 events whose intersection gives the right-hand side of \eqref{eq:supset} are independent (the first and last events are independent as $k_1,n_1\ge 0$) and, therefore,
\begin{equation}\label{eq:lower bound on probability}
\begin{split}
\mathbb P \left( \mathcal A (\xi ,k) \right) \ge P(\xi ) &\cdot  \mathbb P \bigg(  W_1(k_1)=\sum _{j=2}^{s} \xi _j(-k_j) \bigg)\\
&\cdot \mathbb P \bigg(  \forall n \in A_{2N} ,\ W_1(n_1)+\sum _{j=2}^{s } \xi _j(n_j)\neq 0 \bigg),
\end{split}
\end{equation}
where
\begin{equation*}
P(\xi ):= \prod _{j=2}^{s} \mathbb P \Big( \forall l\in \{-2N,\dots ,2N\}, \  W_j(l)=\xi _j(l) \Big).
\end{equation*}

Next, for $\xi \in \mathcal E$ define the function
\begin{equation*}
G(\xi ):= \sum _{k \in A_N} \mathbb P \bigg(  W_1(k_1)=\sum _{j=2}^{s} \xi _j(-k_j) \bigg).
\end{equation*}

The following lemma shows that $G$ is typically of the order of $\Omega ( \log N )$.

\begin{lemma}\label{lem:upper bound on probability}
	There exists $c >0$ such that
	\begin{equation*}
	   \mathbb P \big( G(W_2,\dots ,W_s) <c \log N  \big) = O \bigg( \frac{1}{\log N} \bigg).
	\end{equation*}
\end{lemma}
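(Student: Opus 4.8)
The plan is to estimate the first two moments of $G=G(W_2,\dots ,W_s)$ and then finish by Chebyshev's inequality. Write $p_n(x):=\mathbb P(W_1(n)=x)$ and $g_k:=p_{k_1}\big(\sum_{j=2}^s W_j(-k_j)\big)$, so that $G=\sum_{k\in A_N}g_k$ and $g_k$ depends only on the positions $\big(W_j(-k_j)\big)_{j\ge 2}$. By independence of the $s$ walks, $\mathbb E[g_k]=\mathbb P\big(W_1(k_1)=\sum_{j\ge2}W_j(-k_j)\big)$, and $W_1(k_1)-\sum_{j\ge2}W_j(-k_j)$ is a sum of $|k|_1:=\sum_{j=1}^s|k_j|$ i.i.d.\ centered finite-range increments in $\mathbb Z^{2s-2}$. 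A local central limit theorem for this walk (on the sublattice generated by the increments and on the correct residue class; here the relation $\sum_jk_j=0$ conveniently places $0$ in the support) gives $\mathbb E[g_k]=\Theta\big(|k|_1^{-(s-1)}\big)$ for all $k\in A_N$ with $|k|_1$ large. Since $\#\{k\in A_N:|k|_1\asymp M\}=\Theta(M^{s-1})$ for dyadic $M\le N$, summing over $k$ gives $\mathbb E[G]=\Theta\big(\sum_{M\ \mathrm{dyadic}\le N}1\big)=\Theta(\log N)$.

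For the variance I would introduce an independent copy $W_1'$ of $W_1$; independence gives, for $k,k'\in A_N$,
\[
\mathbb E[g_kg_{k'}]=\mathbb P\Big(W_1(k_1)=\textstyle\sum_{j\ge2}W_j(-k_j),\ \ W_1'(k_1')=\textstyle\sum_{j\ge2}W_j(-k_j')\Big),
\]
the probability that a pair of linear statistics of the trajectories vanishes simultaneously; this pair is a walk in $\mathbb Z^{2(2s-2)}$ with covariance $A\otimes\Sigma$, where $\Sigma$ is the increment covariance and $A=\left(\begin{smallmatrix}|k|_1& r\\ r& |k'|_1\end{smallmatrix}\right)$ with $r=\sum_{j\ge2:\,k_jk_j'>0}\min(|k_j|,|k_j'|)$ the total overlap of the $W_j$-segments used by the two constraints. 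The two-sided local limit theorem then gives $\mathbb E[g_kg_{k'}]=O\big((\det A)^{-(s-1)}\big)$ whenever $k_1,k_1'\ge1$, while $\mathbb E[g_k]\,\mathbb E[g_{k'}]=\Theta\big((|k|_1|k'|_1)^{-(s-1)}\big)$. One then splits $\mathrm{Var}(G)=\sum_{k,k'}\mathrm{Cov}(g_k,g_{k'})$ by the size of $L:=\|k-k'\|_1$: for $r^2\le\tfrac12|k|_1|k'|_1$ one expands $(|k|_1|k'|_1-r^2)^{-(s-1)}-(|k|_1|k'|_1)^{-(s-1)}=O\big(r^2(|k|_1|k'|_1)^{-s}\big)$ and gets a convergent contribution, and for $r^2>\tfrac12|k|_1|k'|_1$ one uses the elementary determinant bound $\det A\gg|k|_1(k_1+L)$ together with the crude $\mathrm{Cov}\le\mathbb E[g_kg_{k'}]$. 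The dominant piece is then $\sum_{k}|k|_1^{-(s-1)}\sum_{L\ge1}O\big(L^{s-2}(k_1+L)^{-(s-1)}\big)=\sum_{k}|k|_1^{-(s-1)}O\big(\log(|k|_1/k_1)\big)$; grouping by $|k|_1\asymp M$ and $k_1=a$ (there being $\Theta(M^{s-2})$ such $k$) and using $\sum_{a=1}^{M}\log(M/a)=\log(M^M/M!)=O(M)$, this collapses to $O\big(\sum_{M\ \mathrm{dyadic}}1\big)=O(\log N)$. The diagonal is $\sum_k\mathbb E[g_k^2]\le\sum_k(\max_xp_{k_1}(x))\,\mathbb E[g_k]=O\big(\sum_k|k|_1^{-2(s-1)}\big)=O(1)$, and the (lower-dimensional) sets $k_1=0$ or $k_1'=0$ are negligible. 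Hence $\mathrm{Var}(G)=O(\log N)$.

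Combining the two estimates, $\mathbb E[G]\ge c'\log N$ for some $c'>0$ and $\mathrm{Var}(G)=O(\log N)$, so with $c:=c'/2$ Chebyshev's inequality gives
\[
\mathbb P\big(G(W_2,\dots ,W_s)<c\log N\big)\ \le\ \mathbb P\Big(|G-\mathbb E[G]|\ge\tfrac12\mathbb E[G]\Big)\ \le\ \frac{4\,\mathrm{Var}(G)}{(\mathbb E[G])^2}\ =\ O\Big(\frac1{\log N}\Big),
\]
which is the assertion of the lemma.

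The main obstacle is the variance bound, and two points need care. First, the walks need not be aperiodic on all of $\mathbb Z^{2s-2}$, so the local limit estimates must be stated on the correct sublattice and residue class (or the returns averaged over a bounded window of times). Second, the naive estimate of the covariance sum is only $O(\log^2N)$; reducing it to $O(\log N)$ requires both the first-order cancellation in $(|k|_1|k'|_1-r^2)^{-(s-1)}-(|k|_1|k'|_1)^{-(s-1)}$ and the arithmetic identity $\sum_{a\le M}\log(M/a)=O(M)$. A more robust variant, which sidesteps the latter delicacy, keeps in $G$ only the terms with $k_1=a$ and $\max_j|k_j|\le a$ — contributing $\Theta(a^{-(s-1)})\,T_a$ with $T_a:=\#\{\text{admissible such }k:\ |\sum_{j\ge2}W_j(-k_j)|\le\sqrt a\}$ — shows $\mathbb E[T_a]=\Theta(a^{s-1})$ and $\mathrm{Var}(T_a)=O(\mathbb E[T_a]^2/a)$, applies Chebyshev to each $T_a$ with $a$ a dyadic integer in $[\log^2N,\,N]$, and union-bounds over these $\Theta(\log N)$ scales, discarding the $O(\log\log N)$ smallest ones.
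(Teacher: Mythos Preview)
Your overall strategy---compute the first two moments and apply Chebyshev---is exactly the paper's. The substantive difference is \emph{which} random variable you take moments of. The paper does not compute $\mathrm{Var}(G)$ directly; instead it first bounds $G$ from below by a simpler quantity
\[
X \ =\ \sum_{p=1}^{\log N} 2^{-p(s-1)} \sum_{k_2,\dots,k_s\in[2^p,2^{p+1})}\ \prod_{j=2}^s \mathbf 1\{\|W_j(k_j)\|\le\sqrt{k_j}\},
\]
and then shows $\mathbb E[X]=\Omega(\log N)$ and $\mathrm{Var}(X)=O(\log N)$. The variance computation then only requires controlling the correlation between the indicators $\mathbf 1\{\|W_j(k)\|\le\sqrt k\}$ and $\mathbf 1\{\|W_j(l)\|\le\sqrt l\}$ at two dyadic scales, which is handled by a single Berry--Esseen bound. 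This sidesteps entirely the local-CLT cancellation you need.

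Your direct route can be made to work, but the variance bound is genuinely more delicate than you indicate. The expansion $(\det A)^{-(s-1)}-(|k|_1|k'|_1)^{-(s-1)}=O(r^2(|k|_1|k'|_1)^{-s})$ only controls the \emph{leading} Gaussian terms; to turn this into a bound on $\mathrm{Cov}(g_k,g_{k'})$ you need the error terms in the two local limit theorems (for $\mathbb E[g_kg_{k'}]$ and for the product $\mathbb E[g_k]\mathbb E[g_{k'}]$) to themselves sum to $O(\log N)$. The joint process $(W_1(k_1)-\sum W_j(-k_j),\,W_1'(k_1')-\sum W_j(-k_j'))$ is not a standard random walk---different increments hit the two components differently---so invoking a ``two-sided local limit theorem'' with a clean error term requires justification you have not given.

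Your alternative at the end is essentially the paper's idea, but as written contains an arithmetic slip: with $k_1=a$ \emph{fixed} and $\sum_jk_j=0$, there are only $\Theta(a^{s-2})$ choices for $(k_2,\dots,k_s)$, not $\Theta(a^{s-1})$, so each dyadic $a$ contributes $\Theta(1/a)$ rather than $\Theta(1)$ to the lower bound for $G$. Replacing ``$k_1=a$'' by ``all $|k_j|$ in the dyadic window $[2^p,2^{p+1})$'' (as the paper does) restores the correct count and makes the argument go through; with that fix your alternative and the paper's proof are essentially the same.
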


We postpone the proof of Lemma~\ref{lem:upper bound on probability} and proceed with the proof of Proposition~\ref{prop:Lawler}. To this end, define the set
\begin{equation*}
\mathcal G   :=\Big\{ \xi \in \mathcal E   : G(\xi ) \ge c \log N \Big\}.
\end{equation*}
Summing the inequality in \eqref{eq:lower bound on probability} over $\xi \in \mathcal G   $ and $k \in A_N$, we get
\begin{equation*}
\begin{split}
1\ge \mathbb P &\bigg( \bigcup _{\xi \in \mathcal G   } \bigcup _{k \in A_N} \mathcal A (\xi ,k) \bigg)=\sum _{\xi \in \mathcal G   } \sum _{k \in A_N} \mathbb P \left( \mathcal A (\xi ,k) \right) \\
&\ge \sum _{\xi \in \mathcal G   } P(\xi ) \cdot G(\xi ) \cdot  \mathbb P \bigg(  \forall n \in A_{2N} ,\ W_1(n_1)+\sum _{j=2}^{s } \xi _j(n_j)\neq 0 \bigg) \\
&=  \Omega ( \log N ) \sum _{\xi \in \mathcal G   } P(\xi )  \cdot  \mathbb P \bigg(  \forall n \in A_{2N} ,\ W_1(n_1)+\sum _{j=2}^{s } \xi _j(n_j)\neq 0 \bigg) \\
&=\Omega ( \log N ) \cdot   \mathbb P \bigg( (W_2,\dots , W_{s}) \in \mathcal G   \ \text{ and } \ \forall n \in A_{2N} , \  \sum _{j=1}^{s } W_j (n_j)\neq 0 \bigg)\\
&= \Omega ( \log N ) \cdot   \mathbb P \bigg(   \ \forall n \in A_{2N} , \  \sum _{j=1}^{s } W_j (n_j)\neq 0 \bigg) -O(1).
\end{split}
\end{equation*}
where the last equality follows from Lemma~\ref{lem:upper bound on probability}. Thus,
\begin{equation}
\mathbb P \bigg(   \ \forall n \in A_{2N} , \  \sum _{j=1}^{s } W_j (n_j)\neq 0 \bigg) = O \bigg( \frac{1}{\log N} \bigg).
\end{equation}
This completes the proof of Proposition~\ref{prop:Lawler}.

We turn to prove Lemma~\ref{lem:upper bound on probability}. For the
proof we need the following standard claim on integer valued random walks.

\begin{claim}\label{claim:on random walk}
	Let $W(n)$ be a centered, finite range random walk on $\mathbb Z ^d$ with $W(0)=0$. Suppose that the covariance matrix of $W(1)$ is $\Sigma $ and that $\Sigma $ is nonsingular. Then
	\begin{enumerate}
		\item
		For all $n\ge 1$,  $k \in \mathbb Z ^d$ and $a>0$ such that $||k|| \le a\sqrt{n}$ and $\mathbb P (W(n)=k)>0$, we have that
		\begin{equation*}
		\mathbb P (W(n)=k) = \Omega  _a \big( n^{-\frac{d}{2}} \big).
		\end{equation*}
		\item
		For all $n\ge 1 $ and $r>0$, we have that
		\begin{equation*}
		\left| \mathbb P (|| W(n) ||\le r\sqrt{n} ) - \mathbb P (||Z||\le r  )  \right| = O \Big(  \frac{1}{\sqrt{n}} \Big),
		\end{equation*}
		where $Z\sim N(0,\Sigma )$.
	\end{enumerate}
\end{claim}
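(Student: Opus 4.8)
The plan is to prove Claim~\ref{claim:on random walk}, the local central limit theorem statement needed for Lemma~\ref{lem:upper bound on probability}. Both parts are instances of the classical local central limit theorem (LCLT) for lattice random walks, so the strategy is to invoke the standard machinery rather than reinvent it; the only real work is attending to the lattice (aperiodicity) subtleties and quantifying the error.

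For part (1), I would argue as follows. Since $W$ has finite range and is centered with nonsingular covariance $\Sigma$, the classical LCLT (see Spitzer, or Lawler--Limic \emph{Random Walk: A Modern Introduction}, Theorem 2.1.1) states that, uniformly over $k$,
\begin{equation*}
\mathbb P(W(n)=k) = \frac{\kappa}{(2\pi n)^{d/2}\sqrt{\det\Sigma}} \exp\!\Big(-\frac{k^{\mathsf T}\Sigma^{-1}k}{2n}\Big) + o(n^{-d/2}),
\end{equation*}
where $\kappa$ accounts for the period of the lattice generated by the support of the steps (and the coset structure). The point is that once $\|k\|\le a\sqrt n$, the Gaussian exponent is bounded below by a constant $e^{-a^2/(2\lambda_{\min}(\Sigma))}$ depending only on $a$ and $\Sigma$, so the main term is $\Omega_a(n^{-d/2})$. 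The hypothesis $\mathbb P(W(n)=k)>0$ is exactly what is needed to rule out the degenerate case where $k$ lies in a ``forbidden'' coset and the main term, though positive, is never actually attained; when the probability is positive, the error term cannot swamp the main term, and one concludes $\mathbb P(W(n)=k)=\Omega_a(n^{-d/2})$. One small point to handle: for small $n$ (below a threshold depending on $a$ and $\Sigma$) the asymptotic has not kicked in, but there the bound is trivial since a single reachable point has probability bounded below by a constant depending on the step distribution and $n$, which is absorbed into the $\Omega_a$.

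For part (2), I would use the same LCLT, now in its integrated (Berry--Esseen type) form. Summing the pointwise LCLT over all $k$ with $\|k\|\le r\sqrt n$, the sum of the Gaussian main terms is a Riemann sum for $\mathbb P(\|Z\|\le r)$ with $Z\sim N(0,\Sigma)$, and the Riemann-sum error together with the LCLT error term is $O(1/\sqrt n)$ — here finite range gives us uniform third moments, which is what pins the rate at $1/\sqrt n$ rather than something slower. Equivalently, one can quote the multivariate Berry--Esseen theorem directly for the event $\{\|W(n)/\sqrt n\|\le r\}$; since this event is defined by a (smooth, convex) ball and $W$ has finite range hence bounded steps, the Berry--Esseen bound gives $|\mathbb P(\|W(n)\|\le r\sqrt n)-\mathbb P(\|Z\|\le r)|=O(n^{-1/2})$ with the constant depending on $\Sigma$ and the step distribution.

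The main obstacle I anticipate is purely bookkeeping around periodicity: a finite-range walk need not be aperiodic on all of $\mathbb Z^d$ (its steps may generate a proper sublattice or live in a nontrivial coset), so the clean LCLT applies only after restricting to that sublattice, and the constant $\kappa$ above must be tracked. For part (1) this is harmless because we only need a lower bound and we have assumed $\mathbb P(W(n)=k)>0$, which forces $k$ into the reachable coset; for part (2) periodicity only rescales counting measure by a constant factor that matches the normalization, so the $O(n^{-1/2})$ rate is unaffected. Since the paper applies this claim to the walk $Y$ (whose covariance is easily checked to be nonsingular — it is a scalar multiple of $tI - J$ on the hyperplane, or rather its image under a coordinate projection), and analogous coupled walks, everything downstream needs only these two statements with constants allowed to depend on the step distribution, exactly as stated. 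I would therefore present part (1) and part (2) each as a short deduction from the cited LCLT / Berry--Esseen theorem, flagging the periodicity convention once at the start.
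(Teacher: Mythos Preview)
Your proposal is correct and takes essentially the same approach as the paper: both parts are deduced directly from standard references (the paper cites Rai\v{c}'s multivariate Berry--Esseen theorem for part~(2) and Trojan's LCLT for part~(1)), with the only substantive step being the reduction to the irreducible/aperiodic case. The paper handles that reduction by observing that the reachable set $\Lambda=\{k:\exists n,\ \mathbb P(W(n)=k)>0\}$ is a full-rank lattice (centeredness is used here) and applying a linear change of coordinates to make the walk irreducible on $\mathbb Z^d$, whereas you absorb this into a periodicity constant $\kappa$ and the hypothesis $\mathbb P(W(n)=k)>0$; these are equivalent ways of saying the same thing.
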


\begin{proof}[\bf Proof:]
	The second part follows from \cite[Theorem~1.1]{Berry esseen}.
	
	Now we prove the first part. We start by showing that without loss of generality, $W$ is irreducible. Indeed, otherwise consider the set
	\begin{equation*}
	\Lambda := \{ k\in \mathbb Z ^d :  \exists n \ge 0, \ \mathbb P (W(n)=k)>0 \}.
	\end{equation*}
	For a general walk, $\Lambda $ is a semigroup. However, since $W$ is centered, it is not hard to check that $\Lambda $ is in fact a lattice. If we let $T$ be a linear transformation that maps $\Lambda $ to $\mathbb Z ^d$ then the new walk $T(W)$ is an irreducible walk. Now, we can use \cite[Theorem~3.1]{LLT}  that determines the asymptotic behavior of $\mathbb P (W(n)=k)$ for an irreducible (possibly periodic) walk. See also the work of Polya on the simple random walk \cite{polya}.
\end{proof}

\begin{proof}[\bf Proof of Lemma~\ref{lem:upper bound on probability}:]
	Fix $k_2,\dots ,k_{s}$ such that $-N/s \le k_2,\dots ,k_s < 0$ and let $k_1:=-k_2-\cdots -k_{s}$. It is clear that $k=(k_1,\dots ,k_{s})\in A_N$. On the event
	\begin{equation*}
	\mathcal B := \Big\{ \forall 2\le j \le s , \ ||W_j( -k_j )||\le \sqrt{|k_j|}  \Big\},
	\end{equation*}
	we have that
	\begin{equation*}
	\bigg| \bigg| \sum _{j=2}^{s} W_j(-k_j) \bigg| \bigg| \le  \sqrt{|k_2|}+\cdots +\sqrt{|k_{s}|} \le s \sqrt{k_1}
	\end{equation*}
	and therefore, by the first part of Claim~\ref{claim:on random walk}, on $\mathcal B$ we have
	\begin{equation*}
	\mathbb P \bigg( W_1(k_1) = \sum _{j=2}^{s} W_j(-k_j) \ \bigg| \ W_2,\dots W_{s}\bigg) = \Omega \Big(  \frac{1}{k_1^{s-1}} \Big).
	\end{equation*}
	Note that in order to use Claim~\ref{claim:on random walk} we have to first verify that the last probability is positive. This is indeed the case since, with positive probability, the walk $W_1$ takes the same $-k_2$ steps as $W_2$, then the same $-k_3$ steps as $W_3$ and so on. We obtain that
	\begin{equation}\label{eq:lower bound on G}
	\begin{split}
	    G(W_2,\dots ,W_{s} ) &\ge \Omega(1) \sum _{k_2=1}^{N/s} \cdots \sum _{ k_{s} =1}^{N/s} \frac{1}{\big( k_2+\cdots +k_s\big)^{s-1}} \prod _{j=2}^{s}\mathds 1 \big\{ || W_j(k_j) ||\le \sqrt{k_j} \big\}\\
	    &\ge \Omega (1) \sum _{p=1} ^{\log N} 2^{-p(s-1)} \sum _{k_2=2^p}^{2^{p+1}-1} \cdots  \sum _{k_s=2^p}^{2^{p+1}-1} \prod _{j=2}^{s}\mathds 1 \big\{ || W_j(k_j) ||\le \sqrt{k_j} \big\}.
	\end{split}
	\end{equation}
	Denote by $X$ the sum on the right-hand side of \eqref{eq:lower bound on G}. Intuitively, $X$ is of order $\log N$ since each dyadic scale $p$ contributes order $1$ to the sum and far away scales are weakly correlated. We make this heuristic rigorous using the second moment method.
	
	By the central limit theorem we have that
	\begin{equation*}
	\mathbb E (X) =  \sum _{p=1} ^{\log N} 2^{-p(s-1)} \sum _{k_2=2^p}^{2^{p+1}-1} \cdots  \sum _{k_s=2^p}^{2^{p+1}-1} \Omega(1)  = \Omega ( \log N ).
	\end{equation*}
	We turn to bound the variance of $X$. We have that
	\begin{equation*}
	\begin{split}
	&\var (X ) \le \sum _{p=1}^{\log N} \sum _{q=1}^{\log N}  2^{-(p+q)(s-1)} \sum _{k_2=2^p}^{2^{p+1}}  \cdots \sum _{k_{s}=2^p}^{2^{p+1}} \sum _{l_{2}=2^q}^{2^{q+1}} \cdots  \sum _{l_{s}=2^q}^{2^{q+1}}   \prod _{j=2} ^{s} \\
	&\Big(  \mathbb P \left(||W_j(k_j)||\le \sqrt{k_j}, \ ||W_j(l_j)||\le \sqrt{l_j} \ \right) -\mathbb P \left( ||W_j(k_j)||\le \sqrt{k_j} \right)\mathbb P \left( ||W_j(l_j)||\le \sqrt{l_j} \right) \Big).
	\end{split}
	\end{equation*}
	For all $j$ with $k_j\le l_j$, we have that
	\begin{equation}\label{eq:corelation of walk}
	\begin{split}
	&\mathbb P \left(||W_j(k_j)||\le \sqrt{k_j}, \ ||W_j(l_j)||\le \sqrt{l_j} \ \right) \\
	& \quad \quad \quad \quad \quad \quad  \quad  \le \mathbb P \left(||W_j(k_j)||\le \sqrt{k_j}, \ ||W_j(l_j)-W_j(k_j)||\le \sqrt{l_j}+\sqrt{k_j} \ \right)\\
	& \quad \quad \quad \quad \quad \quad \quad = \mathbb P \left(||W_j(k_j)||\le \sqrt{k_j} \ \right) \mathbb P \left(  ||W_j(l_j-k_j) ||\le \sqrt{l_j}+\sqrt{k_j} \right).
	\end{split}
	\end{equation}
	When $l_j \ge 2 k_j$, let
	\begin{equation*}
	r:= \frac{\sqrt{l_j}+\sqrt{k_j}} {\sqrt{l_j-k_j} }= 1+O\bigg(\frac{\sqrt{k_j}}{\sqrt{l_j}}\bigg).
	\end{equation*}
	By the second part of Claim~\ref{claim:on random walk} for $l_j \ge 2 k_j$,
	\begin{equation}\label{eq:use the Berry Esseen}
	\begin{split}
	\mathbb P \left(  ||W_j(l_j-k_j) ||\le \sqrt{l_j}+\sqrt{k_j} \right)=\mathbb P &\left( ||Z||\le r \right) +O\bigg(\frac{1}{\sqrt{l_j}}\bigg)\\
	=\mathbb P (||Z||\le 1) +O\bigg(\frac{\sqrt{k_j}}{\sqrt{l_j}}\bigg) &=\mathbb P \left(  ||W_j(l_j) ||\le \sqrt{l_j} \right) +O\bigg(\frac{\sqrt{k_j}}{\sqrt{l_j}}\bigg).
	\end{split}
	\end{equation}
	It is clear that the probability in the left-hand side of \eqref{eq:use the Berry Esseen} is estimated by the right-hand side of \eqref{eq:use the Berry Esseen} when $k_j \le l_j\le 2k_j$ and therefore it holds whenever $l_j\ge k_j$. Substituting this estimate into \eqref{eq:corelation of walk} and using the same arguments when $l_j \le k_j$, we get that for all $k_j,l_j$,
	\begin{equation*}
	\begin{split}
	\mathbb P \left(||W_j(k_j)||\le \sqrt{k_j}, \ ||W_j(l_j)||\le \sqrt{l_j} \ \right) & \\
	\le \mathbb P \left(||W_j(k_j)||\le \sqrt{k_j} \right) &\mathbb P \left(  ||W_j(l_j)||\le \sqrt{l_j} \ \right) + O \bigg( \frac{\sqrt{\min (l_j,k_j)}}{\sqrt{\max (l_j,k_j)}} \bigg) .
	\end{split}
	\end{equation*}
	Thus,
	\begin{equation}\label{eq:variance annoying sum}
	\begin{split}
	\var (X ) &\le O(1) \sum _{p=1}^{\log N} \sum _{q=p}^{\log N} 2^{-(p+q)(s-1)} \sum _{k_2=2^p}^{2^{p+1}}  \cdots \sum _{k_{s}=2^p}^{2^{p+1}} \sum _{l_{2}=2^q}^{2^{q+1}} \cdots  \sum _{l_{s}=2^q}^{2^{q+1}} 2^{(p-q)(s-1)/2} \\
	&\le O(1)\sum _{p=1}^{\log N} 2^{p(s-1)/2} \sum _{q=p}^{\log N}  2^{-q(s-1)/2} \le O(1)\sum _{p=1}^{\log N} 1 =O(\log N).
	\end{split}
	\end{equation}
	Finally, by Chebyshev's inequality there exists $c>0$ such that
	\begin{equation*}
	\mathbb P (X<c \log N) = O \bigg(  \frac{1}{\log N} \bigg) .
	\end{equation*}
	This finishes the proof of the lemma using \eqref{eq:lower bound on G}.
	\end{proof}

\section{The case $m=1$, many types}\label{section3}

In this section, we prove Theorems \ref{t14} and \ref{t15}
in which the number of beads of each type is equal to the number of
thieves and each collection of intervals should contain exactly one
bead of each type. In order to prove Theorem \ref{t14}, we need a
hypergraph edge-coloring result of Pippenger and Spencer \cite{PS}.

First, we recall the terminology. A \emph{hypergraph} is a pair $H=(V,E)$, where $V$ is the set of \emph{vertices} and $E$ is a multiset consisting of subsets of vertices called the \emph{edges}. We only consider here finite hypergraphs. We say that $H$ is \emph{$C$-uniform} if every edge contains $C$ vertices. The \emph{degree} $d_H(v)$ of a vertex $v\in V$ is the number edges containing $v$. If all the vertices have the same degree $k$, we call $H$ \emph{$k$-regular}. The \emph{codegree} of two distinct vertices is the number of edges which contain both of them. The \emph {maximum degree} and \emph{maximum codegree} of the hypergraph $H$ is the maximum degree of a vertex of $H$, and the maximum codegree of two distinct vertices in $H$, respectively. A set of pairwise disjoint edges is called a \emph{matching}.

\begin{theo}[\cite{PS}]
\label{t41}
For every integer $C \geq 2$  and every $\eps>0$, there is
$\delta>0$ such that the following statement holds.

For every positive integer $k$ and any $C$-uniform hypergraph $H$ with maximum degree at most $k$ and maximum codegree at most $\delta k$, the edges of $H$ can be partitioned into at most $(1+\eps)k$ matchings.
\end{theo}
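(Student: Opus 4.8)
\medskip
\noindent
The plan is to follow the standard semi-random (R\"odl nibble) proof of this theorem of Pippenger and Spencer.

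First I would dispose of small $k$: once $\delta=\delta(C,\eps)$ is chosen small enough, every $k$ below a suitable threshold $k_0(C,\eps)$ satisfies $\delta k<1$, so the maximum codegree is $0$; as $C\ge2$ forces every edge to contain a pair of vertices, this means $E(H)=\emptyset$ and $0$ matchings suffice. So from now on $k$ is large in terms of $C$ and $\eps$. A standard preliminary step then reduces matters to the case where $H$ is close to $k$-regular --- pad with dummy edges on fresh private vertices (this does not raise the maximum codegree beyond $\delta k$, and a matching decomposition of the padded hypergraph restricts to one of $H$ once the dummy edges are stripped).

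The engine will be the classical \emph{nibble lemma}: for every $C$ and $\eps'>0$ there is a constant $\tau=\tau(C,\eps')>0$ such that any $C$-uniform hypergraph that is nearly $D$-regular with $D$ large and maximum codegree at most $\tau D$ has a matching covering all but an $\eps'$-fraction of its vertices, with the uncovered set suitably ``spread out''. To prove it I would select each edge independently with probability $\gamma/D$ for a small constant $\gamma$, keep a selected edge iff it meets no other selected edge, and repeat this a bounded number of times, showing at each sub-stage that the number of newly matched edges through each vertex is close to its mean \emph{simultaneously for all vertices}. The bounded-codegree hypothesis is exactly what makes this work: it keeps the local neighbourhood of a vertex non-clustered, so a Talagrand-type concentration inequality (or an Azuma--Hoeffding martingale on a suitable exposure of the random choices) controls the relevant counts; a large codegree would let a single random choice flip the status of many edges at a vertex and destroy the concentration. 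I expect this to be the main obstacle: propagating the accumulating $o(1)$ error terms through the sub-stages, verifying that near-regularity and the codegree bound survive, and choosing $\tau$, $\gamma$ and the number of sub-stages consistently --- the bookkeeping that makes this a genuine black box rather than a short exercise.

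Granting the lemma, I would iterate it: build a chain $H=H_0\supseteq H_1\supseteq\cdots$, where $H_{i+1}$ is $H_i$ with a matching $M_{i+1}$ from the lemma deleted (the lemma applied with a small coverage parameter $\eps'=\eps'(C,\eps)$), and assign each $M_{i+1}$ a fresh colour; I would choose $\delta$ small enough that the hypothesis ``codegree at most $\tau D_i$'' holds at every stage, which is fine since deleting edges never increases codegrees while the near-regular degree $D_i$ stays $\Omega(\eps k)$ throughout (hence ``large''). After $R=(1-\Theta(\eps))k$ rounds, the ``spread out'' conclusion together with a concentration/union-bound argument over the rounds gives that $H_R$ has maximum degree $O(\eps k)$, with an implied constant I can make as small as I wish by tuning $\eps'$ and $R$. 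Finally, each edge of $H_R$ meets fewer than $C\cdot O(\eps k)$ other edges, so $H_R$ can be finished greedily with $O(\eps k)$ additional colours; arranging the constants so that the $R$ colours used in the rounds plus the colours of the greedy finish total at most $(1+\eps)k$, the edge set of $H$ is partitioned into at most $(1+\eps)k$ matchings, as required.
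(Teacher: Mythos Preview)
The paper does not prove this theorem at all: it is quoted as a black box from Pippenger and Spencer~\cite{PS}. The only contribution the paper makes is to strip two hypotheses present in the original statement of~\cite{PS} (that the vertex set be large, and that the minimum degree be at least $(1-\delta)k$). It does this in a couple of lines: add isolated vertices to make the vertex set large, and then invoke Lemma~\ref{regular} to embed $H$ in a $k$-regular $C$-uniform hypergraph $H'$ with the same maximum codegree; a decomposition of $H'$ into at most $(1+\eps)k$ matchings restricts to one of $H$.

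Your proposal, by contrast, sketches an actual proof of the Pippenger--Spencer theorem via the R\"odl nibble. That is indeed the standard route and your outline is broadly correct, but it is far more than what the paper does here. If your goal is to match the paper, you should simply cite~\cite{PS} and explain the short reduction removing the two extra hypotheses; your padding-with-dummy-edges step is in the same spirit as the paper's Lemma~\ref{regular}, though the paper's construction is more careful about not raising the codegree. If instead you intend a self-contained proof, be aware that your sketch is a genuine project: the bookkeeping you flag (propagating near-regularity and the codegree bound through the stages, and the concentration arguments) is exactly where the work lies, and would take several pages to carry out rigorously.
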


The original statement published in \cite{PS} had two extra assumptions. First, it was required that the number of vertices of the hypergraph is sufficiently large as a function of $C$ and $\eps$. Second, it was also assumed that every vertex has degree at least $(1-\delta)k$. However, these two conditions are superfluous. One can add any number of isolated vertices to a hypergraph $H$ to satisfy the former condition. Then one can use the following lemma to obtain a $k$-regular hypergraph $H'$ containing $H$ without changing the maximal codegree. Partitioning the edge set of $H'$ into few matchings also partitions the edge set of $H$ into the same number of matchings.

\begin{lemma}
\label{regular}
For any $C\ge2$ and any $C$-uniform hypergraph $H$ with at least one edge and maximum degree at most $k$, there exists a $k$-regular $C$-uniform hypergraph $H'\supseteq H$ with the same maximum codegree as $H$.
\end{lemma}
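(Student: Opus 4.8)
The plan is to assemble $H'$ from many vertex-disjoint copies of $H$ together with a family of new \emph{repair edges} that lift every vertex to degree exactly $k$ without ever joining two vertices lying in the same copy. The reason this arrangement works is that within a single copy no new edge is added, so each pair of vertices inside one copy keeps precisely the codegree it had in $H$; meanwhile each pair of vertices in two different copies starts with codegree $0$, so it can safely absorb a bounded number of repair edges. Write $d$ for the maximum codegree of $H$; since $H$ has at least one edge and $C\ge 2$, that edge contains a pair of vertices, so $d\ge 1$.

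Concretely, I would index the copies by $[r]$ with $r=C^{k}$, viewing $[r]=\{0,1,\dots ,C-1\}^{k}$, take the vertex set of $H'$ to be $V\times [r]$, and let the copy on $V\times\{0^{k}\}$ be $H$ itself so that $H\subseteq H'$. Let $H_0$ be the disjoint union of these $r$ copies of $H$; each vertex $(v,i)$ has degree $d_H(v)$ in $H_0$, hence deficiency $f(v):=k-d_H(v)\in\{0,1,\dots ,k\}$, the upper bound coming from the hypothesis that $H$ has maximum degree at most $k$. It remains to add exactly $f(v)$ new $C$-edges at each copy of $v$.

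The key construction is a list of $k$ partitions $\Pi_1,\dots ,\Pi_k$ of the index set $[r]$, each into blocks of size exactly $C$, with the property that any two distinct indices lie in a common block of at most one $\Pi_\ell$. I would take $\Pi_\ell$ to be the partition of $\{0,\dots ,C-1\}^{k}$ into axis-parallel lines in direction $\ell$, i.e.\ the classes of the relation ``agree in every coordinate except possibly the $\ell$-th''. Each block has size $C$, and if two strings share a block in both $\Pi_\ell$ and $\Pi_{\ell'}$ with $\ell\ne\ell'$ then they agree in every coordinate, hence coincide. The repair edges are then declared to be all sets $\{v\}\times B$ with $v\in V$, $1\le\ell\le f(v)$, and $B\in\Pi_\ell$, and I set $H':=H_0$ together with all repair edges.

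Finally I would verify the three requirements. $C$-uniformity is immediate since $|B|=C$ and $H$ is $C$-uniform. For $k$-regularity, a vertex $(v,i)$ receives $d_H(v)$ edges from $H_0$ and, for each $\ell\le f(v)$, exactly one repair edge, namely $\{v\}\times B$ for the unique $B\in\Pi_\ell$ containing $i$, so its degree is $d_H(v)+f(v)=k$. For the codegree, take two vertices of $H'$: if they lie in the same copy, no repair edge contains both (a repair edge has $C\ge 2$ distinct index-coordinates, so meets each copy in at most one vertex), hence their codegree equals their codegree in $H$, at most $d$; if they are $(v,i)$ and $(v,j)$ with $i\ne j$, then $H_0$ has no edge through both, and a repair edge $\{v'\}\times B$ contains both only if $v'=v$ and $i,j\in B$, which occurs for at most one $\ell$ and hence at most one repair edge, so the codegree is at most $1\le d$; and if the two vertices differ in both coordinates, no edge of $H'$ contains both. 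Thus the maximum codegree of $H'$ is at most $d$, and it is at least $d$ because $H\subseteq H'$, so it equals $d$. The genuinely delicate step — the one I would flag as the main obstacle — is arranging the repair edges so that they do not inflate codegrees across copies; the block-disjoint partitions $\Pi_\ell$ (equivalently, a resolvable packing of $[r]$ into $C$-sets with at least $k$ parallel classes) are exactly what make this work, and the affine-grid construction on $\{0,\dots ,C-1\}^{k}$ supplies them essentially for free; everything else is bookkeeping.
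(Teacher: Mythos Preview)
Your proof is correct, but the construction is genuinely different from the paper's. The paper builds an auxiliary $k$-regular $(C-1)$-uniform \emph{simple} hypergraph $H^*$ (on vertex set $U^k$ with $|U|=C-1$, edges being coordinate lines), then takes $L=|E(H^*)|$ copies of $H$ and $M=\sum_v(k-d_H(v))$ copies of $H^*$, all on disjoint vertex sets, and extends each edge of each $H^*_j$ by a single vertex chosen from some copy $H_i$; the key constraint is that distinct edges of the same $H^*_j$ reach into distinct copies of $H$, which forces any pair of vertices with codegree exceeding $1$ to lie inside a single copy of $H$. Your construction instead uses only copies of $H$, indexed by the grid $\{0,\dots,C-1\}^k$, and adds ``vertical'' repair edges $\{v\}\times B$ that stay at a fixed $v\in V$ across $C$ copies. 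Both arguments ultimately rest on the same combinatorial ingredient---a grid supplies $k$ parallel classes of $C$-lines with pairwise intersection at most one point---but you deploy it on the index set of the copies, whereas the paper deploys it as the vertex set of the auxiliary $H^*$. Your version is more self-contained (no second hypergraph needed) and the codegree verification is slightly cleaner; the paper's version has the mild conceptual advantage of separating ``the copies of $H$'' from ``the gadget that regularizes degrees.''
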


\noindent
{\bf Proof:}\,
A hypergraph is called {\em simple} if its maximum codegree is 1. It is easy to construct a (finite) $k$-regular $(C-1)$-uniform simple hypergraph $H^*=(V^*,E^*)$. For example, let $V^*=U^k$, where $U$ is a set of size $C-1$, and let $E^*$ consist of all $(C-1)$-tuples with $k-1$ fixed coordinates. Let $L$ denote the number of edges in $H^*$.

Let $H=(V,E)$ and $M=\sum_{v\in V}(k-d_H(v))$. Take $M$ isomorphic copies, $H^*_1,\dots,H^*_M,$ of $H^*$ and $L$ isomorphic copies, $H_1,\dots,H_L,$ of $H$ on pairwise disjoint vertex sets, where  $H_1=H$. Construct the hypergraph $H'=(V',E')$ as follows. Let $V'$ be the disjoint union of the vertex sets of all $H_i$ and $H^*_j$. Let $E'$ consist of all the edges of the hypergraphs $H_i$ and the {\em extensions} of the edges of $H^*_j$. Here, an edge of $H^*_j$ is extended with a properly chosen vertex from a hypergraph $H_i$ to obtain a size-$C$ edge of $H'$, making $H'$ $C$-uniform, as required.

We extend different edges in the same copy $H^*_j$ of $H^*$ with vertices from different copies of $H$. This guarantees that if a pair of distinct vertices in $V'$ are contained in more than one edge of $H'$, then they belong to the same $H_i$ and their codegrees in $H'$ and in $H_i$ coincide. Hence, the maximum codegrees in $H$ and $H'$ are the same, as required.

For any $j$, we have to choose a vertex from each copy of $H_i$ to extend one of the edges of $H^*_j$. So, for a fixed copy $H_i$, we have to make $M$ such choices. By the definition of $M$, we can make these choices in such a way that every vertex $v$ in $H_i$ is chosen exactly $k-d_{H_i}(v)$ times. Thus, the degree of $v$ in $H'$ will be precisely $k$. The $H'$-degree of every vertex of $H_j^*$ is also equal to $k$, so $H'$ is $k$-regular.

It follows from the assumption $H_1=H$ that the edge set of $H$ is contained in the edge set of $H'$. This completes the proof of the lemma. \hfill $\Box$
\vspace{0.2cm}

\noindent
{\bf Proof of Theorem \ref{t14}:}\,
Let $N$ be a random necklace with exactly $k$ beads of type $i$ for
each $1 \leq i \leq t$, where $t$ and $k/\log t$ are large.
The total number of
beads is, therefore, $n=kt$.  Fix a small $\eps>0$ and let
$C$ be a large integer.
Let $\delta=\delta(C,\eps)$ satisfy the assertion of Theorem
\ref{t41}. We assume without loss of generality that $\delta<\eps/C$.
Split the necklace into
disjoint intervals, each of length $C$ (with possibly one shorter
interval if $kt$ is not divisible by $C$). This requires fewer than
$n/C$ cuts.
Call an interval {\em bad} if it contains two beads of the same type (or if it is the last interval with fewer than $C$ beads); otherwise call it {\em good}.

Let $H$ be the $C$-uniform hypergraph with $t$ vertices
that represent the $t$ types, and one edge for each good interval consisting of the vertices representing the types of beads in the interval. (Occasionally we might get two good intervals containing beads of the same types, but this represents no problem as we allowed for a multiset for edges.)
The maximum degree in $H$
at most $k$. The proof proceeds by showing that, with high
probability, the maximum codegree in $H$ is at most $\delta k$ and,  thus, Theorem~\ref{t41} applies. One can then take the $k$ largest matchings
and use them to partition almost all beads into $k$ collections without any further cuts,
each containing at most one bead of each type. The remaining few beads
can then be cut loose by a few additional cuts and then they can be placed in the partition classes as required. Since the number of these remaining beads is small, the
total number of cuts is $o(kt)$ as $t$ and $k/\log t$ tend to infinity.
\smallskip

In what follows, we work out the technical details.

Suppose the following two inequalities hold:
\begin{equation}
\label{e41}
t \geq \frac{2C}{\delta},
\end{equation}
\begin{equation}
\label{e42}
t^2e^{-\delta^2k/2} \leq \eps.
\end{equation}
Note that both of these inequalities hold provided that $t$
and $k/ \log t$ are sufficiently large as functions of $C$ and
$\eps$ (which determine $\delta$).

For every pair of distinct types,
$i,j \in [t]$, let $E_{i,j}$ be the event that more than
$\delta k$ edges contain both $i$ and $j$. The probability
of this event can be estimated as
follows. There are at most $k$ intervals containing a bead of type
$i$. When placing the $k$ beads of type $j$ one by one, the
conditional probability for each of them to lie in an interval
containing a bead of type $i$, given any history, is at most
$\frac{(C-1)k}{(t-2)k} <\frac{C}{t}$.
The probability
that there are at least $\delta k$  such beads is, thus, at most the
probability that a binomial random variable with parameters
$C/t$ and $k$ is at least $\delta k$. This can be estimated using (\ref{e41}) and the Chernoff bound (c.f. \cite{AS}, Theorem A.1.4) as
$\mathbb P(E_{i,j})<e^{-\delta^2k/2}$.

Let $E_i$ be the event that among the length $C$ intervals of the necklace $N$, there are at least $\delta k$ which contain at least two beads of type $i$. The probability of this event can be
estimated using the same argument as above.
Indeed, when placing the beads of type $i$ one by one,
every time the probability that it falls into an interval which already contains
a bead of type $i$, is at most $C/t$. If the event $E_i$
occurs, then this happens at least $\delta k$ times, and the
probability of this event is less than $e^{-\delta^2k/2}$.

By (\ref{e42}) and the above estimates, it follows that,
with probability at least $1-\eps$, none of the events
$E_{i,j}$ and $E_i$ holds. Let us assume that this is the case. As none of the events $E_{i,j}$ hold, the maximal codegree in $H$ is below $\delta k$. Therefore, Theorem~\ref{t41} applies and the edges of $H$ can be partitioned into at most $(1+\eps)k$ matchings. We choose the largest $k$ among these matchings (breaking ties arbitrarily) and partition the corresponding good intervals of the necklace into $k$ parts, each containing at most one bead of each
type. We cut each remaining interval (the bad ones and the good ones that do not belong to any of the largest $k$ matchings) into individual beads and distribute these beads appropriately to obtain a fair partition, where each part contains exactly one bead of each type. We used fewer than $n/C$ cuts to obtain the intervals and fewer than $\eps n$ cuts to cut up the good intervals outside the $k$ largest matchings. Finally, we used at most $C-1$ cuts for each bad interval. As none of the events $E_i$ happens, there are fewer than $\delta n+1$ bad intervals. All in all, the number of cuts was smaller than $n/C+\eps n+\delta Cn\le n/C+2\eps n$ (where we used the assumption that $\delta\le\eps/C$). As $\eps>0$
can be chosen arbitrarily small, $C$ arbitrarily
large, and the required conditions hold with probability
at least $1-\eps$ provided that $t$ and $k/\log t$ are sufficiently large
depending on $C$ and $\eps$, the theorem is true. \hfill $\Box$
\vspace{0.2cm}

\noindent
{\bf Proof of Theorem \ref{t15}:}\,
The lower bound  for $X=X(2,t,1)$ is proved by a simple first
moment argument. Let $s$ be the number of cuts allowed, and let
$Y$ be the random variable counting the number of fair
balanced partitions of the necklace using $s (\leq t)$ cuts.
For each fixed
balanced partition, the probability that it is fair is
$$
(t!)^2 \cdot 2^t/(2t)!=\Theta(\frac{\sqrt t}{2^t}).
$$
The number of 2-partitions requiring exactly $s$ cuts is exactly
${{2t-1}  \choose s}$. The number of balanced partitions that can be obtained by at most $s$ cuts is therefore at most $\sum_{i=0}^s{2t-1\choose i} \leq 2^{2H(s/(2t))t}$, where
$H(x)$ is the binary entropy function. Therefore, the expected
number of fair partitions with $s$ cuts is at most
$$
2^{2H(s/(2t))t} \Theta\left(\frac{\sqrt t}{2^t}\right).
$$
For any positive $\eps$, large $t>t_0(\eps)$, and
$s$ smaller than $(2H^{-1}(1/2)-\eps)t$, the last expression is smaller than $\eps$.
This implies that whp $X(2,t,1)$ is at least
$H^{-1}(1/2) 2t -o(t) =0.22...t -o(t)$.
\smallskip

The proof of the upper bound,
obtained jointly with Ryan Alweiss, Colin Defant
and Noah Kravitz, follows.  Let $f(t)$ denote the expectation
of $X(2,t,1)$.
\begin{claim}
\label{c2t1}
$$
f(t+1) \leq f(t) + \frac{1}{2} -
\frac{1}{2} \frac{f(t)}{2t+1}+\frac{1}{2t+2}.
$$
\end{claim}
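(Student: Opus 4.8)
The plan is to couple a random necklace on $2(t+1)$ beads (two beads of each of $t+1$ types) with a random necklace on $2t$ beads by deleting the two beads of type $t+1$, and to show that adding back these two beads increases the optimal number of cuts by at most the claimed amount in expectation. Concretely, start with an optimal fair partition of the $2t$-bead necklace using $X(2,t,1)$ cuts, and analyze how a fair partition of the larger necklace can be obtained from it.

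First I would set up the coupling carefully. A uniform random necklace $N'$ on $2(t+1)$ beads is obtained by taking a uniform random necklace $N$ on $2t$ beads and inserting the two type-$(t+1)$ beads into two of the $2t+1$ gaps, chosen uniformly at random (with the two gaps possibly equal, meaning the beads are adjacent — one must check the counting of gap-pairs matches the uniform distribution on $N'$; this is a routine multiplicity computation). Fix an optimal fair partition $\Pi$ of $N$ with $X=X(2,t,1)$ cuts, so the intervals of $\Pi$ alternate between thief $A$ and thief $B$, and each thief gets one bead of every type $1,\dots,t$. Exactly one thief, say $A$, has $\lceil (X+1)/2\rceil$ intervals and the other has $\lfloor (X+1)/2\rfloor$; by symmetry we may condition on which thief is "larger." Now insert the two new beads. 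We need one new bead of type $t+1$ to go to $A$ and one to $B$.

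The key case analysis is according to where the inserted beads land relative to the cut structure of $\Pi$. If one inserted bead lands inside an interval belonging to $A$ and the other inside an interval belonging to $B$, then $\Pi$ is \emph{already} a fair partition of $N'$ with the same number of cuts $X$ — no new cuts needed. If both inserted beads land in intervals belonging to the same thief, or one or both land in a gap that is itself a cut point of $\Pi$, then we fix this up by making a bounded number of extra cuts: e.g. cut out the offending bead as a singleton interval and hand it to the other thief, which costs at most two additional cuts per misplaced bead, but careful bookkeeping (merging with adjacent cut points, using the freedom at the two ends of the open necklace) should bring the worst case down to at most one extra cut per bead, i.e. at most $2$ extra cuts, and in the "good" case $0$ extra cuts. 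The probability of the good case is governed by the interval-length profile of $\Pi$: if $A$'s intervals have total length $t$ (they do, since $A$ gets $t$ old beads) occupying a fraction roughly $1/2$ of the $2t+1$ gaps, the chance both beads "split correctly" is about $1/2$, and one extracts the bound $f(t+1)\le f(t)+1/2+(\text{correction})$. The terms $-\tfrac12\cdot\tfrac{f(t)}{2t+1}$ and $+\tfrac1{2t+2}$ come from quantifying, respectively, a gain: with more cuts in $\Pi$ there are more "boundary" gaps where an insertion is automatically favorable (roughly $X$ of the $2t+1$ gaps are adjacent to a cut), so the expected correction improves proportionally to $f(t)/(2t+1)$; and a small loss from edge effects at the two endpoints of the open necklace. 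I would make this precise by writing $X(2,t+1,1)\le X(2,t,1)+\xi$ where $\xi\in\{0,1,2\}$ is determined by the insertion positions and the structure of $\Pi$, computing $\mathbb E[\xi\mid N,\Pi]$ exactly as a function of $X(N)$ and the number of gaps of each "type," and then taking expectations.

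The main obstacle I expect is the precise accounting in the "bad" cases — showing the repair can always be done with at most one extra cut rather than two (so that the leading coefficient is $1/2$ and not $1$), and correctly identifying which gaps are "favorable" so that the gain term is exactly $\tfrac12\cdot\tfrac{f(t)}{2t+1}$. This requires exploiting the open (not cyclic) structure of the necklace — the two endpoints give extra flexibility — and carefully handling the case where an inserted bead lands exactly at an existing cut, as well as the case where both beads land in the same gap. The probabilistic part (translating position-insertion probabilities into the stated inequality) is then a routine, if slightly tedious, computation with the hypergeometric/uniform gap distribution, and conditioning on which thief is the "larger" one in $\Pi$ to handle the parity of $X$.
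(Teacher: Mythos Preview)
Your coupling is a natural first thought, but it does not yield the stated inequality, and the place where it fails is exactly the obstacle you flagged but did not resolve: the repair in the bad case costs two cuts, not one. If both inserted type-$(t+1)$ beads fall in the interior of intervals belonging to the same thief, the only way to hand one of them to the other thief while preserving fairness for types $1,\dots,t$ is to cut it out as a singleton, which requires two new cuts (one on each side). There is no one-cut fix in general: adding a single cut and flipping the alternation past it would move other beads between thieves and destroy fairness. Since each thief owns roughly half of the $2t+1-f$ non-cut gaps, the probability that both beads land in the same thief's interior is approximately $\tfrac12$, so your random variable $\xi$ has $\mathbb E[\xi]\approx 1$, not $\tfrac12$. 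The recursion you would actually obtain is (up to lower-order terms) $f(t+1)\le f(t)+1-f(t)/t$, which only gives $\limsup f(t)/t\le\tfrac12$, not the claimed inequality and not the $0.4$ bound.

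The missing idea is a different, asymmetric coupling: by symmetry of the uniform necklace, one may assume the \emph{last} bead has type $t+1$. Remove it together with the other type-$(t+1)$ bead; the remaining $2t$ beads form a uniform necklace on $t$ types, and one takes an optimal fair partition of it with $f$ cuts. Now only \emph{one} bead (the second type-$(t+1)$ bead) is reinserted at a uniform position among the $2t+1$ gaps. If it lands at one of the $f$ existing cuts, or in an interval of the thief who does not own the final interval, no new cut is needed; otherwise a single cut just before the last bead suffices, because that bead sits at the end of the necklace. This is what produces the leading $\tfrac12$, the gain term $-\tfrac12\cdot\tfrac{f(t)}{2t+1}$ (from the $f$ cut gaps), and the small loss $\tfrac1{2t+2}$ (from the parity-dependent count of how many beads the ``other'' thief owns among the first $2t+1$). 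Your two-bead insertion cannot reproduce this, because neither inserted bead is forced to be at an endpoint.
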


\noindent
{\bf Proof:}\,
Expose the random necklace with $t+1$ types and $2$
beads of each type
as follows: first expose the
last bead, without loss of generality let its type be $t+1$.
Then expose the necklace of the
$2t$ beads of
types $1, 2,\ldots ,t$ (ignoring the second bead of type $t+1$).
This is uniform random
hence in expectation there is a collection of $f(t)$ cuts of this necklace
of $t$ types that gives a fair partition.
Fix one such minimum collection of cuts.
Let their number be $f$, note that $f$ is a random variable
whose expectation is $f(t)$.
So far we have the relative order of $2t+1$ beads. Now
expose the other occurrence of the bead of type $t+1$, which we call
here the
extra bead: it clearly lies
in a uniform random place among $2t+1$ options which are the
$2t+1$ spaces before bead number $i$ for some $i \leq 2t+1$.
If this place happens to be one of the existing
$f$ cuts then no additional cut is needed, as we can append this
extra bead to an interval of each of the two thieves, as needed, without
adding any new cut. This happens with probability $f/(2t+1)$. If this
is not the case, then with probability roughly a half (computed
precisely in what follows)
the extra bead is placed in
an interval that goes to the thief who is not the one to get the last
interval. Note that in this case we do not need an extra cut either.
The probability that this occurs is
exactly $(t-f/2)/(2t+1-f)$ for even $f$ and
$(t-(f-1)/2)/(2t+1-f)$ for odd $f$. This is because the total number
of beads that this thief gets from among the $2t+1$ beads above is $t$, and
among the spaces just before them, there are $f/2$ which are spaces
among the $f$ cuts if $f$ is even, and $(f-1)/2$ if $f$ is odd.
(In the odd case, each thief gets $(f+1)/2$ intervals,
but the space before the first interval is not a cut). The above
ratio is, thus, exactly $1/2$ for odd $f$ and for even $f$ it satisfies
$$
\frac{1}{2} \frac{2t-f}{2t+1-f} = \frac{1}{2}(1- \frac{1}{2t+1-f})
\geq \frac{1}{2} (1-\frac{1}{t+1}).
$$
Here, we used the fact that $f \leq t$ always holds, by the deterministic
result.
Therefore, the probability of the event above is at least
$\frac{1}{2}-\frac{1}{2t+2}$.
In the only remaining case, which happens with probability at most
$$
(\frac{1}{2}+\frac{1}{2t+2})(1-\frac{f(t)}{2t+1})
\leq \frac{1}{2} - \frac{1}{2} \frac{f(t)}{2t+1}+\frac{1}{2t+2},
$$
we need at most one additional cut: just
before the very last bead (of type $t+1$).  This proves the claim.
\hfill $\Box$
\smallskip

We next show that, by Claim~\ref{c2t1}, $\lim \sup f(t)/t \leq 0.4$.
Indeed, if for some value of $t$, $\frac{f(t+1)}{t+1}
\geq \frac{f(t)}{t}$, then by the claim
$$
\frac{f(t)}{t} \leq \frac{f(t+1)}{t+1} \leq
\frac{f(t)}{t+1} -\frac{1}{2}\frac{f(t)}{(2t+1)(t+1)} +
\frac{1}{2(t+1)}+\frac{1}{2(t+1)^2}.
$$
This gives
$$
\frac{f(t)}{t} \frac{5t+2}{2t+1} \leq 1+\frac{1}{t+1},
$$
implying that in this case $\frac{f(t)}{t} \leq 0.4+O(1/t).$
However, in this case, by the above inequality
(or by the simple fact that $f(t+1) \leq f(t)+1$), we also have
$f(t+1)/(t+1) \leq 0.4 +O(1/t).$

Hence, if there are infinitely many values of $t$
for which $f(t+1)/(t+1) > f(t)/t$ holds, then $\lim \sup f(t)/t \leq 0.4$.
Otherwise, the function $f(t)/t$ is eventually decreasing, so $x=\lim f(t)/t$ exists. In this case Claim~\ref{c2t1} implies
$f(t+1)\le f(t)+1/2-x/4+o(1)$. Summing this for all values $t<t_0$ we get $f(t_0)\le(1/2-x/4)t_0+o(t_0)$ or, equivalently,
$f(t_0)/t_0\le 1/2+x/4 +o(1)$. Taking limits, we obtain $x\le1/2-x/4$, yielding $x\le0.4$ as needed.

We have thus shown that $E(X(2,t,1))\le 0.4t+o(t)$.
We can conclude, by the Azuma-Hoeffding Inequality
(see, e.g., \cite{AS}), that $X(2,t,1)\le 0.4t+o(t)$ holds whp.
Indeed,  by this inequality and the fact that the minimum
number of cuts for a fair partition can change by at most $O(1)$ when
swapping two beads, it follows that the probability that $X(2,t,1)$
deviates from its expectation by at least $C\sqrt t$ is at most
$e^{-\Omega(C^2)}$.  This implies that
$X(2,t,1)\le 0.4t+o(t)$ whp, completing
the proof of Theorem
\ref{t15}.  \hfill $\Box$
\vspace{0.2cm}

\noindent
{\bf Remark.}\, It is possible to slightly improve both estimates
in Theorem~\ref{t15}. The upper bound can be improved by
observing that in the argument above, if the extra bead appears
in an interval forcing us to add a cut, and it happens to appear
between two beads of types $i$ and $j$ where there is a cut just
before the type $i$ bead, then, if the second bead of type $i$ also
appears right after (or before) a cut, it is possible to shift
these two cuts and get a fair partition without increasing the total
number of cuts. There are several similar local scenarios that
can be used in a similar way and lead to small improvements in the
upper bound. Optimizing these arguments can be difficult, but they
do show that the expectation is smaller than $0.39 t +o(t)$ whp.

The lower bound can also be slightly improved as follows.
If one has a fair partition with at most $s$ cuts, then one can
choose that to be minimal: first, minimize the number of cuts, then
try to make the cuts as far to the right as possible.
Now the first beads after the cuts have all distinct types, as
otherwise two of the cuts could be shifted one position to the right (and if
they reach the next cut, then they would cancel). This gives a tiny
advantage in the probability (probability of a set of cuts being
fair
versus probability of being the minimal fair set). This gives a
lower
bound of roughly $0.227 \cdot t$,
a tiny improvement over $0.22 \cdot t$.
These considerations still leave a substantial gap between the
upper and lower bounds.

\section{Concluding remarks and open problems}\label{sectionConcl}

We have studied the minimum possible number of cuts required to
partition a random necklace with $km$ beads of each of $t$ types
fairly into $k$ collections. This
minimum is denoted by $X=X(k,t,m)$.
A better understanding of the behavior of the random variable $X$
for all admissible values of the parameters requires further study.
\begin{itemize}
\item
Some of the arguments described here
for $k=2$, fixed $t$ and large $m$ can be extended to higher
values of $k$. In particular, we note that the argument in the
proof of the upper
bound for the probability that $X(2,3,m)=O(1/\log m)$ implies that
for every fixed $k >2$ and $t=3$, the probability that
$X(k,3,m)=\frac{(k-1)(t+1)}{2}=2k-2$ is also $O(1/\log m)$.
Indeed, $2k-2$ cuts split the necklace into $2k-1$ intervals,
hence there is at least one thief who gets a single interval. This
interval has to contain exactly $m$ beads of each of the three
types. The probability of the existence of such an interval is
$O(1/\log m)$, by the argument in the proof presented in Subsection~2.1.
\item
We have seen that the
ratio $X(2,t,1)/t$ is between $0.22$ and $0.4+o(1)$ whp. Is the
ratio $c+o(1)$ whp, for some constant $c$, and if so,
what is the value of $c$?
\item
The algorithmic problem of finding a small number of cuts
yielding a fair partition efficiently for a given input random
necklace is also interesting. For the deterministic case there are
known hardness results for the problem (see \cite{FG}) and known
approximation algorithms (\cite{AG}), and it will be interesting to find
efficient algorithms that  work better whp for the random case.
\item
The random variable $X(2,t,m)$ has the following interpretation in
terms of a question about folding positive random walks in $Z^t$.
Consider a random walk of $2tm$ steps in $Z^t$. Starting from the
origin, every step is one of the $t$ unit vectors $e_i$, where the
sequence of steps is a random sequence consisting of exactly $2m$
steps in each direction $e_i$. An elementary folding at $j$ of this
sequence of steps switches the signs of all steps from step number
$j$ until the end. We can apply several of these elementary foldings one after the other. The random variable
$X(2,t,m)$ is thus the minimum number of elementary foldings required to
ensure the folded sequence ends at the origin. A similar question,
avoiding parity issues,
can be considered for a sequence of $n$ random, independent,
positive steps in $Z^t$. The random variable now is the minimum
number of elementary foldings required to ensure the folded walk
ends within $\ell_{\infty}$-distance $1$ from the origin.
\end{itemize}

\noindent
{\bf Acknowledgment}.
We thank Yuval Peres for providing helpful references, and thank
Ryan Alweiss, Colin Defant and Noah Kravitz for their help in the
proof of the upper bound in Theorem \ref{t15}.


\begin{thebibliography}{99}

\bibitem{Al}
N. Alon, Splitting necklaces, Advances in Mathematics 63 (1987),
247--253.

\bibitem{AG}
N. Alon and A. Graur,
Efficient splitting of necklaces,
Proc. ICALP 2021, Article No. 14, pp. 14:1--14:17.
See also: arXiv:2006.16613.

\bibitem{AS}
N. Alon and J. H. Spencer, The Probabilistic Method,
Fourth edition,
Wiley, 2016, xiv+375 pp.

\bibitem{CE}
K. L. Chung and P. Erd\H{o}s,
On the application of the
Borel-Cantelli lemma, Transactions of the American
Mathematical Society 72 (1) (1952), 179--186.

\bibitem{DE}
A. Dvoretzky and P. Erd\H{o}s,
Some problems on random walk in
space, In Proc. 2nd Berkeley Symp, 1951, pp. 353--367.

\bibitem{erdos taylor2}
P. Erd\H{o}s and S. J. Taylor, Some intersection
properties of random walk paths, Acta Math. Acad. Sci.
Hungar. 11 (1960), 231--248.

\bibitem{ET}
P. Erd\H{o}s and S. J. Taylor,
Some problems concerning
the structure of random walk paths,
Acta Mathematica Academiae
Scientiarum Hungarica 11, no. 1-2 (1963), 137--162.

\bibitem{FG}
%Aris Filos-Ratsikas and Paul~W. Goldberg.
A. Filos-Ratsikas and P. W. Goldberg,
The Complexity of Splitting Necklaces and Bisecting Ham
Sandwiches.
Proceedings of the 51st Annual ACM Symposium on Theory of
Computing (STOC), pages 638--649, 2019.

\bibitem{Hoe}
W. Hoeffding, Probability inequalities for sums of bounded random
variables, J. Amer. Stat. Assoc. 58 (1963), 13--30.

\bibitem{JLR}
S. Janson, \L uczak and A. Ruci\'nski,
Random graphs,
Wiley-Interscience Series in Discrete Mathematics and Optimization.
Wiley-Interscience, New York, 2000. xii+333 pp.

\bibitem{Lawler1}
G. F. Lawler, The probability of intersection of independent
random walks in four dimensions, Communications in Mathematical
Physics 86 (4) (1982), 539--554.

\bibitem{Lawler2}
G. F. Lawler,  Intersections of random walks in
four dimensions. II, Communications in
mathematical physics 97 (4) (1985), 583--594.

\bibitem{PS} N. Pippenger and J. Spencer,
Asymptotic behavior of the chromatic index for hypergraphs,
J. Combin. Theory Ser. A 51 (1989), 24--42.

\bibitem{PZ} R. E. A. C. Paley and A. Zygmund, On some
series of functions, (3), Mathematical Proceedings of the
Cambridge Philosophical Society. 28 (2) (1932), 190--205.

\bibitem{PZ1} R. E. A. C. Paley and A. Zygmund,
A note on analytic functions in the unit circle, Mathematical
Proceedings of the Cambridge Philosophical Society. 28 (3) (1932),
266--272.

\bibitem{polya}
G. P\'olya, \"Uber eine Aufgabe der Wahrscheinlichkeitsrechnung
betreffend die Irrfahrt im Stra\ss ennetz,
Mathematische Annalen 84 (1) (1921), 149--160.

\bibitem{Berry esseen}
M. Rai\v{c}, A multivariate Berry-Esseen theorem with explicit constants,
Bernoulli 25 (4A) (2019), 2824--2853.

\bibitem{Re} P. R\'ev\'esz, Random Walks in Random and Non-Random
Environments,
Third edition,
World Scientific Publishing Co. Pte. Ltd., Hackensack, NJ, 2013.
xviii+402 pp.

\bibitem{LLT}
B. Trojan, Long time behavior of random walks on
the integer lattice, Monatshefte f\"ur Mathematik 191 (2) (2020), 349--376.

\end{thebibliography}
\end{document}